%
%

\documentclass[12pt]{amsart}
\usepackage{amsmath,amsthm,amssymb}
\usepackage{graphicx}
\usepackage[margin=1in]{geometry}
\usepackage
[bookmarks,colorlinks,linkcolor=blue,citecolor=blue,urlcolor=blue]
{hyperref}
\begin{document}


\def\s{\Sym}
\def\w{\mathcal W}
\def\m{\mathcal M}
\def\P{\mathcal P}
\def\y{\mathcal Y}
\def\q{\mathcal Q}
\def\ssym{\mathfrak SSym}
\def\msym{\mathcal MSym}
\def\wsym{\mathcal WSym}
\def\psym{\mathcal PSym}
\def\ysym{\mathcal YSym}
\def\qsym{\mathcal QSym}
\def\dsym{\Delta Sym}
\def\Sym{\mathfrak S}
\def\bs{\boldsymbol}

\newcommand{\PGL} {\Pj\Gl_2(\R)}                           
\newcommand{\PGLC} {\Pj\Gl_2(\C)}                          
\newcommand{\RP} {\R\Pj^1}                                 
\newcommand{\CP} {\C\Pj^1}                                 
\newcommand{\C} {{\mathbb C}}                              
\newcommand{\R} {{\mathbb R}}                              
\newcommand{\Q} {{\mathbb Q}}                              
\newcommand{\Z} {{\mathbb Z}}                              
\newcommand{\Pj} {{\mathbb P}}                             
\newcommand{\Sg} {{\mathbb S}}                             
\newcommand{\Gl} {{\rm Gl}}                                

\newcommand{\suchthat} {\:\: | \:\:}
\newcommand{\ore} {\ \ {\it or} \ \ }
\newcommand{\oand} {\ \ {\it and} \ \ }

\newcommand{\oM} [1] {\ensuremath{{\mathcal M}_{0,#1}(\R)}}                 
\newcommand{\M} [1] {\ensuremath{{\overline{\mathcal M}}{_{0, #1}(\R)}}}    
\newcommand{\cM} [1] {\ensuremath{{\mathcal M}_{0,#1}}}                     
\newcommand{\CM} [1] {\ensuremath{{\overline{\mathcal M}}{_{0, #1}}}}       

\newcommand{\Tubeset} {\mathfrak{T}}                                        

\newcommand{\D} {\Delta}
\newcommand{\K} {\mathcal{K}}                           
\newcommand{\J} {\mathcal{J}}

\newcommand{\sm}{\varepsilon}

\newcommand{\rec}[1] {G^*(#1)}                  

\newcommand{\KG} {{\K} G}
\newcommand{\JG} {{\J} G}                

\newcommand{\temp} {\nabla}
\newcommand{\JGr} {\JG_r}
\newcommand{\JGd} {\JG_d}

%
%
\theoremstyle{plain}
\newtheorem{thm}{Theorem}
\newtheorem{prop}[thm]{Proposition}
\newtheorem{lem}[thm]{Lemma}
\newtheorem{cor}[thm]{Corollary}
\newtheorem{ques}[thm]{Question}
\newtheorem{conj}[thm]{Conjecture}
\newtheorem{defn}[thm]{Definition}
\newtheorem{exmp}[thm]{Example}
\newtheorem{rem}[thm]{Remark}
\newenvironment{defi}[1][]{\rm\begin{defn}[#1]\rm}{\end{defn}}
\newenvironment{exam}[1][]{\rm\begin{exmp}[#1]\rm}{\end{exmp}}
\newenvironment{rema}[1][]{\rm\begin{rem}[#1]\rm}{\end{rem}}
\numberwithin{thm}{section}

\addtolength{\textheight}{48pt}

%
%

\title {Geometric combinatorial algebras: cyclohedron and simplex}

\subjclass[2000]{Primary 52B11}

\author{Stefan Forcey}
\address{S.\ Forcey: Tennessee State University, Nashville, TN 37209}
\email{sforcey@tnstate.edu}

\author{Derriell Springfield}

\begin{abstract}
In this paper we report on results of our investigation into the
algebraic structure supported by the combinatorial geometry of the
cyclohedron.
 Our new graded algebra structures
lie between two well known Hopf algebras: the Malvenuto-Reutenauer
algebra of permutations and the Loday-Ronco algebra of binary trees.
 Connecting algebra
maps arise from a new generalization of the Tonks
 projection from the permutohedron
to the associahedron, which we discover via the viewpoint of the
graph associahedra of Carr and Devadoss. At the same time, that
viewpoint allows exciting geometrical insights into the
multiplicative structure of the algebras involved. Extending the
Tonks projection also reveals a new graded algebra structure on the
simplices. Finally this latter is extended to a new graded Hopf
algebra (one-sided) with basis all the faces of the simplices.
\end{abstract}

\keywords{Hopf algebra, graph associahedron, cyclohedron, graded algebra}

\maketitle


\baselineskip=15pt

%
%
\section{Introduction}
\subsection{Background: Polytope algebras} In 1998 Loday and Ronco found an intriguing Hopf algebra of planar binary
trees lying between the Malvenuto-Reutenauer Hopf algebra of
 permutations \cite{MR} and the Solomon descent algebra of Boolean subsets \cite{LR}.
They also described natural Hopf algebra maps which neatly factor the descent map from permutations to Boolean subsets.
Their first factor turns out to be the restriction (to vertices) of the Tonks projection from the
 permutohedron to the associahedron. Chapoton made sense of this latter fact when he
found larger Hopf algebras based on the faces of the respective polytopes \cite{chap}. Here we study several
new algebraic structures based on polytope sequences, including the cyclohedra, $\w_n,$ and the simplices,
$\Delta^n$. In Figure~\ref{f:cyc1} we show the central polytopes, in three dimensions.

\begin{figure}[h]
\includegraphics[width=\textwidth]{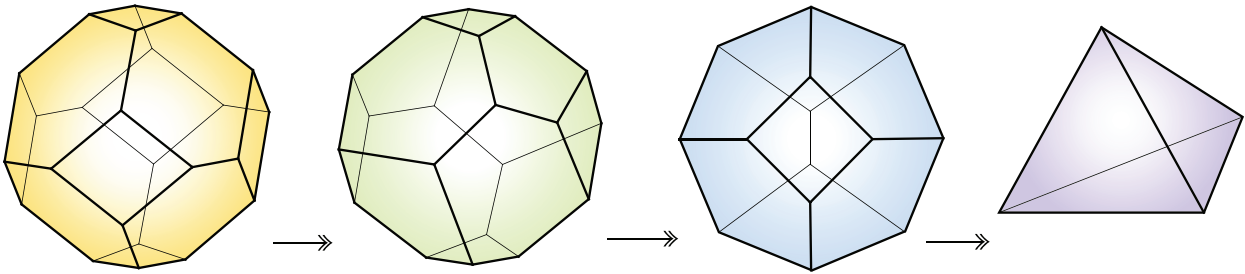}
\caption{The main characters, left to right: $\P_4, \w_4,$ $\K_4,$ and $\Delta^3.$ } \label{f:cyc1}
\end{figure}

We will be referring to the algebras and maps discussed
in \cite{AS-LR} and \cite{AS-MR}. In these two papers, Aguiar and Sottile make powerful
 use of the weak order on the symmetric groups and the Tamari order on binary trees. By leveraging the M\"obius
function of these two lattices they provide clear descriptions of
the antipodes and of the geometric underpinnings of the Hopf
algebras. The also demonstrate cofreeness, characterize primitives
of the coalgebras, and, in the case of binary trees, demonstrate
equivalence to the non-commutative Connes-Kreimer Hopf algebra from
renormalization theory.

Here we generalize the well known algebras based on
  associahedra and permutohedra to new ones on cyclohedra and
simplices. The cyclohedra underlie graded algebras, and the
simplices underlie  a new (one-sided) Hopf algebra. We leave for
future investigation many potential algebras and coalgebras based on
novel sequences of graph associahedra. The phenomenon of polytopes
underlying Hopf structure may be rare, but algebras and coalgebras
based on polytope faces are beginning to seem ubiquitous.
 There does exist a larger family of Hopf algebras to be uncovered in the structure
of the polytope sequences derived from trees, including the multiplihedra, their quotients and their covers.
These are studied in \cite{newFLS}.

\subsection{Background: Cyclohedra} The cyclohedron $\mathcal{W}_n$ of dimension $n-1$ for $n$ a positive
integer was originally
 developed  by Bott and Taubes \cite{bott},
and received its name from Stasheff. The name points out the close
connection to Stasheff's associahedra, which we denote $\K_n$ of
dimension $n-1.$ The former authors
described the facets of $\mathcal{W}_n$ as being indexed by subsets
of $[n] = {1,2,\dots,n}$ of
 cardinality $\ge 2$ in cyclic
order. Thus there are $n(n-1)$ facets. All the faces can be indexed
by cyclic bracketings of the string $123\dots n$, where the facets
have exactly one pair of brackets. The vertices are complete
bracketings, enumerated by ${2(n-1) \choose n-1}.$

\begin{figure}[h]
\includegraphics[width=\textwidth]{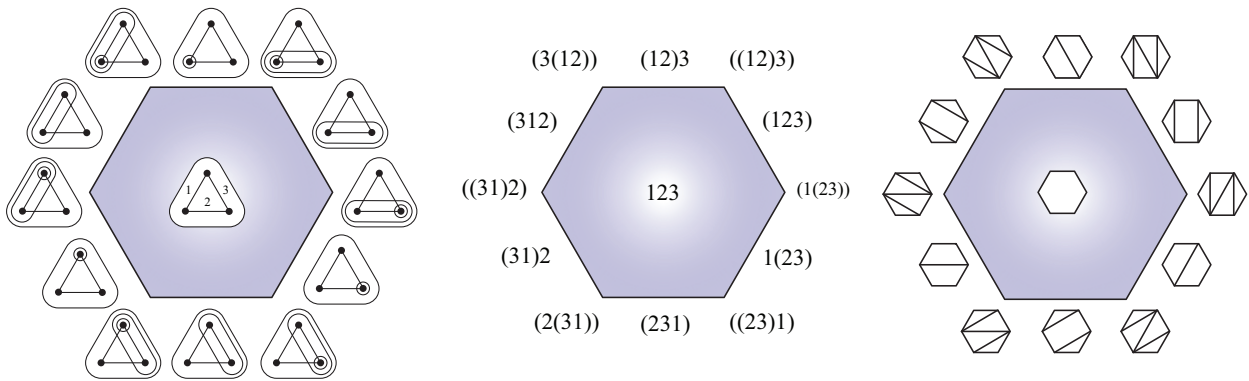}
\caption{The cyclohedron $\w_3$ with various indexing.}
\label{f:indexing}
\end{figure}

 The space $\mathcal{W}_n \times S^1$, seen as the
  compactification of the
configuration space of $n$ distinct points in $\R^3$ which are
constrained to lie upon a given knot, is used to define new
invariants which reflect the self linking of knots \cite{bott}.
Since their inception the cyclohedra have proven to be useful in
many other arenas. They provide an excellent example of a right
operad module
 (over the operad of associahedra) as shown in \cite{markl}. Devadoss discovered a
 tiling of the $(n-1)$-torus by $(n-1)!$ copies
 of $\mathcal{W}_n$ in  \cite{devspace}.
 Recently the cyclohedra have
  been used to look for the statistical signature of periodically
 expressed genes in the study of biological clocks \cite{morton}.

The faces of the cyclohedra may also be indexed by the centrally
symmetric subdivisions of a convex polygon of $2n$ sides, as
discovered by Simion \cite{simion}. In this indexing the vertices
are centrally symmetric triangulations, which allowed Hohlweg and
Lange to develop geometric realizations of the cyclohedra as convex
hulls \cite{hohl_lang}. This picture is related to the work of
 Fomin, Reading and Zelevinsky, who see the cyclohedra
 as a generalization of the associahedra corresponding to the $B_n$ Coxeter diagrams \cite{reading-camb}.
  From their perspective the face structure of the cyclohedron is
 determined by
 the sub-cluster structure of the generators of a finite cluster algebra.

In contrast, for Devadoss the cyclohedra arise from truncating simplex faces
 corresponding to sub-diagrams of the $\tilde{A}_n$ Coxeter diagram, or cycle graph \cite{dev-carr}.
     In this paper we will work from the point
 of view taken by Devadoss and consider the faces as indexed by tubings of the cycle graph on $n$ vertices.
   Given a graph $G$, the
  \emph{graph associahedron} $\KG$ is a convex polytope generalizing the associahedron,
   with a face poset based on the full connected subgraphs, or tubes of $G$.  For instance,
   when $G$ is a path, a cycle, or a complete graph, $\KG$ results in the associahedron,
   cyclohedron, and permutohedron, respectively.  In \cite{dev-real}, a geometric realization
    of $\KG$ is given, constructing this polytope from truncations of the simplex.  In \cite{dev-carr} the motivation
for the development of $\KG$ is that it appears in tilings of minimal blow-ups of certain Coxeter complexes,
      which themselves are natural generalizations of the moduli spaces \M{n}.

The value of the graph associahedron to  algebraists, as we hope to
demonstrate here, is twofold. First, a unified description of so
many combinatorial polytopes allows useful generalizations of the
known algebraic structures on familiar
 polytope sequences.
 Second, the recursive structure
 described by Carr and
Devadoss in a general way for graph associahedra
 turns out to lend new geometrical meaning to the graded algebra structures of both the Malvenuto-Reutenauer and Loday-Ronco
algebras.  The product of two vertices, from terms $P_i$ and $P_j$ of
 a given sequence of polytopes $\{P_n\}_{n=0}^{\infty}$, is described as a sum of vertices of the term $P_{i+j}$ to which the operands are
 mapped.  The
summed vertices in the product
 are the images of classical
inclusion maps composed with our new extensions of the Tonks
projection.
\subsubsection{Acknowledgements}
We would like to thank the referees for helping us make connections to other work, and for making some
excellent suggestions about presentation of the main ideas. We also thank the following for tutorials and
helpful conversations on the subject matter: Satyan Devadoss, Aaron Lauve, Maria Ronco and Frank Sottile.

\subsection{Summary}
\subsubsection{Notation}
The Hopf algebras of permutations, binary trees, and Boolean subsets
are denoted respectively $\ssym, \ysym $ and $\qsym,$ as in
\cite{AS-LR} and \cite{AS-MR}.  Note that some of our sources,
including Loday and Ronco's original treatment of binary trees,
actually deal with the dual graded algebras. The larger algebras of
faces of permutohedra, associahedra and cubes are denoted
$\tilde{\ssym}, \tilde{\ysym} $ and $\tilde{\qsym}.$The new algebras
of cyclohedra vertices and faces are denoted $\wsym$ and
$\tilde{\wsym}.$ The new algebra of vertices of the simplices is
denoted $\dsym.$ Finally the new one-sided Hopf algebra of faces of
the simplices is denoted $\tilde{\dsym}.$ Throughout there are three
important maps we will use. First we see them as polytope maps:
$\hat{\rho}$ is the inclusion of facets defined by Devadoss as a
generalization of the recursive definitions of associahedra,
simplices and permutohedra; $\eta$ is  the projection from the
polytope of a disconnected graph to its components; and $\Theta$ is
the generalization of Tonks's projection from the permutohedron to
the associahedron. This last gives rise to maps $\hat{\Theta}$ which
are algebra homomorphisms.

\subsubsection{Main Results} In Theorem~\ref{t:biggy} we demonstrate
that $\wsym$ is an associative graded algebra. In
Theorem~\ref{t:wface} we extend this structure to the full poset of
faces to describe the associative graded algebra $\tilde{\wsym}.$ In
Theorem~\ref{t:sim} we demonstrate an associative  graded algebra
structure on $\dsym.$ In Theorem~\ref{t:hopf} we show how to extend
this structure to become a new graded (one-sided) Hopf algebra
$\tilde{\dsym},$ based upon all the faces of the simplices. Thus its
graded dimension is $2^n$.

Theorems~\ref{t:geo_pro}, \ref{r:geo} and~\ref{t:geochap} and Remark~\ref{r:geo_cyc} point out that the
multiplications in all the algebras studied here can be understood in a unified way
 via the recursive structure of the
face posets of associated polytopes. To summarize, the products
 can be seen as a process of projecting and including of faces, via $\Theta,\hat{\rho}$ and $\eta$.
The algebras based on the permutohedron don't use $\Theta$ or $\eta,$ and the algebras based on the simplices
don't use $\Theta,$ but otherwise the definitions will seem quite repetitive. We treat each definition
separately to highlight those differences, but often the proofs of an early version will be referenced rather
than repeated verbatim.
 The coproducts of $\ysym$ and
$\ssym$ are also understandable as projections of the polytope geometry, as
mentioned in Remarks~\ref{r:geo_cop} and~\ref{r:y_cop}.

Before discussing algebraic structures,
however, we build a geometric combinatorial foundation. We show precisely how the graph associahedra are all
cellular quotients of the permutohedra (Lemma~\ref{l:newtonks}), and how the associahedron is a quotient of
any given connected graph associahedron (Theorem~\ref{t:ftonks}). Results similar to these latter statements
are implied by the work of Postnikov in \cite{post}, and were also reportedly known to Tonks in the cases
involving the cyclohedron \cite{dev-carr}.

The various maps arising from factors of the Tonks projection are shown to be algebra homomorphisms in
Theorems~\ref{t:permhom}, ~\ref{t:whom} and Lemma~\ref{l:next}. Corollary~\ref{c:cool} points out the
implication of  module structures on $\wsym$ over $\ssym$; and on $\ysym$ over $\wsym.$

\subsubsection{Overview of subsequent sections}
 Section~\ref{s:defns} describes the posets of connected subgraphs which are realized as the graph associahedra polytopes.
We also review the cartesian product structure of their facets.
Section~\ref{s:tonks} shows how the Tonks cellular projection
from the permutohedron to the associahedron can be factored in
 multiple ways so that the intermediate polytopes are graph associahedra
for connected graphs. We also show that the Tonks projection can be
extended to cellular projections to the simplices,
 again in such a way that
the intermediate polytopes are graph associahedra. In particular, we focus on
 a factorization of the Tonks projection which
has the cyclohedron (cycle graph associahedra) as an intermediate quotient polytope
 between permutohedron and associahedron.
In Section~\ref{s:newview} we use the viewpoint of graph
associahedra to redescribe the products in $\ssym$ and $\ysym$, and
to point out their geometric interpretation. The latter is based upon our
new cellular projections as well as classical
 inclusions of (cartesian products of) low dimensional
associahedra and permutohedra as faces in the higher dimensional polytopes of their respective sequences.
In Section~\ref{s:cyclo} we begin our exploration of new graded algebras with the vertices of the
 cyclohedron. Then we show that
the linear projections following from the factored Tonks projection
(restricted to vertices) are algebra maps. We extend these findings
in Section~\ref{s:chap} to the full algebras based on all the faces
of the polytopes involved. Finally in Section~\ref{s:simp} we
generalize our discoveries to the sequence of edgeless graph
associahedra. This allows us to build a graded algebra based upon
the vertices of the simplices. By carefully extending that structure
to the faces of the simplices we find a new graded (one-sided) Hopf
algebra, with graded dimension $2^n$.

%
%
\section{Review of some geometric combinatorics} \label{s:defns}

We begin with definitions of graph associahedra; the reader is encouraged to
 see \cite[Section 1]{dev-carr} and \cite{dev-forc} for further details.

\begin{defi}
Let $G$ be a finite connected simple graph.  A \emph{tube} is a set of nodes of $G$ whose induced
graph is a connected subgraph of $G$.  Two tubes $u$ and $v$ may interact on the graph as follows:
\begin{enumerate}
\item Tubes are \emph{nested} if  $u \subset v$.
\item Tubes are \emph{far apart} if $u \cup v$ is not a tube in $G,$ that is, the induced subgraph of the union
 is not connected, or
none of the nodes of $u$ are adjacent to a node of $v$.
\end{enumerate}
Tubes are \emph{compatible} if they are either nested or far apart.
We call $G$ itself the \emph{universal tube}.
 A \emph{tubing} $U$ of $G$ is a set of tubes of $G$ such that every pair of tubes in $U$ is
 compatible; moreover, we force every tubing of $G$ to contain (by default) its universal tube.
By the
 term $k$-\emph{tubing} we refer to a tubing made up of $k$ tubes, for $k \in \{1,\dots,n\}.$
 \end{defi}

When $G$ is a disconnected graph with connected components $G_1$,
\ldots, $G_k$, an additional condition is needed: If $u_i$ is the
tube of $G$ whose induced graph is $G_i$, then any tubing of $G$
cannot contain all of the tubes $\{u_1, \ldots, u_k\}$. However, the
universal tube is still included despite being disconnected.
 Parts (a)-(c) of Figure~\ref{f:legaltubing} from \cite{dev-forc} show examples of allowable tubings,
 whereas (d)-(f) depict the forbidden ones.

\begin{figure}[h]
\includegraphics[width=\textwidth]{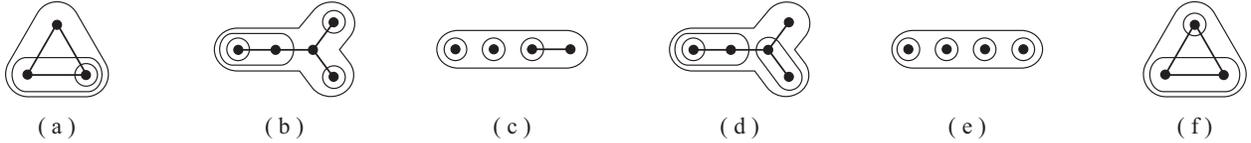}
\caption{(a)-(c) Allowable tubings and (d)-(f) forbidden tubings,
figure from \cite{dev-forc}.} \label{f:legaltubing}
\end{figure}


\begin{thm} {\textup{\cite[Section 3]{dev-carr}}} \label{t:graph}
For a graph $G$ with $n$ nodes, the \emph{graph associahedron} $\KG$
is a simple, convex polytope of dimension $n-1$ whose face poset is
isomorphic to the set of tubings of $G$, ordered by the relationship $U \prec
U'$ if $U$ is
 obtained from $U'$ by adding tubes.
\label{d:pg}
\end{thm}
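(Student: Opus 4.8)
I would prove this by realizing $\KG$ explicitly as an iterated truncation of the simplex $\Delta^{n-1}$ and then keeping exact combinatorial control of the face poset through each truncation; the argument runs by induction on the number of tubes, with an ambient induction on the number of nodes $n$. First I would fix the dictionary: identify the $n$ facets $F_1,\dots,F_n$ of $\Delta^{n-1}$ with the nodes of $G$, so that every face of $\Delta^{n-1}$ is $\bigcap_{i\in S}F_i$ for a unique proper subset $S\subsetneq[n]$; writing that face as $\mathrm{Face}(S)$, the assignment $S\mapsto\mathrm{Face}(S)$ is an order-reversing bijection onto the face poset, with $\dim\mathrm{Face}(S)=n-1-|S|$. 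Thus $\mathrm{Face}(\emptyset)$ is the whole simplex (the default universal tube), each singleton $\{i\}$ gives a facet, and a proper tube $t$ with $|t|\ge2$ gives a face of codimension $|t|\ge2$. This is exactly the tubing description of $\Delta^{n-1}=\K(\text{edgeless graph on }n\text{ nodes})$, which anchors the induction on $n$.

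Next, I would fix a linear order $t_1,\dots,t_m$ on the proper tubes of $G$ of size $\ge2$ that refines reverse inclusion (so $t_a$ precedes $t_b$ whenever $t_a\supsetneq t_b$; equivalently, lower-dimensional faces of the simplex get truncated first), put $P_0=\Delta^{n-1}$, let $P_a$ be obtained from $P_{a-1}$ by truncating the face $\mathrm{Face}(t_a)$, and set $\KG:=P_m$. Every truncation preserves dimension, so $\dim\KG=n-1$; and since the simplex is simple, and truncating a codimension-$c$ face $F$ of a simple polytope replaces $F$ by a new facet combinatorially isomorphic to $F\times\Delta^{c-1}$ (a product of simple polytopes, glued simply onto the remaining boundary), simplicity and convexity persist throughout. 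Hence $\KG$ is a simple convex polytope of dimension $n-1$, as claimed.

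The real content is the inductive claim (on $a$) that the face poset of $P_a$ is isomorphic to the poset of tubings of $G$ all of whose tubes are singletons, the universal tube, or among $t_1,\dots,t_a$, ordered so that adding tubes moves downward; for $a=m$ this is the theorem. Two things need to be checked at each step. First, $\mathrm{Face}(t_a)$ is still a face of $P_{a-1}$: every earlier truncation was at some $\mathrm{Face}(t_b)$ with either $t_b\supsetneq t_a$, so $\mathrm{Face}(t_b)$ is a subface of $\mathrm{Face}(t_a)$ and its truncation only cuts a corner out of $\mathrm{Face}(t_a)$, or $t_b$ incomparable to $t_a$, so the two faces meet (if at all) in the proper common subface $\mathrm{Face}(t_a\cup t_b)$ and $\mathrm{Face}(t_a)$ again survives --- and this is exactly where the choice of order matters, since it forbids having already truncated a tube $t_b\subsetneq t_a$, whose face strictly contains $\mathrm{Face}(t_a)$ and whose truncation would delete it. Second, the faces created by the truncation must match the new tubings: the new facet is $\mathrm{Face}(t_a)\times\Delta^{|t_a|-1}$, and (once the remaining truncations are taken into account) its face poset becomes that of $\K(G|_{t_a})\times\K(\text{reconnected complement of }t_a)$, where $G|_{t_a}$ is the induced subgraph on $t_a$ and the reconnected complement is obtained from $G$ by deleting the nodes of $t_a$ and then joining two surviving nodes whenever each has a neighbour in $t_a$ --- both graphs having fewer than $n$ nodes, so the inductive hypothesis applies. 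This meshes with the combinatorics because any tubing of $G$ that contains $t_a$ decomposes uniquely into a tubing of $G|_{t_a}$ (its tubes inside $t_a$, with $t_a$ itself taken as universal tube) together with a tubing of the reconnected complement (all remaining tubes), precisely mirroring the product; and the faces of $P_{a-1}$ untouched by the truncation are exactly the tubings not using $t_a$.

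The main obstacle is this last bookkeeping: maintaining an exact poset-level description of $P_a$ through every truncation, in particular verifying the product decomposition of each newly created facet and its compatibility with the decomposition of tubings, and checking that the combinatorial type of $\KG$ is independent of the chosen linear extension of the order (so that $\KG$ is well defined up to combinatorial isomorphism). The remaining points --- dimension, simplicity, convexity, and the edgeless base case --- are routine. The disconnected case requires only a remark: the union of all the component tubes equals $[n]$, so the face it would index in $\Delta^{n-1}$ is empty and is never truncated, which is exactly the geometric shadow of the extra forbidden-tubing condition, and the argument above then goes through unchanged.
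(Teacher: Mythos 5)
The paper does not actually prove this statement---it is quoted directly from Carr and Devadoss \cite{dev-carr}, where it is established essentially as you propose: by truncating the faces of $\Delta^{n-1}$ indexed by the tubes of size at least two, largest tubes (lowest-dimensional faces) first, and tracking the face poset through each truncation via the product decomposition of the newly created facets. Your sketch is a faithful outline of that argument, with the correct dictionary (tube $t \leftrightarrow \bigcap_{i\in t}F_i$, so singleton tubes are already facets) and the correct truncation order, and the points you flag as the real work---the poset bookkeeping, independence of the linear extension, and the facet decomposition $\KG(t)\times\K\rec{t}$---are precisely what the cited source carries out in detail.
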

 The vertices of the graph associahedron are the $n$-tubings of $G.$
 Faces of dimension $k$ are indexed by $(n-k)$-tubings of $G.$ In fact,
 the barycentric subdivision of ${\mathcal K}G$ is precisely the geometric realization of the described poset of tubings.
 Many of the face vectors of  graph associahedra for path-like graphs have been found, as shown in \cite{post2}.
  This source also contains the face vectors for the cyclohedra. There are many open questions
regarding formulas for the face vectors of
graph associahedra for specific types of graphs.

\begin{exam}
Figure~\ref{f:kwexmp}, partly from \cite{dev-forc}, shows two
examples of graph associahedra.
 These have underlying graphs a path and a disconnected graph, respectively, with three nodes each.
  These turn out to be the 2 dimensional associahedron and a square.
  The case of a three node complete graph, which is both the
  cyclohedron and the permutohedron, is shown in
  Figure~\ref{f:indexing}.
\end{exam}

\begin{figure}[h]
\includegraphics[width=\textwidth]{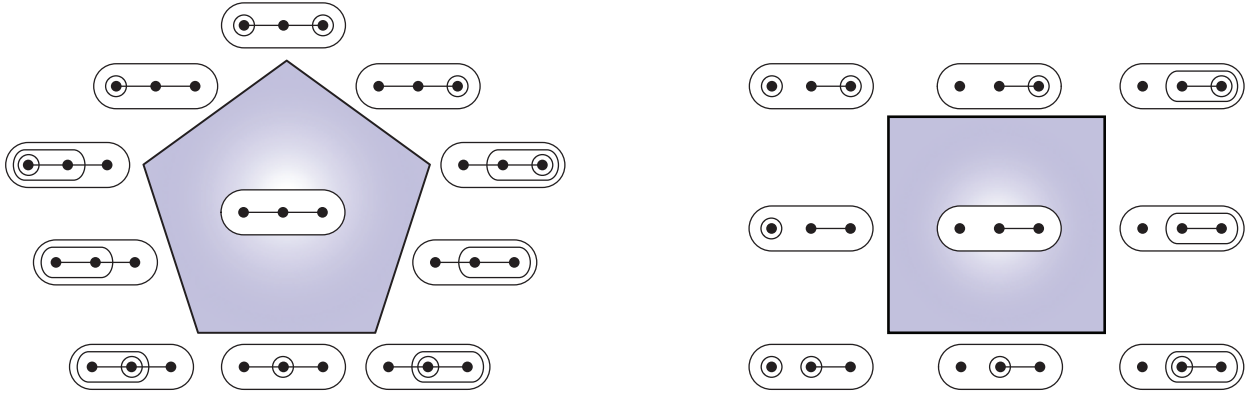}
\caption{Graph associahedra of a path and a disconnected graph. The 3-cube is found as the
graph-associahedron of two disjoint edges on four nodes, but no simple graph yields the 4-cube.}
\label{f:kwexmp}
\end{figure}

To describe the face structure of the graph associahedra we need a
definition from \cite[Section 2]{dev-carr}.

\begin{defi}
For graph $G$ and a collection of nodes $t$, construct a new graph
$\rec{t}$ called the \emph{reconnected complement}: If $V$ is the
set of nodes of $G$, then $V-t$ is the set of nodes of $\rec{t}$.
There is an edge between nodes $a$ and $b$ in $\rec{t}$ if $\{a,b\} \cup t'$ is connected in $G$ for some $t'\subseteq t$.
\end{defi}
\begin{exam}
\noindent Figure~\ref{f:recon} illustrates some examples of graphs
along with their reconnected complements.
\end{exam}

\begin{figure}[h]
\includegraphics {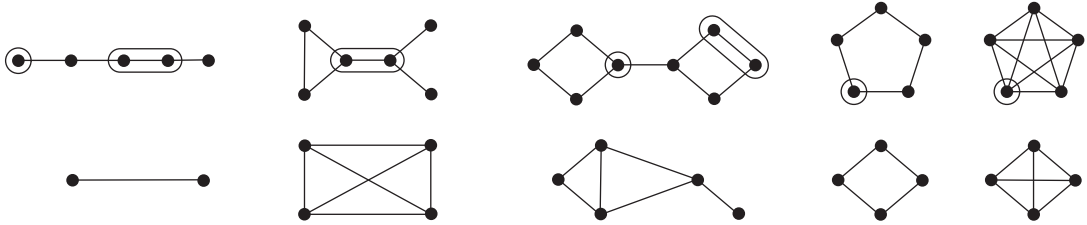}
\caption{Examples of tubes and their reconnected complements.}
\label{f:recon}
\end{figure}

For a given tube $t$ and a graph $G$, let $G(t)$ denote the induced
subgraph on the graph $G$. By abuse of notation, we sometimes refer
to $G(t)$ as a tube.
\begin{thm} \label{t:facet} \cite[Theorem 2.9]{dev-carr}
Let $V$ be a facet of $\KG,$ that is, a face of dimension $n-2$ of ${\mathcal K}G$, where $G$ has $n$ nodes.
Let $t$ be the single, non-universal, tube of $V$ . The face poset of $V$ is isomorphic to $ \KG(t) \times \K
\rec{t}$.
\end{thm}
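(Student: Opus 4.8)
The plan is to use Theorem~\ref{t:graph} to turn the statement into pure combinatorics of tubings, and then to exhibit an explicit poset isomorphism. By Theorem~\ref{t:graph} the faces of the polytope $V$ are exactly the faces of $\KG$ contained in $V$, i.e.\ the tubings $U$ of $G$ with $t \in U$, ordered so that adjoining tubes moves downward; likewise the face poset of $\KG(t) \times \K\rec{t}$ is the product of (tubings of $G(t)$) with (tubings of $\rec{t}$), each ordered by adjoining tubes. So it is enough to produce an order-preserving bijection between tubings of $G$ containing $t$ and such pairs.

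Given a tubing $U$ with $t \in U$, I would split its tubes: let $U_{\mathrm{in}} = \{ u \in U : u \subseteq t \}$ and $U_{\mathrm{out}} = \{ u \setminus t : u \in U,\ u \not\subseteq t \}$. The first point is that $U_{\mathrm{in}}$ is a tubing of $G(t)$: each such $u$ is connected in $G$, hence in the induced subgraph $G(t)$; pairwise compatibility is inherited from $G$; and $t$ itself serves as the universal tube of $G(t)$. The second, and main, point is that $U_{\mathrm{out}}$ is a tubing of $\rec{t}$. For this I would first establish the tube-level fact: the map $u \mapsto u \setminus t$ is a bijection from $\{\,$tubes $u$ of $G$ with $u \not\subseteq t$ that are compatible with $t\,\}$ onto the set of tubes of $\rec{t}$, whose inverse sends a tube $w$ of $\rec{t}$ to $w$ itself when $w$ is far apart from $t$ in $G$, and to $w \cup t$ otherwise. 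Checking this uses the edge rule for $\rec{t}$ (an edge $ab$ whenever $\{a,b\} \cup t'$ is connected in $G$ for some $t' \subseteq t$): connectivity of $w$ in $\rec{t}$ corresponds to connectivity of $w \cup t$ in $G$, and one verifies by a short case analysis — on whether $G[w]$ is connected and whether $w$ is adjacent to $t$ — that in every case exactly one of $w$, $w \cup t$ is simultaneously a tube of $G$ and compatible with $t$. One then checks that this bijection carries compatibility of two tubes of $G$ lying outside $t$ to compatibility in $\rec{t}$, and conversely; again the $\rec{t}$-edge rule is what powers this. Since $G \setminus t$ is the universal tube of $\rec{t}$, $U_{\mathrm{out}}$ contains it, so it is a genuine tubing.

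To finish I would assemble the inverse and verify the order structure. Given a tubing $A$ of $G(t)$ and a tubing $B$ of $\rec{t}$, set $U = A \cup \{ \widehat{w} : w \in B \}$, where $\widehat{w}$ is the tube of $G$ assigned to $w$ by the inverse bijection above; then $t \in U$ (it lies in $A$), and one checks that $U$ is pairwise compatible and that the two constructions $U \mapsto (U_{\mathrm{in}}, U_{\mathrm{out}})$ and $(A,B)\mapsto U$ are mutually inverse. A useful lemma to isolate is that a tube $u \subseteq t$ and a tube $v$ of $G$ with $v \not\subseteq t$ and $v$ compatible with $t$ are automatically compatible in $G$: if $t \subsetneq v$ then $u \subseteq v$, while if $v$ is far apart from $t$ then $G[u \cup v] \subseteq G[t \cup v]$ forces $u$ and $v$ to be far apart as well. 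This ``no cross-constraint'' is precisely what makes the correspondence land in an honest direct product rather than a twisted poset, and order-preservation in both directions is then immediate, since $U \subseteq U'$ splits as $U_{\mathrm{in}} \subseteq U'_{\mathrm{in}}$ together with $U_{\mathrm{out}} \subseteq U'_{\mathrm{out}}$.

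The main obstacle is the tube-level bijection between $\{u : u \not\subseteq t,\ u \text{ compatible with } t\}$ and the tubes of $\rec{t}$: one must show surjectivity and well-definedness of the inverse, i.e.\ that for each tube $w$ of $\rec{t}$ precisely one of $w$, $w \cup t$ qualifies, and this is exactly where the reconnected-complement edge rule has to be used carefully. (If $G$ is disconnected one additionally checks that the extra ``forbidden tubing'' clause for disconnected graphs is respected on both sides; when $G$ is connected, both $G(t)$ and $\rec{t}$ are connected, so that subtlety does not arise.)
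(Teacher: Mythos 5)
The paper does not actually prove this statement---it is imported verbatim as \cite[Theorem 2.9]{dev-carr}---so there is no in-paper argument to compare against; your proof is correct and is essentially the standard one, and your inverse map (send a tube $w$ of $\rec{t}$ to $w$ when $w$ is far apart from $t$, otherwise to $w \cup t$) agrees exactly with the description of $\hat{\rho}$ that the paper recalls later, namely that ``each tube of $T'$ is expanded by taking its union with $t$ if that union is itself a tube.'' One sentence is imprecise: connectivity of $w$ in $\rec{t}$ is not equivalent to connectivity of $w \cup t$ in $G$ (a $G$-connected $w$ far apart from $t$ is a tube of $\rec{t}$ even though $w \cup t$ is disconnected); the correct statement is that $w$ is a tube of $\rec{t}$ if and only if $w$ lies in a single connected component of the subgraph of $G$ induced by $w \cup t$, and your subsequent case split on whether $G(w)$ is connected and whether $w$ is adjacent to $t$ already handles this correctly, so it is a wording issue rather than a gap.
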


A pair of examples is shown in Figure \ref{f:facet}.  The isomorphism
described in \cite[Theorem 2.9]{dev-carr} is called $\hat{\rho}.$
Since we will be using this isomorphism more than once as an
embedding  of faces in polytopes, then we will specify it, according
to the tube involved, as $\hat{\rho}_t:\KG(t) \times \K
\rec{t}\hookrightarrow\KG.$

Given a pair of tubings from $T\in \KG_t$ and $T'\in \K\rec{t}$ the image $\hat{\rho}(T,T')$ consists of all
of $T$ and an expanded version of $T'.$ In the latter each tube of $T'$ is expanded by taking its union with
$t$ if that union is itself a tube. A specific example of the action of $\hat{\rho}_t$ is in
Figure~\ref{f:eta_rho}. In fact, often there will be more than one tube involved, as we now indicate:

\begin{cor}\label{c:mfacet}
Let $\{t_1,\dots ,t_k,G\}$ be an explicit tubing of $G$, such that each pair of non-universal
tubes in the list is far apart.
Then the face of $\KG$ associated to  that tubing is isomorphic to
 $\KG(t_1)\times \dots \times \KG(t_n)\times \K\rec{t_1\cup\dots\cup t_k}.$
\end{cor}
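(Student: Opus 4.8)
The plan is to argue by induction on the number $k$ of non-universal tubes; the case $k=1$ is precisely Theorem~\ref{t:facet}. In view of Theorem~\ref{t:graph} it suffices to build, at each stage, a polytope isomorphism (equivalently, an order isomorphism of the posets of refining tubings), and the inductive step will be obtained by composing the single-tube isomorphism $\hat{\rho}_{t_1}$ with the isomorphism furnished by the inductive hypothesis on a reconnected complement.

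So let $F$ be the face of $\KG$ indexed by $\{t_1,\dots,t_k,G\}$ with $k\ge 2$. Because the $t_i$ are pairwise far apart, hence pairwise disjoint, the tubing of $F$ refines $\{t_1,G\}$, so $F$ is a face of the facet $V$ carrying the tube $t_1$, and $\hat{\rho}_{t_1}$ realizes $V$ as $\KG(t_1)\times\K\rec{t_1}$. Under this identification $F$ is a product face $F_1\times F_1'$, and I would read off the two factors from the explicit description of $\hat{\rho}_{t_1}$ (keep all of the first tubing, expand each tube of the second by $t_1$ when possible): none of $t_2,\dots,t_k$ is contained in $t_1$, so the first factor carries only the universal tube of $G(t_1)$, i.e.\ $F_1=\KG(t_1)$; and $t_i\cup t_1$ is not a tube of $G$ for $i\ge 2$, so the expansion step fixes each $t_i$ and $F_1'$ is the face of $\K\rec{t_1}$ indexed by $\{t_2,\dots,t_k,\rec{t_1}\}$. (That this is a valid tubing of $\rec{t_1}$, with the $t_i$ still tubes, is automatic since $\hat{\rho}_{t_1}$ is a poset isomorphism and $F$ is a genuine face of $V$.)

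Now I would apply the inductive hypothesis to $F_1'$. The tubes $t_2,\dots,t_k$ are pairwise disjoint and mutually compatible in $\rec{t_1}$, hence pairwise far apart there, so the hypothesis gives that $F_1'$ is the product of $\K\rec{t_1}(t_2),\dots,\K\rec{t_1}(t_k)$ with the graph associahedron of the reconnected complement of $t_2\cup\cdots\cup t_k$ taken inside $\rec{t_1}$. Two elementary graph lemmas then complete the identification. First, for a node set $A$ that is far apart from $t_1$ in $G$ one has $\rec{t_1}(A)=G(A)$: a non-edge of $G$ within $A$ cannot become an edge of $\rec{t_1}$, since that would force a node of $A$ to be $G$-adjacent to a node of $t_1$. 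Taking $A=t_i$ replaces the first $k-1$ factors above by $\KG(t_2),\dots,\KG(t_k)$. Second, reconnected complements iterate: for disjoint node sets $A$ and $B$, the reconnected complement of $A$ inside $\rec{B}$ equals $\rec{A\cup B}$, which one checks by lifting a connecting subgraph of $\rec{B}$ to one of $G$ by adjoining nodes of $B$, and conversely by contracting each maximal stretch of a $G$-path that runs through $B$ to a single edge of $\rec{B}$. With $B=t_1$ and $A=t_2\cup\cdots\cup t_k$ this turns the last factor into $\K\rec{t_1\cup\cdots\cup t_k}$, and assembling everything gives $F\cong\KG(t_1)\times\cdots\times\KG(t_k)\times\K\rec{t_1\cup\cdots\cup t_k}$.

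The \emph{main obstacle} is the second lemma, the compatibility of reconnected complements under iteration; it is only a path-lifting/path-contraction argument, but it carries essentially all of the content, and once it (together with the easy first lemma) is available the induction is pure bookkeeping. One could alternatively try to write the poset isomorphism down directly, partitioning each tube of a refining tubing according to which $t_i$ (if any) contains it and recording which tubes survive in the reconnected complement after expansion, but bookkeeping the ``far apart'' conditions that way is no simpler than peeling off one tube at a time.
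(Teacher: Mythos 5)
Your proposal is correct and follows essentially the same route as the paper: the paper's proof also iterates Theorem~\ref{t:facet} and rests on the single key identity $\rec{t_1\cup\dots\cup t_k} = (\dots((G^*(t_1))^*(t_2))^*\dots)^*(t_k)$, which is exactly your second lemma. You merely make explicit the bookkeeping the paper leaves implicit (the induction, and the easy fact that tubes far apart from $t_1$ induce the same subgraph in $\rec{t_1}$ as in $G$).
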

We will denote the embedding by:
$$\hat{\rho}_{t_1\dots t_k}:\KG(t_1)\times \dots \times
\KG(t_n)\times \K\rec{t_1\cup\dots\cup t_k}\hookrightarrow\KG.$$
\begin{proof}
This follows directly from Theorem~\ref{t:facet}. Note that
the reconnected complement with respect to the union of several tubes
$t_1,\dots,t_k$ is the same as taking successive iterated reconnected complements with respect to each tube in the list.
That is, $$\rec{t_1\cup\dots\cup t_k} = (\dots((G^*(t_1))^*(t_2))^*\dots)^*(t_k).$$
\end{proof}

\begin{figure}[h]
\includegraphics[width=\textwidth]{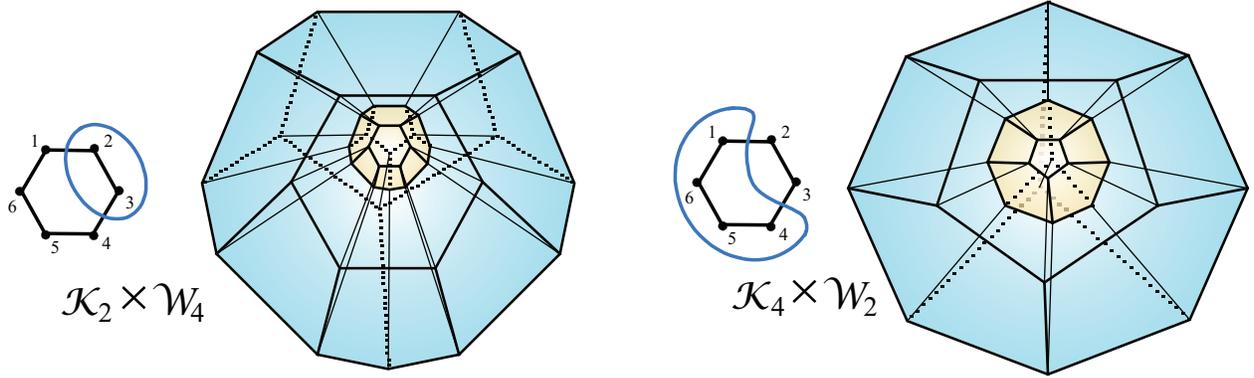}
\caption{Two facets of the cyclohedron $\w_6$.} \label{f:facet}
\end{figure}

\begin{rema}
As shown in \cite{dev-real}, the graph associahedron of the graph consisting of $n$ nodes and no edges is the
$(n-1)$-simplex $\Delta^{(n-1)}$. Thus the graph associahedron of a graph with connected components $G_1,
G_2,\dots,G_k$ is actually equivalent to the polytope $\KG_1 \times\dots\times\KG_k\times\Delta^{(k-1)}.$
\end{rema}
This equivalence implies that:
\begin{lem}\label{l:comp}
For a disconnected graph $G$ with multiple connected components
$G_1, G_2,\dots,G_k$, there is always a cellular surjection
$$ \eta: \KG \to \KG_1 \times\dots\times\KG_k.$$
\end{lem}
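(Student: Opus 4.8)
The plan is to obtain $\eta$ immediately from the equivalence recorded in the remark just above. That remark identifies $\KG$ with the Cartesian product of polytopes $\KG_1 \times \dots \times \KG_k \times \Delta^{(k-1)}$, and I would simply take $\eta$ to be this identification followed by the coordinate projection
\[
\pi \colon \KG_1 \times \dots \times \KG_k \times \Delta^{(k-1)} \longrightarrow \KG_1 \times \dots \times \KG_k
\]
that forgets the simplex factor. Everything then reduces to the standard fact that projecting a product of polytopes onto a sub-product of its factors is a cellular surjection: the face poset of a product polytope is the product of the face posets, so a typical face of the source has the form $F_1 \times \dots \times F_k \times \sigma$ with $F_i$ a face of $\KG_i$ and $\sigma$ a face of $\Delta^{(k-1)}$, and $\pi$ carries it onto the face $F_1 \times \dots \times F_k$. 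This is onto because $F_1 \times \dots \times F_k$ is already the image of $F_1 \times \dots \times F_k \times \Delta^{(k-1)}$, and $\pi$ sends the relative interior of each cell onto the relative interior of its image, which is exactly what ``cellular surjection'' asks for.

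It is worth spelling out the induced description of $\eta$ directly in terms of tubings, since that is the shape in which the map will be reused later. Because every non-universal tube of $G$ is connected, it lies inside a unique component $G_i$, so a tubing $U$ of $G$ can be sorted by component, and I would set $\eta(U) = (U_1, \dots, U_k)$, where $U_i$ is the set of tubes of $U$ contained in $G_i$ together with the universal tube $G_i$ adjoined by default. Adding tubes to $U$ adds tubes to each $U_i$, so this is order-preserving on tubings and hence a cellular map of the associated face posets, and it agrees with $\pi$ under the identification of the remark.

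The one place that calls for a moment's care is surjectivity onto tubings, since the extra ``forbidden tubing'' restriction for a disconnected graph could conceivably block some tuples; I do not expect it to, and checking this is the only genuine (if minor) point. Given any $(U_1, \dots, U_k)$ with $U_i$ a tubing of $G_i$, the set $U = \{G\} \cup \bigcup_{i=1}^{k} \{\, t \in U_i \colon t \subsetneq G_i \,\}$ is a legal tubing of $G$: every pair of tubes drawn from different components is far apart, and $U$ contains none of the component tubes $G_i$ as an explicit tube, so the forbidden configuration $\{u_1, \dots, u_k\} \subseteq U$ never occurs; and by construction $\eta(U) = (U_1, \dots, U_k)$. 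Thus $\eta$ is onto. In short, I expect the cited product decomposition to be doing all the real work, with the rest being routine bookkeeping about product polytopes and tubings.
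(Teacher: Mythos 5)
Your proposal is correct and follows exactly the route the paper intends: the paper offers no explicit proof beyond the phrase ``This equivalence implies that,'' relying on the product decomposition $\KG \cong \KG_1 \times\dots\times\KG_k\times\Delta^{(k-1)}$ from the preceding remark and the projection that forgets the simplex factor. Your additional bookkeeping --- the tubing-level description of $\eta$ and the check that surjectivity is not obstructed by the forbidden-tubing condition for disconnected graphs --- only makes explicit what the paper leaves implicit.
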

An example is in Figure~\ref{f:eta_rho}.

\begin{figure}[h]
                  \centering
                  \includegraphics[width=\textwidth]{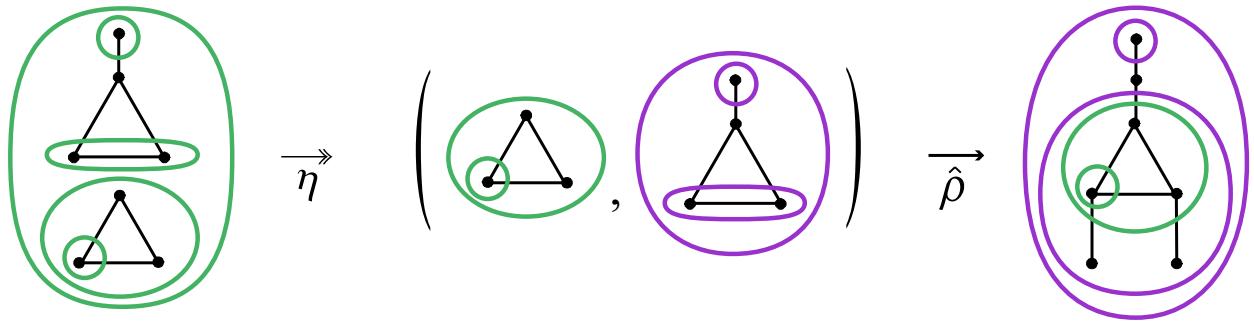}
     \caption{Example of the action of $\eta$ and $\hat{\rho}_t,$ where $t$ is the cycle sub-graph of the final graph.}\label{f:eta_rho}
  \end{figure}

%
%

\section{Factoring and extending the Tonks projection.}\label{s:tonks}
\subsection {Loday and Ronco's Hopf algebra map}
 The two most important existing mathematical structures we will use in this paper
  are the graded Hopf algebra
  of permutations, $\ssym$, and the graded Hopf algebra of binary trees, $\ysym$.
  The $n^{th}$ component of $\ssym$ has basis the
  symmetric group $S_n$, with number of elements counted by $n!$.  The $n^{th}$
  component of  $\ysym$ has basis the collection of binary trees with $n$
  interior nodes, and thus $n+1$ leaves, denoted $\y_n$. These
  are counted by the Catalan numbers.

  The connection discovered by Loday and Ronco between the two algebras
   is due to the
  fact that a permutation on $n$ elements can be pictured as a binary tree with $n$ interior nodes, drawn so
   that the interior nodes are at $n$ different vertical
  heights from the base of the tree. This is called an \emph{ordered binary tree}. The interior nodes are numbered left to right.
We number the leaves $0,1,\dots,n-1 $
and the nodes $1,2,\dots,n-1.$ The $i^{th}$ node is ``between'' leaf $i-1$ and leaf $i$ where
``between'' might be described to mean that a rain drop falling between those leaves would be
caught at that node. Distinct vertical levels of the nodes are numbered top to bottom.
   Then for a permutation $\sigma \in S_n$ the
  corresponding tree has node $i$ at level $\sigma(i).$ The map from permutations to binary trees is
  achieved by forgetting the levels, and just retaining the tree. This classical surjection is
 denoted $\tau: S_n \to \y_n$. An example is in
 Figure~\ref{perm1}.

By a \emph{cellular surjection} of polytopes $P$ and $Q$ we refer to a map $f$ from the face poset of $P$ to
that of $Q$ which is onto and which preserves the poset structure. That is, if $x$ is a sub-face of $y$ in
$P$ then $f(x)$ is a sub-face of or equal to $f(y).$ A \emph{cellular projection} is a cellular surjection
which also has the property that the dimension of $f(x)$ is less than or equal to the dimension of $x.$
 In \cite{tonks} Tonks
  extended $\tau$ to a cellular projection from the permutohedron to the associahedron: $\Theta: \P_n \to \K_n$.
In the cellular projection a face of the permutohedron, which is leveled tree, is taken to its underlying
tree, which is a face of the associahedron.
   Figure~\ref{perm2} shows an example.
The new revelation of Loday and Ronco is that the map $\tau$ gives
rise to a Hopf algebraic
   projection $\bs{\tau}:\ssym \to \ysym$, so that
   the algebra of binary trees is seen to be embedded in the algebra of permutations.

\begin{figure}[h]
                  \centering
                  \includegraphics[width=\textwidth]{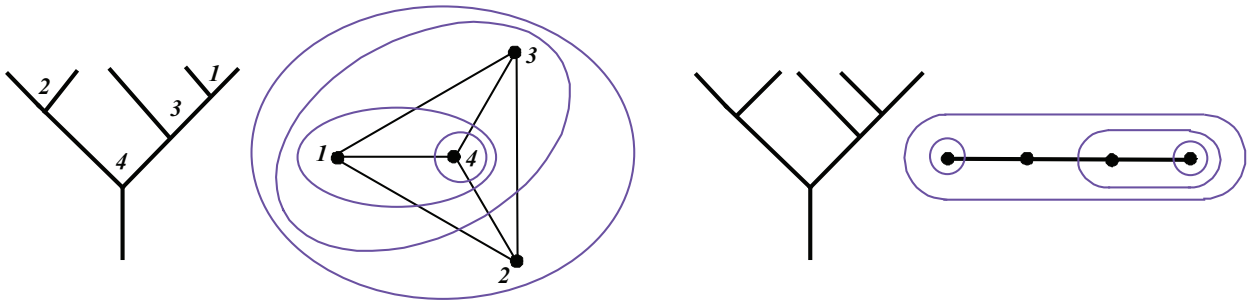}
     \caption{The permutation $\sigma = (2431) \in S_4$ pictured as an ordered tree and
                   as a tubing of the complete graph; the unordered binary tree, and its corresponding tubing.}\label{perm1}
  \end{figure}

%

\subsection{Tubings, permutations, and trees.}
   Our new approach to the Tonks projection is made possible by the recent discovery of
Devadoss in \cite{dev-real} that the
   graph-associahedron of the complete graph on $n$ vertices is precisely
   the $n^{th}$ permutohedron $\P_n$.
 Each of its vertices corresponds to a
   permutation of $n$ elements. Its faces in general correspond to ordered partitions of $[n].$ Keep in mind that for
a permutation $\sigma \in S_n,$ the corresponding ordered partition
of $[n]$ is
$(\{\sigma^{-1}(1)\},\{\sigma^{-1}(2)\},\dots\{\sigma^{-1}(n)\}).$

   Here is how to describe a bijection from the $n$-tubings on the complete graph to the permutations, as found by
Devadoss in \cite{dev-real}. First
   a numbering of the $n$ nodes must be chosen. Given an $n$-tubing,
   since the tubes are all nested we can number them
   starting with the innermost tube. Then the permutation $\sigma \in S_n$
   pictured by our $n$-tubing is such that node $i$
   is within tube $\sigma(i)$ but not within any tube nested inside of tube $\sigma(i).$
    Figure \ref{perm1} shows an example.

    It is easy to extend this bijection between $n$-tubings and permutations
    to all tubings of the complete graph and ordered partitions of $[n]$. Given a $k$-tubing of the complete graph,
    each tube contains some numbered nodes
   which are not contained in any other tube. These subsets of $[n],$ one for each tube, make up the $k$-partition, and the
   ordering of the partition is from innermost to outermost. Recall
   that an ordered $k$-partition of $[n]$ corresponds to a leveled
   tree with $n+1$ leaves and $k$ levels, numbered top to bottom, at which lie the internal nodes. Numbering the $n$ spaces between leaves
from left to right (imagine a raindrop falling into each space), we
see that the raindrops collecting at the internal nodes at level $i$
represent the $i^{th}$ subset in the partition.
   We denote our bijection by:
   $$
   f: \{ \text{leveled trees with $n$ leaves} \} \to \K( \text{complete graph on $n-1$ nodes}).
   $$
   Figure \ref{perm2} shows an example.

  \begin{figure}[h]
                  \centering
                  \includegraphics[width=\textwidth]{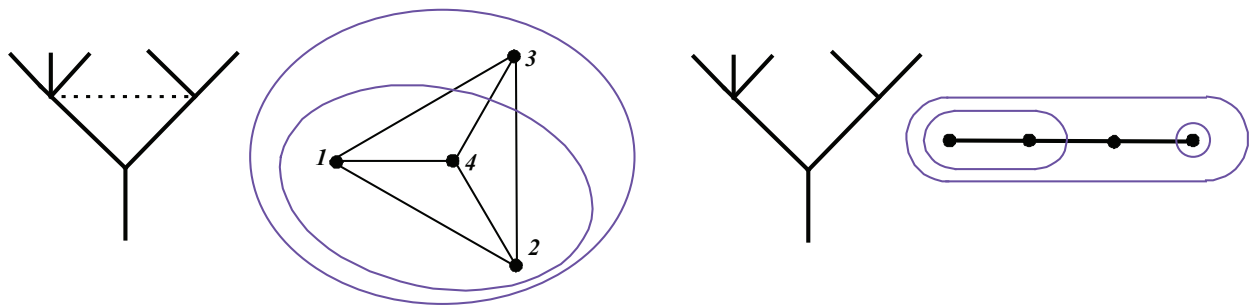}
     \caption{The ordered partition $(\{1,2,4\},\{3\})$ pictured as a leveled tree and
                   as a tubing of the complete graph; the underlying tree, and its corresponding tubing.}\label{perm2}
  \end{figure}

 The binary trees with $n+1$ leaves (and $n$ internal nodes)
 correspond to the vertices of the $(n-1)$-dimensional associahedron, or Stasheff polytope $\K_n$.
 In the world of graph-associahedra, these vertices correspond to the $n$-tubings of the path graph on $n$ nodes.
 Carr and Devadoss realized that in fact the path graph associahedron
 is precisely the Stasheff associahedron \cite{dev-carr}.
 Thus for any tree with $n+1$ leaves we can bijectively determine a tubing on the path graph.
 This correspondence can be described by creating a tube of the path graph for each internal node of the tree.
 We number the leaves from left to right $0,1,\dots,n$ and the nodes of the path from left to right $1,\dots,n.$
 The tube we create
 contains the same numbered nodes of the path graph as
  all but the leftmost leaf of the subtree determined by the internal node.
  This bijection we denote by:
  $$g: \{ \text{trees with $n$ leaves} \} \to \K( \text{path on $n-1$ nodes} ).$$
   Figures~\ref{perm1} and~\ref{perm2} show examples.

\subsection{Generalizing the Tonks projection}
The fact that every graph of $n$ nodes is a subgraph of the complete graph leads us to a grand factorization of the
Tonks cellular projection through all connected graph associahedra. Incomplete graphs are formed simply by removing edges
from the complete graph. As a single edge is deleted, the tubing
 is preserved up to connection. That is, if the nodes of a tube are no longer connected, it becomes
 two tubes made up of the two connected subgraphs spanned by its original set of nodes.
\begin{defi}
 Let $G$ be a graph on $n$ nodes, and let $e$ be an edge of $G$. Let $G-e$ denote the graph achieved be the deletion
of $e,$ while retaining all $n$ nodes.  We define a cellular projection
 $\Theta_e: \K G \twoheadrightarrow \K (G - e).$
First, allowing an abuse of notation, we define $\Theta_e$ on individual tubes. For $t$ a tube of $G$ such
that $t$ is not a tube of $G-e,$ then let $t', t''$ be the tubes of $G-e$ such that $t'\cup t'' = t.$ Then:
$$
\Theta_e(t) =  \begin{cases}
\{t\},    &t \text{ a tube of } G-e \\
\{t',t''\},    & otherwise.
\end{cases}
$$
Now given a tubing $T$ on $G$ we define its image as follows:
  $$
 \Theta_e(T) = \bigcup_{t\in T} \Theta_e(t).
   $$
\end{defi}

See Figures~\ref{f:tonks},~\ref{f:slant} and~\ref{f:disc} for examples.
\begin{figure}[h]
                  \centering
                  \includegraphics[width=\textwidth]{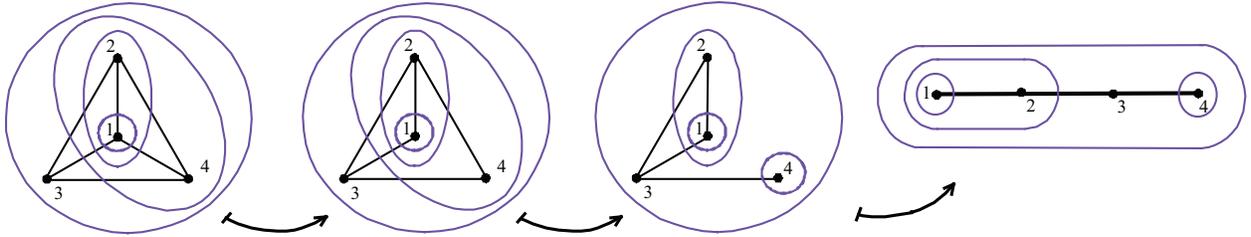}
                  \caption{The Tonks projection performed on $(1 2 4 3)$, factored by graphs. }\label{f:tonks}
  \end{figure}

\begin{lem}\label{l:newtonks}
For a graph $G$ with an edge $e$, $\Theta_e$ is a cellular surjection of polytopes $\K G \twoheadrightarrow
\K (G - e).$
\end{lem}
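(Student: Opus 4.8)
The plan is to verify directly that $\Theta_e$ as defined on tubings is well-defined, order-preserving, surjective, and dimension-nonincreasing, i.e.\ that it is a cellular projection (hence in particular a cellular surjection) from $\K G$ to $\K(G-e)$.

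\textbf{Well-definedness.} First I would check that $\Theta_e(T)$ is actually a tubing of $G-e$ whenever $T$ is a tubing of $G$. The content here is compatibility: if $u,v \in T$ are compatible tubes of $G$, I need every pair drawn from $\Theta_e(u) \cup \Theta_e(v)$ to be compatible in $G-e$. There are a few cases. If $u \subseteq v$ (nested), then after splitting each into its connected components in $G-e$, each component of $u$ is contained in some component of $v$ (since components are determined by connectivity of the underlying node sets), so the resulting tubes are still nested or far apart. If $u,v$ are far apart in $G$ — either $u\cup v$ disconnected in $G$, or no node of $u$ adjacent to a node of $v$ — then deleting an edge only destroys adjacencies, so they remain far apart in $G-e$, and the same holds for their components. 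I also need the ``universal tube'' bookkeeping: $G$ itself is in $T$; if $G$ is still connected it maps to $\{G-e\}$, the universal tube of $G-e$; if $e$ was a bridge, $G-e$ has two components and $\Theta_e(G)$ is those two components, which is exactly the universal (disconnected) tube we are forced to include, and this is legal because $T$ could not have contained both sides as separate tubes already — but actually I should be careful to confirm the resulting set satisfies the disconnected-graph exclusion condition, which follows because the components of $G-e$ as tubes are automatically present and the condition only forbids having them all as \emph{proper} tubes in excess of the universal one.

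\textbf{Order preservation and dimension.} Next, if $T \prec T'$ in the face poset of $\K G$ (meaning $T \supseteq T'$ as sets of tubes, $T$ obtained by adding tubes), I would show $\Theta_e(T) \supseteq \Theta_e(T')$, so $\Theta_e(T) \preceq \Theta_e(T')$ or they are equal. This is immediate from the formula $\Theta_e(T) = \bigcup_{t\in T}\Theta_e(t)$: a larger index set gives a larger (or equal) union. For the dimension bound, a $k$-tubing of $G$ indexes a face of dimension $n-k$; since $\Theta_e$ can only split tubes, $|\Theta_e(T)| \ge |T|$, hence $\dim \Theta_e(T) = n - |\Theta_e(T)| \le n - |T| = \dim T$. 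I should note the one subtlety: splitting can cause coincidences, e.g.\ $\Theta_e(t')$ for one original tube may already equal a component produced from another, but this only makes the image smaller, strengthening the inequality, so the bound still holds.

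\textbf{Surjectivity.} The main thing to nail down is surjectivity onto \emph{all} faces of $\K(G-e)$. Given any tubing $S$ of $G-e$, I want a tubing $T$ of $G$ with $\Theta_e(T) = S$. The natural candidate is $T = S$ itself, when $S$ is already a valid tubing of $G$ — every tube of $G-e$ is a tube of $G$ (adding the edge $e$ back only adds connections, never disconnects), and compatibility of $S$ in $G$ needs checking since re-adding $e$ could turn a ``far apart'' pair into an adjacent one. If that happens the pair is still compatible provided the union isn't a tube, but it could become a tube; in that case $S$ as-is may fail to be a tubing of $G$, so instead I should argue more carefully: take the preimage by ``merging along $e$'' — this is where I expect the real work to be, and I suspect the cleanest route is to note that $\Theta_e$ restricted to vertices already surjects (each $n$-tubing/permutation-type object of $G-e$ lifts, using the bijections $f,g$ and the classical Tonks surjectivity as a template), and then extend to higher faces by the order-preservation already established: any face of $\K(G-e)$ is above a vertex, lift the vertex, and take the appropriate containing face of its image chain. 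The hard part will be making this lifting of a general tubing $S$ explicit and checking the lift is genuinely a tubing of $G$; I would organize it by treating $e = \{a,b\}$ and replacing, in $S$, any pair of tubes that were ``$G-e$-components of a would-be $G$-tube'' by their union, verifying this stays compatible. Once surjectivity of the underlying set map is in hand, combined with the order-preservation and dimension bound above, all the requirements of a cellular surjection (indeed a cellular projection) are met, completing the proof.
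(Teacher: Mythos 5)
Your proposal is correct and follows essentially the same route as the paper: order preservation falls out of the union formula, the dimension bound from the fact that tubes can only split, and surjectivity is obtained by starting from the target tubing and iteratively merging pairs of tubes adjacent via $e$ until a valid tubing of $G$ is reached, which is exactly the paper's construction of a maximal preimage. The only difference is that you spell out the compatibility check for well-definedness (and the universal-tube bookkeeping when $e$ is a bridge), which the paper leaves implicit.
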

\begin{proof}
By Theorem~\ref{t:graph} we have that the  face posets of the
polytopes are isomorphic to the posets of tubings. The map takes a
tubing on $G$ to a tubing on $G-e$ with a greater or equal number of
tubes. This establishes its projective property.
 Also, for two tubings $U \prec U'$ of $G$ we see that either
$\Theta_e(U) = \Theta_e(U') $ or $\Theta_e(U) \prec \Theta_e(U').$ Thus the
poset structure is preserved by the projection.

Finally, the map is surjective, since given any tubing $T$ on $G-e$ we can find a (maximal) preimage $T'$ as
follows: First consider all the tubes of $T$ as a candidate tubing of $G.$ If it is a valid tubing, we have
our $T'.$ If not, then there must be a pair of tubes $t',t'' \in T$ which are adjacent via the edge $e,$ and
for which there are no tubes containing one of $t',t''$ but not the other. Then let $T_1$ be the result of
replacing that pair in $T$ with the single tube $t = t' \sqcup t''.$ If $T_1$ is a valid tubing of $G$ then
let $T'=T_1.$ If not, continue inductively.
\end{proof}

Composition of these cellular projections is commutative.
\begin{lem}\label{l:comm}
Let $e,e'$ be edges of $G.$ Then $\Theta_e \circ \Theta_{e'} =
\Theta_{e'} \circ \Theta_{e}.$
\end{lem}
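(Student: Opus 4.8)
The plan is to reduce the claimed commutativity $\Theta_e \circ \Theta_{e'} = \Theta_{e'} \circ \Theta_e$ to a statement about individual tubes, since both maps are defined tube-by-tube via $\Theta_e(T) = \bigcup_{t \in T}\Theta_e(t)$. First I would observe that for a tubing $T$ on $G$,
\[
(\Theta_e \circ \Theta_{e'})(T) = \bigcup_{t \in T} \Theta_e\big(\Theta_{e'}(t)\big),
\]
and similarly with $e,e'$ swapped, so it suffices to prove that for every single tube $t$ of $G$ we have $\Theta_e(\Theta_{e'}(t)) = \Theta_{e'}(\Theta_e(t))$ as sets of tubes of $G - e - e'$. (Here I am extending $\Theta_e$ to a set of tubes by applying it to each member and unioning, which is consistent with the tubing-level definition.)

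Next I would argue this tube-level identity by examining what $\Theta_e$ does to a tube: it splits a tube $t$ into its connected components in $G - e$. So $\Theta_{e'}(\Theta_e(t))$ is the set of connected components of the induced subgraph on the node set of $t$ after both edges $e$ and $e'$ have been removed — and this description is manifestly symmetric in $e$ and $e'$. The key point is that ``decompose into connected components of $G(t) - e$, then decompose each piece into connected components after also removing $e'$'' yields exactly the connected components of $G(t) - e - e'$, regardless of the order, because connected components of a graph are intrinsic to the graph and do not depend on the order in which we delete edges. I would state this cleanly as: for any node set $s$, the partition of $s$ into tubes given by $\Theta_{e_1}\circ\cdots\circ\Theta_{e_k}$ applied to the tube $s$ (when $s$ is a tube of $G$) is the partition of $s$ into the vertex sets of the connected components of the induced subgraph of $G - e_1 - \cdots - e_k$ on $s$. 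Commutativity is then immediate.

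The one subtlety — and the only place that needs care rather than being routine — is checking that the compatibility/validity of the resulting tubing is handled correctly and that no ``forbidden tubing'' issue arises: we need to know that applying $\Theta_e$ and then $\Theta_{e'}$ to a valid tubing of $G$ genuinely lands in a valid tubing of $G - e - e'$ and equals the other order as a \emph{tubing}, not merely as a set of node-subsets. But Lemma~\ref{l:newtonks} already guarantees that each $\Theta_e$ sends tubings to tubings, so $\Theta_e \circ \Theta_{e'}$ and $\Theta_{e'}\circ\Theta_e$ are both well-defined maps into tubings of $G - e - e'$; having shown they produce the same underlying set of tubes, they must be equal. So the main obstacle is essentially bookkeeping: making the ``connected components are order-independent'' observation precise enough to cover the case where the two tubes $t', t''$ appearing after one deletion interact differently with the second edge. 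I expect no real difficulty here, since edge deletion only ever refines the tube set and refinement of partitions into connected components is confluent.
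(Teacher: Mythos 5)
Your proposal is correct and follows essentially the same route as the paper: the paper's (very terse) proof likewise reduces to individual tubes and observes that a tube either persists or is broken into the same smaller tubes regardless of the order of the two deletions, i.e.\ into the connected components of the induced subgraph after both edges are removed. Your version just makes the order-independence of connected components explicit and adds the (harmless) check that both compositions land in valid tubings.
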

\begin{proof}
Consider the image of a tubing  of $G$ under either composition. A
tube of $G$ that is a tube of both $G-e$ and $G-e'$ will persist in
the image. Otherwise it will be broken, perhaps twice. The same
smaller tubes will result regardless of the order of the breaking.
\end{proof}

By Lemma~\ref{l:comm} we can unambiguously use the following
notation:
\begin{defi}\label{d:bige}
 For any collection $E$ of edges of $G$ we denote by
$\Theta_E:\KG \twoheadrightarrow \K(G-E)$  the composition of projections $\{\Theta_e ~|~e \in E\}.$
\end{defi}
 Now the Tonks projection from leveled trees to trees can be
 described in terms of the tubings. Beginning with a tubing on the complete graph with numbered nodes, we achieve one on
 the path graph by deleting all the edges of the complete
 graph except for those connecting the nodes in consecutive order from 1 to $n$. See Figures~\ref{perm1}, \ref{perm2} and~\ref{f:tonks} for a
 picture to accompany
the following:
\begin{thm} \label{t:ftonks}
 Let $e_{i,k}$ be the edge between the nodes
$i,i+k$ of the complete graph $G$ on $n$ nodes. Let $P =
\{e_{i,k}~|~ i \in \{1,\dots,n-2\}~ and~ k \in \{ 2,\dots,n-i\}\}.$
These are all but the edges of the path graph.
  Then the following composition gives the
Tonks map:
$$
g^{-1}\circ \Theta_P \circ f = \Theta.
$$
\end{thm}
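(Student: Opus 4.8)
The plan is to compute $g^{-1}\circ\Theta_P\circ f$ explicitly on an arbitrary face of the permutohedron and to recognize the output as the underlying tree of the corresponding leveled tree, which is by definition its image under the Tonks projection $\Theta$. Two observations make the set-up clean. First, $f$ and $g$ are isomorphisms of face posets ($\P_n$ onto $\KG$, and $\K_n$ onto the path-graph associahedron $\K(G-P)$), and by Lemma~\ref{l:newtonks} together with Definition~\ref{d:bige} the map $\Theta_P\colon\KG\twoheadrightarrow\K(G-P)$ is a cellular surjection; hence $g^{-1}\circ\Theta_P\circ f$ is automatically a well-defined cellular surjection from the permutohedron to the associahedron, and in particular $\Theta_P(f(L))$ is a legal tubing of the path for every face $L$, so nothing about well-definedness need be checked by hand. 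Second, it therefore suffices to check the claimed identity as an identity of set maps on faces.

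Next I would unwind the three maps. Write a face of $\P_n$ as an ordered partition $(S_1,\dots,S_k)$ of the set of spaces between the leaves of the corresponding leveled tree $L$ (a vertex being the case of all $S_j$ singletons, with $S_j=\{\sigma^{-1}(j)\}$ for the permutation $\sigma$). From the recipe for $f$, the tubing $f(L)$ of the complete graph $G$ is the nested chain $t_1\subsetneq\cdots\subsetneq t_k=G$ with $t_j=S_1\cup\cdots\cup S_j$. Applying $\Theta_P$ (the composition of the edge-deletions $\Theta_e$, $e\in P$, by Definition~\ref{d:bige}) replaces each $t_j$ by the set of its maximal blocks of consecutive integers, i.e.\ its connected components once every non-path edge is deleted, so that $\Theta_P(f(L))=\bigcup_{j=1}^{k}\{\text{maximal consecutive blocks of }t_j\}$. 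Finally $g^{-1}$ turns a tubing of the path into a tree by reading each of its tubes $\{a+1,\dots,b\}$ as the subtree spanning the leaves $\{a,a+1,\dots,b\}$, which inverts the rule defining $g$ (the internal node between leaves $i-1$ and $i$ being numbered $i$).

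On the other side, $\Theta(L)$ is the underlying tree obtained from $L$ by forgetting levels; the internal nodes it produces are exactly the maximal consecutive blocks of the $t_j$'s that ``first appear at level $j$'', i.e.\ meet $S_j$: such a block $B$ is the set of spaces caught under a single internal node of $\Theta(L)$ (spaces immediately to either side of $B$ lie in strictly earlier levels, hence in $t_{j-1}$, so the node's territory is no larger than $B$, and no smaller because $B$ is a run of $t_j$), and the leaves spanned by that node are $B\cup\{(\min B)-1\}$. Two facts finish the identification: every maximal consecutive block of every $t_j$ arises, for a unique $j$, as such a ``new'' block (a block not new at level $j$ was already a maximal block of $t_{j-1}$, so by downward induction traces back to the level where it first appeared), and each new block corresponds to exactly one internal node of $\Theta(L)$ (several spaces of $S_j$ lying in one run of $t_j$ describe one node of higher arity, while spaces of earlier levels inside the run describe proper subtrees). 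Comparing with the ``drop the leftmost leaf'' convention of $g$, the tube that $g\bigl(\Theta(L)\bigr)$ assigns to that node is precisely $B$; hence $g\bigl(\Theta(L)\bigr)=\Theta_P(f(L))$, which is the assertion $g^{-1}\circ\Theta_P\circ f=\Theta$.

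I expect the only real difficulty to be the bookkeeping of the previous paragraph: making precise, from the conventions ``levels numbered top to bottom, the node between leaves $i-1$ and $i$ numbered $i$, node $i$ at level $\sigma(i)$'', exactly which subtree a level-$j$ node spans, and then verifying that maximal consecutive blocks of the $t_j$'s correspond \emph{bijectively}, not merely surjectively, to internal nodes of $\Theta(L)$, including the degenerate cases where a block $S_j$ is not a singleton. A cleaner way to organize this, should the direct check become unwieldy, is induction on $n$ through the facet structure: Theorem~\ref{t:facet} and Corollary~\ref{c:mfacet} present the facets of the permutohedron and of the associahedron as products of lower permutohedra and associahedra, $\Theta$ restricts on each facet of $\P_n$ to a product of lower-dimensional Tonks projections, and one checks that $\Theta_P$ restricts to the corresponding product of the lower $\Theta_{P'}$ and is compatible with the embeddings $\hat{\rho}$; the statement for $n$ then drops out of the inductive hypothesis applied to the factors, with the base cases $n\le 2$ immediate.
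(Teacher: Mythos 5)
Your proposal is correct and follows essentially the same route as the paper's proof: unwind $f$, observe that $\Theta_P$ replaces each tube of the nested chain by its maximal consecutive blocks, and match the resulting path-graph tubing under $g$ with the branching structure of the underlying tree. Your version is in fact more careful than the paper's about the bijection between blocks and internal nodes (the paper asserts the preservation of nesting and branching structure somewhat informally), and the inductive fallback via the facet decomposition is a reasonable alternative but not needed.
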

\begin{proof}
We begin with an ordered $j$-partition of $n$ drawn as a leveled tree $t$ with $n+1$ leaves numbered left to right,
 and $j$ levels
 numbered top to bottom.  The
 bijection $f$ tells us how to  draw $t$ as a tubing of the complete graph $K_n$ with numbered nodes.

 Consider the internal nodes of $t$ at level $i$.
In $f(t)$ there corresponds a tube $u_i$ containing the nodes of
$K_n$ with the same numbers as all the spaces between leaves of the
subtrees rooted at the internal nodes at level $i.$ The set in the
partition which is represented by the internal nodes at level $i$ is
the precise set of nodes of $u_i$ which are not contained by any
other tube in $f(t).$ The relative position of this subset of $[n]$
in our ordered partition is reflected by the relative nesting of the
tube $u_i$.

The map $\Theta_P$ has the following action on tubings. Let $u$ be a tube of $f(t).$
  We partition $u$ into subsets of consecutively numbered nodes such
 that no union of two subsets is consecutively numbered. Then the tubing $\Theta_P(f(t))$
contains the tubes given by these subsets.

Now we claim that the tree $g^{-1}(\Theta_P(f(t)))$ has
the same branching structure as $t$ itself. First, for any interior
node of $t$ there is a tube of consecutively numbered nodes of the
path graph, arising from the original tube of $f(t)$. This then
becomes a corresponding
 interior node of our tree,
 under the action of $g^{-1}$. Secondly, if any interior node of $t$ lies between the root of $t$
and a second interior node, then the same relation holds in the
tree between the corresponding interior nodes. This
follows from the fact that the action of any of our maps
$\Theta_{e_{i,k}}$ will preserve relative nesting. That is, for two
tubes $u \subset v$ we have that in the image of $\Theta_{e_{i,k}}$
any tube resulting from $u$ must lie within a tube resulting from
$v.$
\end{proof}

 To sum up,  there is a factorization of the Tonks cellular projection through various graph-associahedra. An example on an $n$-tubing is
 shown in Figure~\ref{f:tonks}, and another possible factorization of the projection in dimension 3 is demonstrated in Figure~\ref{f:slant}.

\begin{figure}[h]
\includegraphics[width=\textwidth]{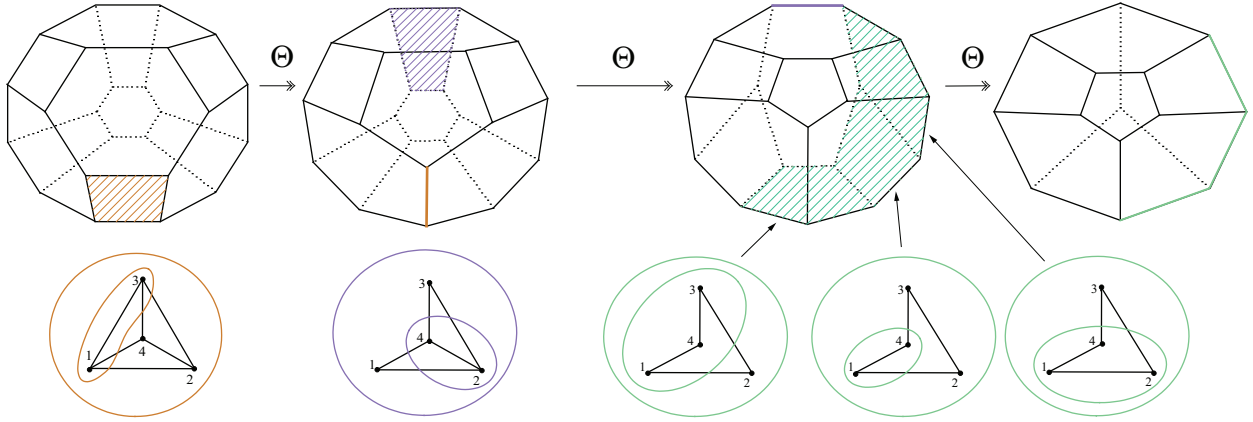}
\caption{A  factorization of the Tonks projection through 3 dimensional graph associahedra. The shaded facets
correspond to the shown tubings, and are collapsed as indicated to respective edges. The first, third and
fourth pictured polytopes are above views of $\P_4, \w_4$ and $\K_4$ respectively.} \label{f:slant}
\end{figure}

\subsection{Disconnected graph associahedra.}
The special case of extending the Tonks projection to graphs with
multiple connected components will be useful.
Consider a partition $S_1 \sqcup \dots \sqcup S_k$ of the
$n$ nodes of a connected graph $G$, chosen such that we have
connected induced subgraphs $G(S_i).$ Let $E_S$ be the set of edges
of G not included in any $G(S_i).$ Thus the graph $G-E_S$ formed by
deleting these edges will have the  $k$ connected components
$G(S_i).$
 In this situation
$\Theta_{E_S}$ will be a generalization of the Tonks projection to
the graph associahedron of a disconnected graph.

In Figure~\ref{f:disc} we show the extended Tonks projections in
dimension 2.

\begin{figure}[h]
                  \centering
                  \includegraphics[width=\textwidth]{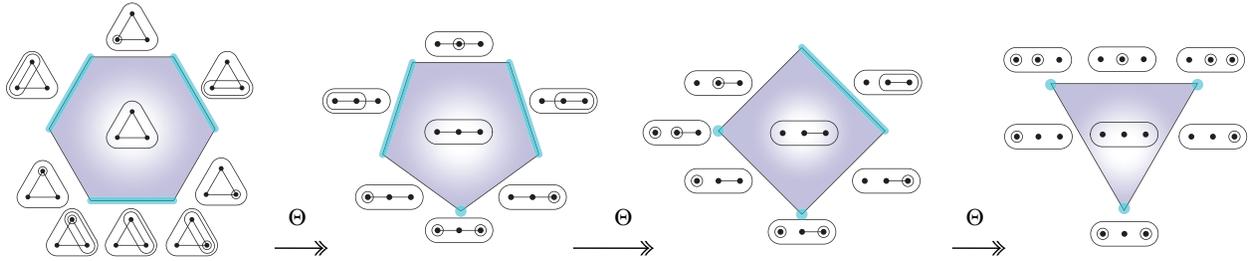}
                  \caption{Extending the Tonks projection to the 2-simplex.
 The highlighted edges are collapsed to the respective vertices. }\label{f:disc}
  \end{figure}

%
%

\section{Geometrical view of $\ssym$ and $\ysym$} \label{s:newview}
Before proving the graded algebra structures on graph associahedra
which our title promised, we motivate our point of view by showing
how it will fit with the well known graded algebra structures on
permutations and binary trees.
\subsection{Review of $\ssym$}
Let $\ssym$ be the graded vector space over $\Q$ with the $n^{th}$ component of its basis given by the
permutations $S_n.$ An element $\sigma \in S_n$ is given by its image $(\sigma(1),\dots,\sigma(n))$, often
without commas. We follow \cite{AS-MR} and \cite{AS-LR} and write $F_{u}$ for the basis element corresponding
to $u\in S_n$ and 1 for the basis element of degree 0. A graded Hopf algebra structure on $\ssym$ was
discovered by Malvenuto and Reutenauer  in \cite{MR}. First we review the product and coproduct and then show
a new way to picture those operations.

Recall that a permutation $\sigma$ is said to have a descent at
location $p$ if $\sigma(p) > \sigma(p+1).$ The $(p,q)$-shuffles of
$S_{p+q}$ are the $(p+q)$ permutations with at most one descent, at
position $p.$ We denote this set as $S^{(p,q)}.$
The product in $\ssym$ of two basis elements $F_u$ and $F_v$
for $u\in S_p$ and $v\in S_q$
is found by summing a term for each shuffle, created by composing the juxtaposition of $u$ and $v$ with the
inverse shuffle:
\begin{large}
$$
F_u\cdot F_v = \sum_{\iota \in S^{(p,q)}} F_{(u\times v)\cdot \iota^{-1}}.
$$
Here $u\times v$ is the permutation $(u(1),\dots,u(p),v(1)+p,\dots,v(q)+p)$.
\end{large}
\subsection{Geometry of $\ssym$}

The algebraic structure of $\ssym$ can be linked explicitly to the
recursive geometric structure of the permutohedra. In $\ssym$ we may
view our operands (a pair of permutations) as a vertex of the
cartesian product of permutohedra $\P_p \times \P_q.$ Then their
product is the formal sum of images of that vertex under the
collection of inclusions of $\P_p \times \P_q$ as a facet of
$\P_{p+q}.$ An example is in Figure~\ref{perm3}, where the product
is shown along with corresponding pictures of the tubings on the
complete graphs. To make this geometric claim precise we use the
facet isomorphism $\hat{\rho}_t$ which exists by
Theorem~\ref{t:facet}.

\begin{thm}\label{t:geo_pro}
The product in $\ssym$ of basis elements $F_u$ and $F_v$ for $u\in
S_p$ and $v\in S_q$ may be written:
\begin{large}
$$
F_u\cdot F_v =
\sum_{\iota\in S^{(p,q)}} F_{\hat{\rho}_{\iota}(u,v)}
$$
\end{large}
where  $\hat{\rho}_{\iota}$ is shorthand for $\hat{\rho}_{\iota([p])}.$
\end{thm}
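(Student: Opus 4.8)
The plan is to rewrite both sides of the asserted identity in terms of nested tubings on complete graphs and then to compare the resulting tubings node by node. Recall from Section~\ref{s:newview} that a permutation $\sigma\in S_n$ is encoded by its maximal nested chain of tubes $C_1\subset C_2\subset\cdots\subset C_n=[n]$ of the complete graph $K_n$, via $\sigma(i)=\min\{k: i\in C_k\}$. So first I would fix the chains $C_1^u\subset\cdots\subset C_p^u=[p]$ and $C_1^v\subset\cdots\subset C_q^v=[q]$ recording $u\in S_p$ and $v\in S_q$.

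Next I would identify the facets of $\P_{p+q}=\K K_{p+q}$ that are isomorphic to $\P_p\times\P_q$. By Theorem~\ref{t:facet} a facet of $\P_{p+q}$ is named by its unique non-universal tube $t$, and its face poset is the product of the graph associahedra of the induced subgraph $K_{p+q}(t)$ and the reconnected complement $(K_{p+q})^*(t)$. In a complete graph the former is $K_{|t|}$, and since every pair of nodes is adjacent the latter is $K_{p+q-|t|}$; hence the facet is $\P_p\times\P_q$ precisely when $|t|=p$. So these facets are indexed by the $p$-element subsets $t\subseteq[p+q]$, of which there are $\binom{p+q}{p}$ --- the same as the number of $(p,q)$-shuffles. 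The assignment $\iota\mapsto\iota([p])=\{\iota(1),\dots,\iota(p)\}$ is a bijection from $S^{(p,q)}$ onto these subsets, because a shuffle, being increasing on $\{1,\dots,p\}$ and on $\{p+1,\dots,p+q\}$, is recovered by sorting the chosen subset and its complement.

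The heart of the proof is the explicit evaluation of $\hat{\rho}_t(u,v)$ when $t=\iota([p])$. I would use the order-preserving identifications of the abstract node set $\{1,\dots,p\}$ of $\P_p$ with $t=\{\iota(1)<\cdots<\iota(p)\}$ (sending node $i$ to $\iota(i)$) and of $\{1,\dots,q\}$ of $\P_q$ with $[p+q]\setminus t=\{\iota(p+1)<\cdots<\iota(p+q)\}$ (sending node $j$ to $\iota(p+j)$). By the description of $\hat{\rho}_t$ given after Theorem~\ref{t:facet}, the image tubing consists of all the tubes of the $\P_p$-tubing together with the tubes of the $\P_q$-tubing each expanded by union with $t$; and since in a complete graph every union of tubes is again a tube, no tube is dropped. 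Writing $\iota(C_k^u)=\{\iota(i):i\in C_k^u\}$ and $\iota'(C_k^v)=\{\iota(p+j):j\in C_k^v\}$, the image is therefore the maximal nested chain
$$\iota(C_1^u)\subset\cdots\subset\iota(C_p^u)=t\subset t\cup\iota'(C_1^v)\subset\cdots\subset t\cup\iota'(C_q^v)=[p+q].$$
Reading off the encoded permutation gives $\hat{\rho}_t(u,v)(\iota(i))=u(i)$ for $i\le p$ (the minimal index $k$ with $\iota(i)$ in the $k$-th tube is $u(i)\le p$) and $\hat{\rho}_t(u,v)(\iota(p+j))=p+v(j)$ for $j\le q$ (here $\iota(p+j)\notin t$, so the minimal such index is $p+v(j)$). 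Comparing with $(u\times v)\cdot\iota^{-1}$, which sends $\iota(i)\mapsto(u\times v)(i)=u(i)$ and $\iota(p+j)\mapsto(u\times v)(p+j)=v(j)+p$, we get $\hat{\rho}_{\iota([p])}(u,v)=(u\times v)\cdot\iota^{-1}$. Summing over $\iota\in S^{(p,q)}$ then reproduces the Malvenuto--Reutenauer product, as desired.

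I expect the only genuine obstacle to be the bookkeeping: keeping consistent the three conventions involved --- the innermost-to-outermost numbering of a nested tubing, the order-preserving identifications of node sets built into $\hat{\rho}_t$, and the left-to-right indexing in $u\times v$ --- so that the node-by-node comparison above is unambiguous. It is also worth noting the dimension count $\dim(\P_p\times\P_q)=(p-1)+(q-1)=p+q-2=\dim(\text{a facet of }\P_{p+q})$, which guarantees that $\hat{\rho}_t$ carries vertices to vertices, so that the right-hand side is indeed a sum of basis elements of $\ssym$.
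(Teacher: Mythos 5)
Your proposal is correct and follows essentially the same route as the paper: identify the facets $\P_p\times\P_q\hookrightarrow\P_{p+q}$ with $p$-subsets $\iota([p])$, describe the image tubing $\hat{\rho}_{\iota}(u,v)$ as the nested chain $\overline{\iota(u)}\cup\overline{\hat{\iota}(v)}$, and check that it encodes $(u\times v)\cdot\iota^{-1}$. The only difference is that you carry out the node-by-node verification of this last identification explicitly, where the paper simply asserts it after recalling the definition of $\hat{\rho}$.
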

\begin{figure}[h]
                  \centering
                  \includegraphics[width=\textwidth]{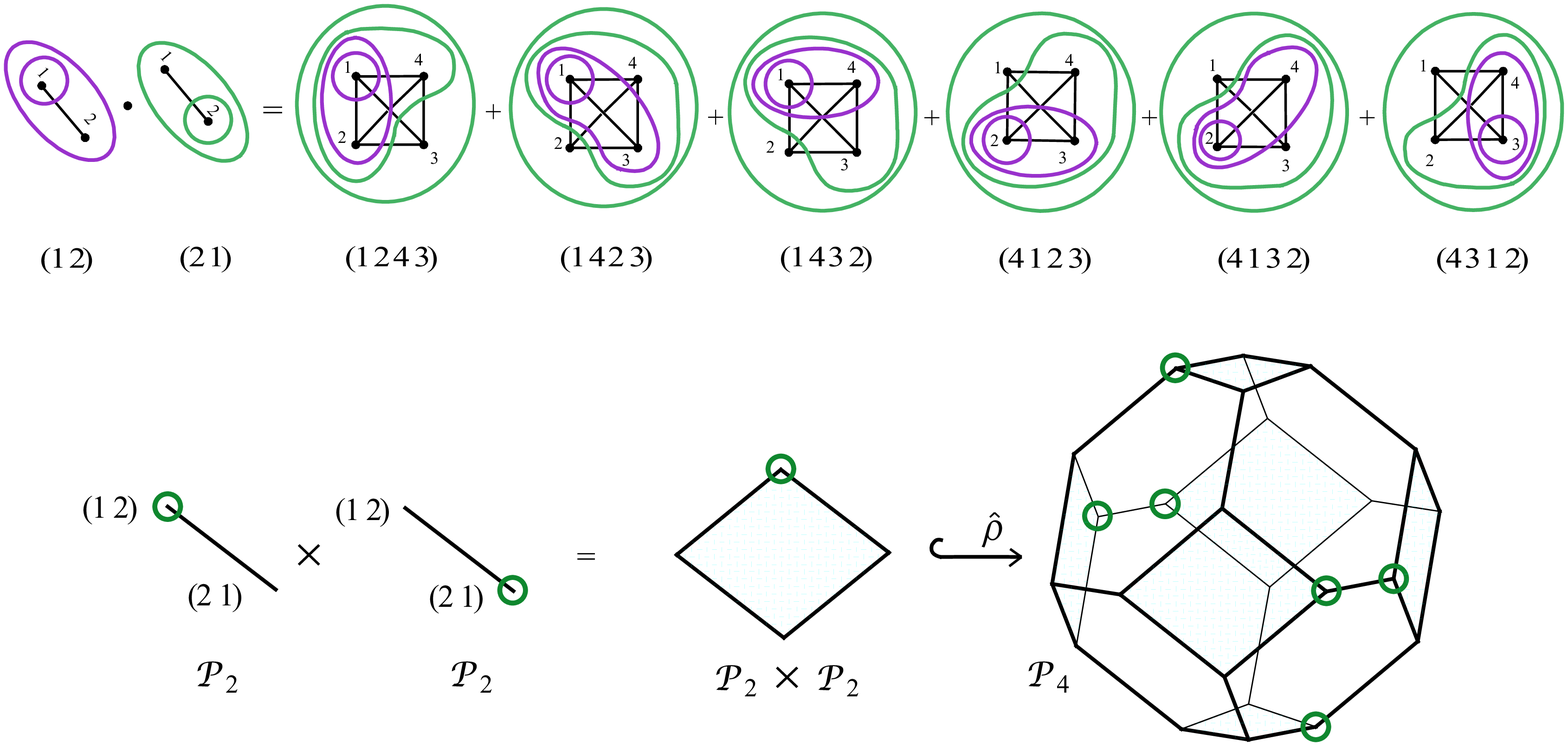}
                  \caption{The product in $\ssym$. Here (and similarly in all
                  our pictorial examples) we let the picture of the
                  graph tubing $u$ stand in for the basis element $F_u.$
                   In the picture of $\P_4$ the circled vertices from bottom to top are
                  the permutations in the product, as listed from left to right above.}\label{perm3}
  \end{figure}
\begin{proof}
 From Theorem~\ref{t:facet} we have that for each
tube $t$ of the graph, the corresponding facet of $\KG$ is
isomorphic to $\K\rec{t} \times \KG(t).$ In the case of the complete
graph $G = K_{p+q}$, for any tube $t$ of $p$ nodes, we have the
facet inclusion $\hat{\rho}_t:\P_q \times \P_p \to \P_{p+q}.$

We just need to review the definition of the isomorphism $\hat{\rho}$ from the proof of \cite[Theorem 2.9]{dev-carr},   and point out that
the permutation  associated to the tubing $\hat{\rho}_{\iota}(u,v)$ is indeed $(u\times v)\cdot\iota^{-1}$.

Given a shuffle $\iota$,  the $(p+q)$-tubing $\hat{\rho}_{\iota}(u,v)$ on $K_{p+q}$ is given
by including for each tube $t$ of $u$ the
 tube with nodes the
image $\iota(t).$  Denote the resulting set of tubes by
$\overline{\iota(u)}.$  For each tube $s$ of $v$ we include the tube
formed by $\iota([p])\cup\hat{\iota}(s),$ where $\hat{\iota}(i) = \iota(i+p).$ We denote the resulting set of
tubes as $\overline{\hat{\iota}(v)}.$

Now the tubing $\hat{\rho}_{\iota}(u,v)$ is defined to be
$\overline{\iota(u)}\cup\overline{\hat{\iota}(v)}.$ This tubing is precisely the complete tubing which
represents the permutation  $(u\times v)\cdot\iota^{-1}$.
\end{proof}

To sum up, in our view of $\ssym$ each permutation is pictured as a
tubing of a complete graph with numbered nodes.
 Since any subset of the nodes of
a complete graph spans a smaller complete graph, we can draw the terms of the
 product in $\ssym$ directly. Choosing a $(p,q)$-shuffle
is accomplished by choosing $p$ nodes of the $(p+q)$-node complete
graph $K_{p+q}$. First the permutation $u$ (as a $p$-tubing)
 is drawn upon the induced
 $p$-node
complete subgraph, according to the ascending order of the chosen nodes.
 Then the permutation $v$ is drawn upon the subgraph induced by the remaining $q$ nodes--with a caveat.
Each of the tubes of $v$ is expanded to also include the $p$ nodes
that were originally chosen.
 This perspective will generalize nicely
to the other graphs and their graph associahedra.

 In \cite{AS-MR} and \cite{AS-LR}
 the authors give a related geometric interpretation
of the products of $\ssym$ and $\ysym$ as expressed in the M\"obius basis.
 An interesting project for the future would be to apply that point of view to $\wsym.$


\begin{rema} \label{r:geo_cop}
  The coproduct of $\ssym$ can also be described geometrically in terms of the graph tubings.
The coproduct is usually described as a sum of all the ways of
splitting a permutation $u\in S_n$ into two permutations $u_i \in
S_i$ and $u_{n-i} \in S_{n-i}$:
\begin{large}
$$
\Delta (F_u) = \sum_{i=0}^{n}F_{u_i}\otimes F_{u_{n-i}}
$$
\end{large}
 where $u_i = (u(1)\dots u(i))$ and  $u_{n-i} = (u(i+1)-i\dots u(n)-i).$

Given an $n$-tubing $u$ of the
  complete graph on $n$ vertices we can find $u_i$ and $u_{n-i}$ just by restricting to the sub graphs (also complete) induced by
the nodes $1,\dots, i$ and $i+1,\dots, n$ respectively. For each tube $t\in u$ the two intersections of $t$ with the respective subgraphs
are included in the respective tubings $u_i$ and $u_{n-i}.$
An example is in Figure~\ref{permcop}.

Notice that this restriction of the tubings to subgraphs is the same as the result of performing the Tonks projection. Technically,
$(u_i,u_{n-i}) = \eta(\Theta_{E_i}(u)),$ where
$$E_i = \{e ~ an~edge ~ of ~G ~|~e \text{ connects a node $j\le i$ to a node $j' > i$}\}.$$
 and $\eta$ is from Lemma~\ref{l:comp}.
\end{rema}
\begin{figure}[h]
                  \centering
                  \includegraphics[width=\textwidth]{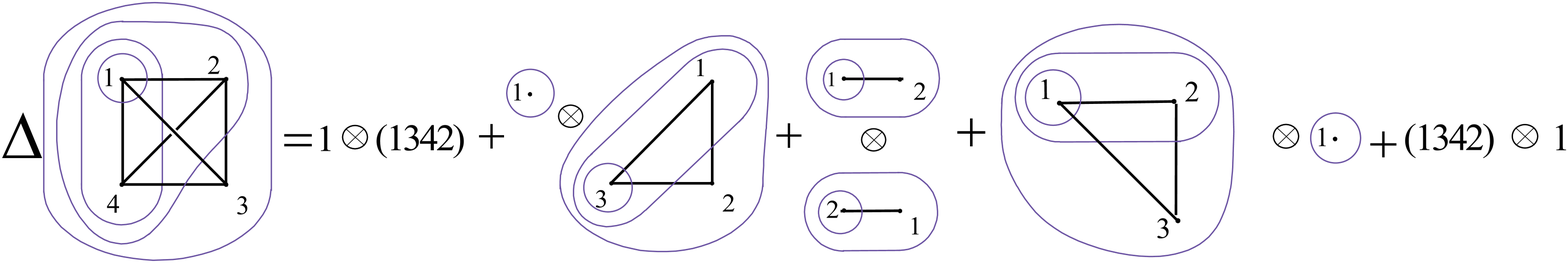}
                  \caption{The coproduct in $\ssym.$}\label{permcop}
                   $$\Delta F_{(1 3 4 2)}= 1\otimes F_{(1342)}+F_{(1)}\otimes F_{(231)}+F_{(12)}\otimes F_{(21)}+F_{(123)}\otimes F_{(1)}+F_{(1342)}\otimes1.$$
  \end{figure}

\subsection{Review of $\ysym$}
  The product and coproduct of  $\ysym$ are described by Aguiar and Sottile in terms of
splitting and grafting binary trees \cite{AS-LR}. We can vertically split a tree into
 smaller trees at each leaf from top to bottom--as if
a lightning strike
    hits a leaf and splits the tree along the path to the root. We graft trees by attaching
    roots to leaves (without creating a new interior node at the graft.) The product
     of two trees with $n$ and $m$ interior nodes (in that order)
     is a sum of
    ${{n+m} \choose n}$ terms, each with $n+m$ interior nodes.
    Each is achieved by vertically splitting the first tree into
     $m+1$ smaller trees and then grafting them to the second tree, left to right.
A picture is in Figure~\ref{map24a}.

\subsection{Geometry of $\ysym$}\label{s:geomy}

Since Loday and Ronco demonstrated that the Tonks projection, restricted to vertices, gives rise to an
algebra homomorphism $\bs{\tau}:\ssym\to\ysym,$ it is no surprise that the processes of splitting and
grafting have geometric interpretations. Grafting corresponds to certain face inclusions of associahedra, and
splitting corresponds to the extension of Tonks's projection to disconnected graphs. To see the latter, note
that splitting at leaf $i$ is the same as deleting the edge from node $i$ to node $i+1.$ Thus the product in
$\ysym$ can be described with the language of path graph tubings, using a combination of facet inclusion and
the extended Tonks projection.  An example is shown in Figure~\ref{map24a}.

 Let $U$ be the path graph with $p$-tubing $u$
and $V$ the path graph with $q$-tubing $v.$ Given a shuffle $\iota\in S^{(p,q)}$ we can
 partition the nodes of $U$ into
the preimages $s_1,\dots,s_k$ of the connected components
$\iota(s_1),\dots,\iota(s_k)$ of the possibly disconnected subgraph
induced by the nodes $\iota([p])$ on the $(p+q)$-path. Let
$E_{\iota}$ be the edges of $U$ not included in the subgraphs
$U(s_i)$ induced by our partition. For short we denote the extended
Tonks projection $\Theta_{E_{\iota}}$ as simply $\Theta_{\iota}.$
Now $\Theta_{\iota}(u)$ is the projection of $u$ onto the possibly
disconnected graph  $U-E_{\iota}.$
 Recall from Lemma~\ref{l:comp} that a vertex of
$\K(U-E_{\iota})$ may be mapped to a vertex of $\K
U(s_1)\times\dots\times\K U(s_k).$ We call this map $\eta_{\iota}.$

\begin{thm}\label{r:geo}
The product in $\ysym$ can be written:
\begin{large}
$$
F_u\cdot F_v = \sum_{\iota \in S^{(p,q)}}F_{\hat{\rho}_{\iota}(\eta_{\iota}(\Theta_{\iota}(u)),v)}.
$$
\end{large}
 Where  $\hat{\rho}_{\iota}$ is shorthand notation for the isomorphism $\hat{\rho}_{\iota(s_1)\dots
\iota(s_k)}$ from Corollary~\ref{c:mfacet}.
\end{thm}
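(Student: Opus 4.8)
The plan is to show that the two descriptions of the product in $\ysym$ — the combinatorial one (vertically split the first tree into $q+1$ pieces, graft onto the second tree) and the geometric one on the right-hand side of the statement — agree term by term, indexed by $(p,q)$-shuffles. First I would recall the known fact, due to Loday and Ronco and reinterpreted by Aguiar–Sottile, that $F_u \cdot F_v = \sum_{\iota} F_{\tau(\,(\bar u \times \bar v)\cdot \iota^{-1}\,)}$, where $\bar u, \bar v$ are any leveled trees (permutations) lifting $u, v$ and $\tau$ forgets levels; equivalently, $\bs{\tau}$ being an algebra map means we can compute the product of trees by lifting to permutations, multiplying in $\ssym$ via Theorem~\ref{t:geo_pro}, and applying $\tau$ to each term. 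So the task reduces to: for each shuffle $\iota \in S^{(p,q)}$, the tree $\tau$ of the permutation $\hat{\rho}_{\iota([p])}(\bar u, \bar v)$ on $K_{p+q}$ equals the path-graph tubing $\hat{\rho}_{\iota(s_1)\dots\iota(s_k)}\bigl(\eta_{\iota}(\Theta_{\iota}(u)), v\bigr)$.

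The key step is a compatibility diagram relating $\Theta$ to the two $\hat{\rho}$'s. By Theorem~\ref{t:ftonks}, $\tau$ (i.e. $\Theta$ restricted to vertices) is $g^{-1}\circ \Theta_P \circ f$, where $P$ deletes every non-path edge of the complete graph. I would factor $\Theta_P$ through the partially-deleted graph $G - E_{\iota}'$, where $E_{\iota}'$ is the set of complete-graph edges that, after applying $\iota^{-1}$, do not lie within a single block $\iota(s_j)$ or within the $v$-part; by Lemma~\ref{l:comm} the order of deletions does not matter. On the $u$-part, deleting these edges and then restricting (via $\eta$) exactly reproduces $\eta_{\iota}(\Theta_{\iota}(u))$, because deleting an edge of $K_{p+q}$ inside $\iota([p])$ corresponds under $\iota^{-1}$ to deleting the matching edge of the path $U$, which is precisely the definition of $E_{\iota}$ and the splitting of $u$. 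On the $v$-part, the tubes of $\bar v$ were expanded in $\hat{\rho}_{\iota([p])}$ to contain all of $\iota([p])$; after we further delete edges so that $\iota([p])$ breaks into its connected components $\iota(s_1),\dots,\iota(s_k)$, each expanded $v$-tube is expanded instead only by the relevant components — which is exactly what $\hat{\rho}_{\iota(s_1)\dots\iota(s_k)}$ from Corollary~\ref{c:mfacet} prescribes. Finally, the remaining edge-deletions turn the ambient graph into a path, so $g^{-1}$ converts the resulting tubing to a binary tree, matching the right-hand side.

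I would organize this as: (1) invoke $\bs{\tau}$ an algebra map to lift to the $\ssym$ product; (2) fix $\iota$ and trace $\Theta_P$ applied to $\hat{\rho}_{\iota([p])}(\bar u,\bar v)$, using Lemma~\ref{l:comm} to reorder deletions so that the "within $\iota([p])$" deletions come first; (3) identify the intermediate tubing with $\hat{\rho}_{\iota(s_1)\dots\iota(s_k)}(\Theta_{\iota}(u)', v)$ living on the disconnected-then-completed graph, invoking Corollary~\ref{c:mfacet} and the explicit description of $\hat{\rho}$; (4) identify $\Theta_{\iota}(u)'$ with $\eta_{\iota}(\Theta_{\iota}(u))$ using Lemma~\ref{l:comp} and the correspondence between path-edge deletion and tree-splitting from Section~\ref{s:geomy}; (5) apply $g^{-1}$. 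The main obstacle I anticipate is the bookkeeping in step (3): one must check that the tube-expansion in $\hat{\rho}_{\iota([p])}$ followed by breaking $\iota([p])$ into components genuinely coincides with the iterated reconnected-complement expansion of Corollary~\ref{c:mfacet} — i.e. that a tube $\iota([p]) \cup \hat{\iota}(s)$ of $\bar v$, after the block-breaking deletions, becomes exactly $\bigl(\bigcup_{j \in J(s)} \iota(s_j)\bigr) \cup \hat{\iota}(s)$ for the appropriate index set $J(s)$, with no spurious merging across far-apart components. This is where the "far apart" hypothesis in Corollary~\ref{c:mfacet} is doing real work, and where I'd expect to spend most of the verification effort; once that is pinned down, steps (4)–(5) are routine given the dictionary already established in the excerpt.
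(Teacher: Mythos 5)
Your proposal is correct in outline but takes a genuinely different route from the paper. The paper proves Theorem~\ref{r:geo} directly: it rewrites the Loday--Ronco split-and-graft description of a single term of $F_u\cdot F_v$ in the language of path-graph tubings (splitting at leaf $i$ corresponds to deleting the edge between nodes $i$ and $i+1$; grafting corresponds to expanding the tubes of $v$ by adjacent tubes of $\overline{\iota(u)}$), and then simply observes that the resulting tubing $\overline{\iota(u)}\cup\overline{\hat{\iota}(v)}$ is by definition $\hat{\rho}_{\iota}(\eta_{\iota}(\Theta_{\iota}(u)),v)$. You instead lift to $\ssym$, invoke the Loday--Ronco theorem that $\bs{\tau}$ is an algebra map together with Theorem~\ref{t:geo_pro}, and reduce to the term-by-term commutation $\tau(\hat{\rho}_{\iota}(\bar u,\bar v))=\hat{\rho}_{\iota}(\eta_{\iota}(\Theta_{\iota}(u)),v)$, proved by reordering edge deletions via Lemma~\ref{l:comm}. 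This is not circular (the homomorphism property is an external fact about the standard product, and Theorems~\ref{t:geo_pro} and~\ref{t:ftonks} are already available), and the commutation relation you need is essentially the equation $\Theta(\overline{\iota(u)}\cup\overline{\hat{\iota}(v)})=\overline{\iota(\Theta(u))}\cup\overline{\hat{\iota}(\Theta(v))}$ that the paper proves later, in the cyclohedron setting, as Theorem~\ref{t:permhom} --- so you are front-loading work the paper defers. What the paper's direct route buys is the explicit dictionary (splitting $=\Theta_{\iota}$ followed by $\eta_{\iota}$, grafting $=\hat{\rho}_{\iota}$) that is then reused verbatim for $\wsym$; what your route buys is that associativity and the homomorphism $\bs{\tau}$ come for free. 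You have correctly identified the one place where real verification is needed: checking that breaking the expanded tube $\iota([p])\cup\hat{\iota}(s)$ into path components yields the tube $\hat{\iota}(s)$ enlarged by exactly the adjacent blocks $\iota(s_j)$, with the far-away blocks reappearing as tubes already present in $\overline{\iota(u)}$ (coming from the universal tube of $u$), so that no tubes are lost or duplicated; that bookkeeping must be written out for the argument to be complete.
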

\begin{figure}[h]
                 \centering
              \includegraphics[width=\textwidth]{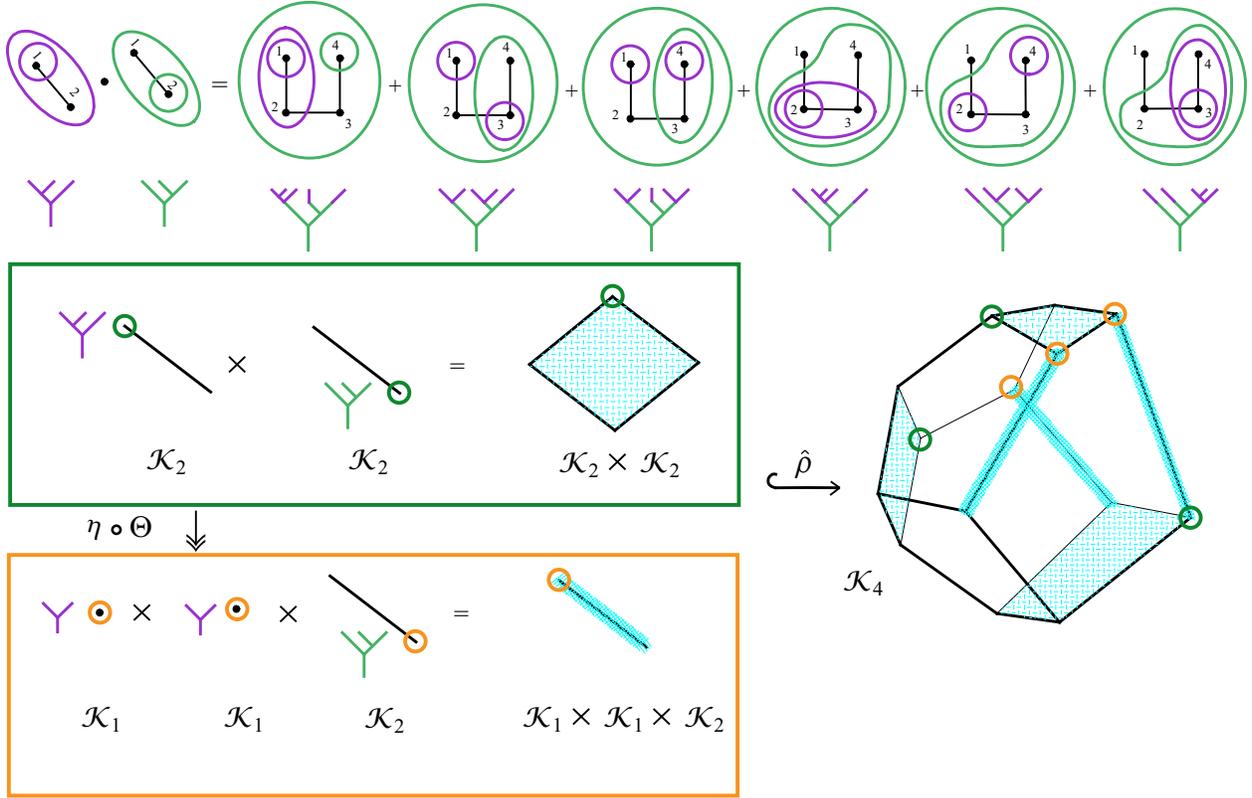}
                \caption{The product in $\ysym$. The circled vertices
                of $\K_4$ which are at the upper end of highlighted
                edges are the fifth, third and second terms of the
                product, in that order respectively from bottom to top in the
                picture.
                 }\label{map24a}
\end{figure}
\begin{proof}
We will explain how the splitting and grafting of trees in a term of the product
 may be put into the language of tubings, and then argue that the term thus described
is indeed the image of projections and inclusions as claimed.
 The product in $\ysym$ of two basis elements $F_u$ and $F_v$
for $u\in \y_p$ and $v\in \y_q$ is found by summing a term for each
shuffle $\iota \in S^{(p,q)}.$ We draw $u$
and $v$ in the form of tubings on path graphs of $p$ and $q$ nodes,
respectively.

Here is the non-technical description: first the $p$-tubing $u$
 is drawn upon the induced
subgraph of the nodes $\iota([p])$ according to the ascending order of the chosen nodes. However, each tube
may need to be first broken into several sub-tubes.
 Then the $q$-tubing $v$ is drawn upon the subgraph induced by the remaining $q$ nodes. In this last step,
each of the tubes of $v$ is expanded to also include any of the previously drawn tubes that its nodes are adjacent to.

To be precise, we first choose a shuffle
 $\iota \in S^{(p,q)}.$ Let $\hat{\iota}(i) = \iota(i+p).$ Our term of  $F_u\cdot F_v$ is the
$(p+q)$-tubing on the $(p+q)$-path given by the following:

First for each tube
$t \in u$ we include in our new tubing the tubes which are the connected components of the subgraph induced by the nodes $\iota(t).$
 Let $\overline{\iota(u)}$
denote the tubing constructed thus far. After this step in terms of trees, we have performed the
 splitting and chosen where to graft the (non-trivial) subtrees. In terms of trees the splitting occurs at leaves
labeled by $\hat\iota(i)-i$  for $i\in [q].$

 Second, for each tube $s \in v$ we include in our term of the product
 the tube formed by
$$\hat{\iota}(s) \cup \{j \in t' \in \overline{\iota(u)} ~|~ t'
\text{ is adjacent to } \hat{\iota}(s) \}.$$ Let
$\overline{\hat{\iota}(v)}$ denote the tubes added in this second
step. Now we have completed the grafting operation.

Now we just point out that
 $\hat{\rho}_{\iota}(\eta_{\iota}(\Theta_{\iota}(u)) ,v)$  is precisely
the same as $\overline{\iota(u)}\cup\overline{\hat{\iota}(v)}$ . The
splitting of tubes of $u$ is accomplished by $\Theta_{\iota}$; and
$\eta_{\iota}$ simply recasts the result as an element of the
appropriate cartesian product. Then $\hat{\rho}_{\iota}$, as defined
in \cite{dev-carr}, performs the inclusion of that element paired
together with $v.$
\end{proof}

To summarize, the product in
$\ysym$ can be seen as a process of splitting and grafting, or equivalently of projecting and including.
From the latter viewpoint, we see the product of two associahedra vertices being achieved by projecting the first onto
a cartesian product of smaller associahedra, and then mapping that result paired with the second operand
into a large associahedron
via face inclusion. Notice that the reason
 the second path graph tubing $v$ can be input here is that any reconnected complement of a path graph
is another path graph.

\begin{rema}\label{r:y_cop}
The coproduct of $\ysym$ can also be described geometrically in
terms of the graph tubings. The coproduct is usually described as a
sum of all the ways of splitting a binary tree $u\in \y_n$ along
leaf $i$ into two trees: $u_i \in \y_i$ and $u_{n-i} \in \y_{n-i}$:
\begin{large}
$$
\Delta (F_u) = \sum_{i=0}^{n}F_{u_i}\otimes F_{u_{n-i}}.
$$
\end{large}

 Given an $n$-tubing $u$ of the
  path graph on $n$ vertices we can find $u_i$ and $u_{n-i}$ just by restricting to the sub graphs (also paths) induced by
the nodes $1,\dots, i$ and $i+1,\dots, n$ respectively. For each tube $t\in u$ the two intersections of $t$ with the respective subgraphs
are included in the respective tubings $u_i$ and $u_{n-i}.$
An example is in Figure~\ref{assocop}.

Notice that this restriction of the tubings to subgraphs is the same as the result of performing the extended Tonks projection, just as described
in Remark~\ref{r:geo_cop}.
\end{rema}

\begin{figure}[h]
                  \centering
                  \includegraphics[width=\textwidth]{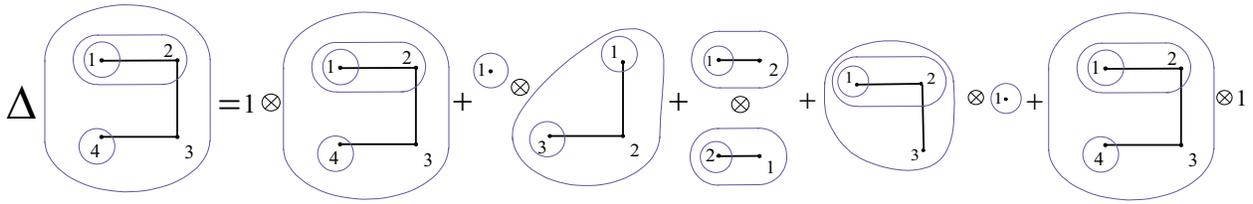}
                  \caption{The coproduct in $\ysym.$ }\label{assocop}
  \end{figure}

%
%

\section{The algebra of the vertices of cyclohedra}\label{s:cyclo}
Recall that the $(n-1)$-dimensional cyclohedron $\mathcal{W}_n$ has
vertices which are indexed by $n$-tubings on the cycle graph of
$n$-nodes. We will define a graded algebra with a basis which
corresponds to the vertices of the cyclohedra, and whose grading
respects the dimension (plus one) of the cyclohedra.
\begin{defi}
Let $\wsym$ be the graded vector space over $\Q$ with the $n^{th}$
component of its basis given by the $n$-tubings on the cycle graph
of $n$ cyclically numbered nodes. By $W_n$ we denote the set of
$n$-tubings on the cycle graph $C_n.$ We write $F_{u}$ for the basis
element corresponding to $u\in W_n$ and 1 for the basis element of
degree 0.
\end{defi}

\subsection{Graded algebra structure}
Now we demonstrate a product which respects the grading on $\wsym$
by following the example described above for $\ssym.$
 The product in $\wsym$ of two basis elements $F_u$ and $F_v$
for $u\in W_p$ and $v\in W_q$ is found by summing a term for each
shuffle $\iota\in S^{(p,q)}.$ First the
$p$-tubing $u$
 is drawn upon the induced
subgraph of the nodes $\iota([p])$ according to the ascending order of the chosen nodes. However, each tube
may need to be first broken into several sub-tubes, since the induced graph on the nodes $\iota([p])$ may
have connected components $\iota(s_1),\dots,\iota(s_k)$ (as described in Section~\ref{s:geomy}).
 Then the $q$-tubing $v$ is drawn upon the subgraph induced by the remaining $q$ nodes. However,
each of the tubes of $v$ is expanded to also include any of the previously drawn tubes that its nodes are adjacent to.

\begin{defi}\label{d:cyc_prod}
\begin{large}
$$
F_u\cdot F_v = \sum_{\iota \in S^{(p,q)}}F_{\hat{\rho}_{\iota}(\eta_{\iota}(\Theta_{\iota}(u)),v)}.
$$
\end{large}

Where  $\hat{\rho}_{\iota}$ is shorthand notation for the isomorphism $\hat{\rho}_{\iota(s_1)\dots
\iota(s_k)}$ from Corollary~\ref{c:mfacet}. Also 1 is a two-sided unit.
\end{defi}
An example is shown in Figure~\ref{permcyc}. For an example of finding a term in the product given a
specific shuffle $\iota,$ see the second part of Figure~\ref{f:hom}.

\begin{thm}\label{t:biggy}
The product we have just described makes $\wsym$ into an associative
graded algebra.
\end{thm}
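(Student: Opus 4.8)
The plan is to prove associativity of $\wsym$ by exploiting the fact that the product formula in Definition~\ref{d:cyc_prod} is formally identical to the one for $\ysym$ in Theorem~\ref{r:geo}, and that the only difference lies in which graph associahedra appear as intermediate and ambient polytopes. First I would make precise the sense in which the cyclohedron product is ``inherited'' from the permutohedron product: given $u \in W_p$ and $v \in W_q$, one may lift them to $p$-tubings and $q$-tubings of complete graphs (adding the missing edges), but this is not canonical, so instead I would work directly with the tubing description. The key structural observation is that every reconnected complement of a cycle graph with respect to a tube is again a path graph (or a disjoint union involving a path), so that the polytopes $\K\rec{t}$ appearing in $\hat{\rho}_\iota$ for $G = C_{p+q}$ are exactly path graph associahedra — this is what makes the $v$-operand (a cycle tubing) fit, because in the product the \emph{second} operand is never broken, only expanded, while the \emph{first} operand $u$ is projected via $\Theta_\iota$ onto a disjoint union of smaller graphs which, crucially, are themselves induced cycle-or-path subgraphs.

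The core of the argument is to expand both $(F_u \cdot F_v)\cdot F_w$ and $F_u\cdot(F_v\cdot F_w)$ and exhibit a bijection between their indexing sets that matches terms. For $u\in W_p$, $v\in W_q$, $w\in W_r$, both sides should be indexed by pairs of shuffles, or equivalently by the set of ``triple shuffles'' — ways of interleaving three blocks of sizes $p,q,r$ into $[p+q+r]$, which is the same as choosing an ordered set partition refinement, counted by the multinomial coefficient. Concretely: a term of $(F_u\cdot F_v)\cdot F_w$ is given by first choosing $\iota\in S^{(p,q)}$ to interleave $u,v$ into a tubing on $C_{p+q}$, then choosing $\kappa\in S^{(p+q,r)}$ to interleave that with $w$ into a tubing on $C_{p+q+r}$; a term of the other side chooses first $\iota'\in S^{(q,r)}$ then $\kappa'\in S^{(p,q+r)}$. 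The standard bijection $S^{(p,q)}\times S^{(p+q,r)} \cong S^{(p,q,r)} \cong S^{(q,r)}\times S^{(p,q+r)}$ on shuffles is classical (it underlies associativity of $\ssym$); what must be checked is that under this bijection the resulting $(p+q+r)$-tubings on $C_{p+q+r}$ coincide. I would verify this by tracking, for each tube of $u$, of $v$, and of $w$, exactly which nodes of $C_{p+q+r}$ end up in its image under each bracketing — appealing to the facts already established that $\Theta_e$ operations commute (Lemma~\ref{l:comm}), that $\hat\rho$ with respect to a union of far-apart tubes is iterated $\hat\rho$ (Corollary~\ref{c:mfacet}), and that $\Theta_{e}$ preserves relative nesting (from the proof of Theorem~\ref{t:ftonks}).

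The main obstacle I anticipate is bookkeeping the interaction between the ``expansion'' step (each tube of the later operand absorbs adjacent already-drawn tubes) and the ``breaking'' step (each tube of the earlier operand is split into the connected components it induces on the chosen node subset). On one side $v$ is first expanded against $u$ and then the combined $v$-tubes are expanded against $w$; on the other side $v$ is first broken by the $(q,r)$-shuffle before being expanded against $w$, and only afterwards do the $u$-nodes enter. I would argue that both procedures produce, for a tube $s\in v$, the tube consisting of $\hat\kappa\hat\iota(s)$ together with every node $j$ of $C_{p+q+r}$ lying on a ``bridge'' of consecutive positions connecting $s$'s image to an image of some tube of $u$ or $w$ — a description manifestly symmetric in the order of operations. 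Once the indexing-set bijection and this node-tracking identity are in place, associativity follows, and the unit axiom for $1$ is immediate since the empty shuffle is the unique one and $\hat\rho$, $\eta$, $\Theta$ all act trivially. I would also remark that this argument is essentially the same as the one Loday and Ronco use for $\ysym$ — indeed one could alternatively deduce associativity of $\wsym$ by realizing it as a sub-phenomenon of the associativity of $\ssym$ via the algebra map $\hat\Theta$ promised in the introduction — but a direct tubing-theoretic proof is cleaner and sets up the face-algebra extension in Theorem~\ref{t:wface}.
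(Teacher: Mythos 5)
Your key structural observation is backwards, and this reverses the actual mechanism that makes the product well defined. For $G = C_{p+q}$ and chosen nodes $\iota([p])$, the \emph{induced subgraphs} $C_{p+q}(\iota(s_i))$ are the paths (arcs), while the \emph{reconnected complement} $\rec{\iota([p])}$ is again a cycle, namely $C_q$ --- this is exactly the special property of cycle graphs that the paper isolates. It is because $\K\rec{\iota([p])} = \w_q$ is a \emph{cyclohedron}, not a path associahedron, that the second operand $v \in W_q$ fits unbroken into the last factor of $\hat{\rho}_{\iota}$; a cycle tubing (for instance one containing the arc $\{q,1\}$) is not a face of the path graph associahedron at all. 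Dually, the first operand $u$ must be broken by $\Theta_{\iota}$ precisely because the induced subgraphs on $\iota([p])$ are paths. With the roles of path and cycle interchanged, your justification for why each term of the product is a valid $(p+q)$-tubing of $C_{p+q}$ does not go through, and this well-definedness check --- which the paper carries out as a six-case compatibility analysis of the tubes in $\overline{\iota(u)}$ and $\overline{\hat{\iota}(v)}$, with the hardest case resting exactly on $\rec{\iota([p])} \cong C_q$ --- is the bulk of the theorem. It cannot be waved through by analogy with $\ysym$, since for a general graph sequence the analogous product need not land in the basis.

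For associativity itself, your direct triple-shuffle bijection is a genuinely different route from the paper, which instead proves that $\hat{\Theta}_c : \ssym \to \wsym$ is a surjective algebra homomorphism (Theorem~\ref{t:permhom}) and then inherits associativity from $\ssym$ --- the alternative you mention only in passing. Your direct approach could in principle be made to work, but you leave its hardest point unresolved: the interaction between breaking and expanding across the two bracketings (on one side the already-expanded $v$-tubes must be broken again by the outer shuffle; on the other, $v$ is broken by the $(q,r)$-shuffle before it ever meets $u$), and your proposed ``bridge'' description of the resulting tubes is asserted rather than verified. As written, the proposal therefore contains both a concrete error (the path/cycle reversal) and an incomplete core argument; the cleanest repair is to prove the homomorphism property first, as the paper does, and let associativity follow.
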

\begin{proof}
Given a shuffle $\iota$ we will use the fact that
$$\hat{\rho}_{\iota}(\eta_{\iota}(\Theta_{\iota}(u)) ,v) = \overline{\iota(u)}\cup\overline{\hat{\iota}(v)}$$
 as defined in the proof of Theorem~\ref{r:geo}.
First we must check that the result of the product is indeed a sum
of valid $(p+q)$-tubings of the cycle graph $C_{p+q}.$ We claim that
in each term the new tubes we have created are pairwise compatible.
This being shown, we will be able to deduce that since $p+q$ tubes
were used in the construction, then the resulting term will
necessarily have $p+q$ tubes as well. To check our claim we compare
pairs of tubes in one or both of $\overline{\iota(u)}$ and
$\overline{\hat{\iota}(v)}.$ There are six cases:
\begin{enumerate}
\item
By our method of construction, any tube of $\overline{\iota(u)}$ is
either nested within some of or
 far apart from all of
the tubes of  $\overline{\hat{\iota}(v)}.$
\item
If two tubes of $\overline{\iota(u)}$ are made up of nodes in $\iota(t)$ and $\iota(t')$ respectively for
nested tubes $t$ and $t'$ of $u$ then they will be similarly nested.
\item
 Two tubes from $\overline{\iota(u)}$ might both be made up of nodes in $\iota(t)$
for a single tube $t$ of $u.$ In that case they are guaranteed to be
far apart, since their respective nodes together cannot be a
consecutive string (mod $p+q$).
\item
If two tubes of $\overline{\iota(u)}$ are made up of nodes in
$\iota(t)$ and $\iota(t')$ respectively for far apart tubes $t$ and
$t'$ of $u,$ then we claim they will be far apart. This is true
since if two nodes $a$ and $b$ are nonadjacent in the cycle graph
$C_p$ then the two nodes $\iota(a)$ and $\iota(b)$ will be
nonadjacent in $C_{p+q}.$
\item
If two tubes of $\overline{\hat{\iota}(v)}$ contain some nodes in $\hat{\iota}(t)$ and $\hat{\iota}(t')$
respectively for nested tubes $t$ and $t'$ of $u$ then they will be similarly nested. This follows from the
fact that $\iota(t)$ will only be adjacent to nodes that $\iota(t')$ is also adjacent to.
\item
Finally, if two tubes of $\overline{\hat{\iota}(v)}$ contain some
nodes in $\hat{\iota}(t)$ and $\hat{\iota}(t')$ respectively for far
apart tubes $t$ and $t'$ of $u,$ then we claim they will be far
apart. This final case depends on a special property which the cycle
graphs exemplify.
 Given any subset of $k$ of the nodes of $C_n$, the reconnected complement
of that subset is the cycle graph $C_{n-k}.$ Specifically the
reconnected complement of $C_{p+q}$ with respect to the nodes
$\iota(p)$ is the graph $C_q.$ Thus even the expanded tubes of
$\overline{\hat{\iota}(v)}$  remain far apart as long as their
components from $\hat{\iota}(q)$ were far apart; and this last
property is guaranteed since $\iota$ preserves the cyclic order.
\end{enumerate}
Thus we have shown that the result of multiplying two basis elements
is again a basis element, and that this multiplication respects the
grading. That this multiplication is associative is a corollary of
the following result regarding how the multiplication is preserved
under a map from $\ssym,$  specifically a corollary of
Theorem~\ref{t:permhom}.
\end{proof}

\begin{figure}[h]
                  \centering
                  \includegraphics[width=\textwidth]{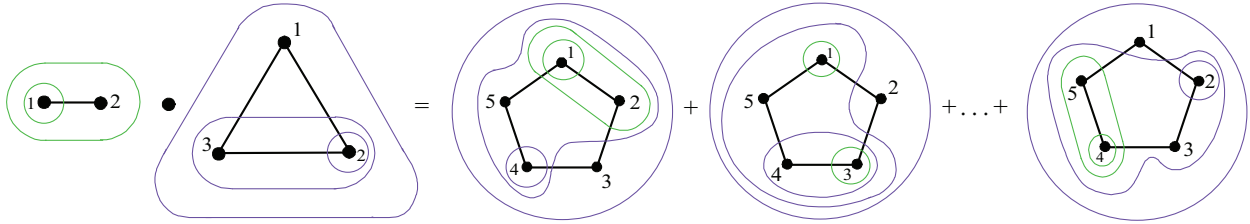}
                  \caption{A product of cycle graph tubings (10 terms total).  }\label{permcyc}
  \end{figure}
\begin{rema}
Note that the cases (1) and (2) are explainable simply by the fact
that we start with two valid tubings and multiply them in the given
order. Cases (3)-(6) however can be jointly explained based upon the
fact that given any subset of $k$ of the nodes of $C_n$, the
reconnected complement of that subset is the cycle graph $C_{n-k}.$
Cases (3)-(5) specifically rely on the fact that the reconnected
complement of $C_{p+q}$ with respect to the nodes $\hat{\iota}(q)$
is the graph $C_p.$ This property is true of many other graph
sequences, including the complete graphs and the path graphs.  The
property is also more simply stated as follows: the reconnected
complement of $G_i$ with respect to any single node is $G_{i-1}.$
\end{rema}
\begin{rema}\label{r:geo_cyc}
Once again we can interpret the product geometrically. The entire
contents of the proof of Theorem~\ref{r:geo} apply here, with the term ``path''
everywhere replaced with the term ``cycle.'' Thus a term in the product can be
seen as first projecting the cyclohedron
 vertex $u$
onto a collection of sub-path-graphs of the cycle. We then map the
vertex of this cartesian product of associahedra, paired with the
second vertex of the cyclohedron represented by $v,$ into the large
cyclohedron via the indicated face inclusion. The usual picture is
in Figure~\ref{f:demo_cyc}.
\end{rema}

\begin{figure}[h]
                  \centering
                  \includegraphics[width=\textwidth]{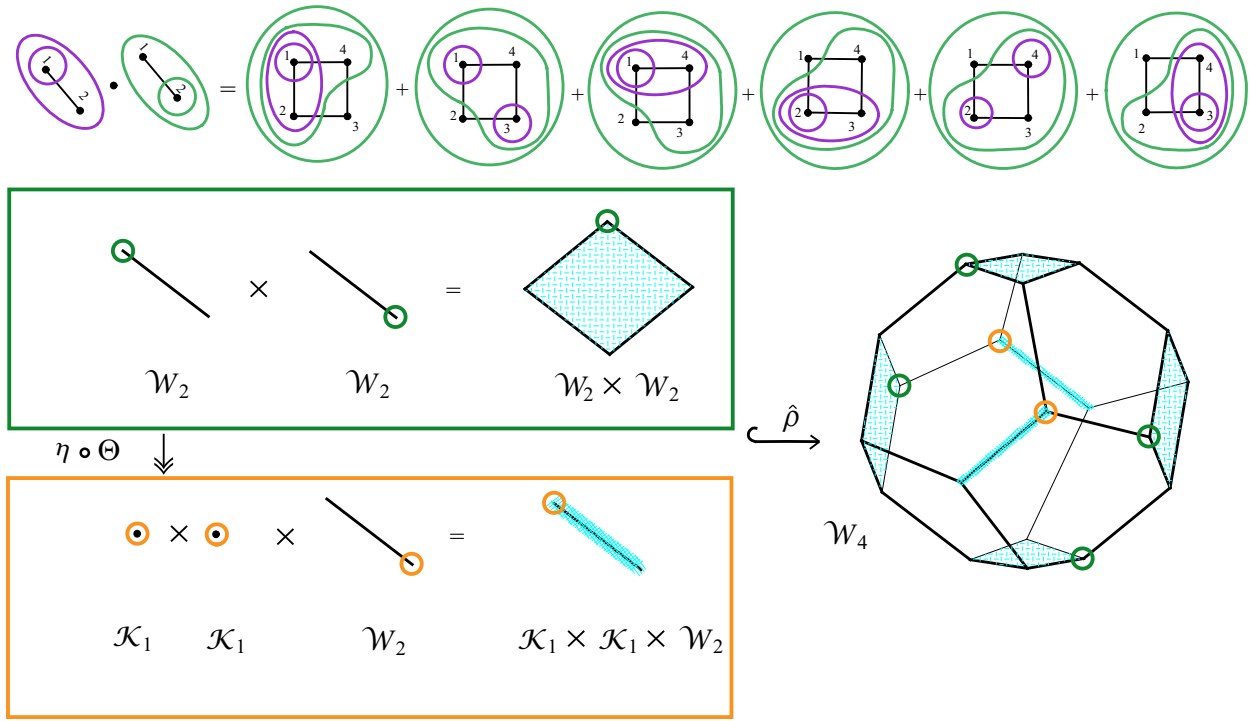}
                  \caption{The product in
                  $\wsym$. The second and fifth terms of the product are the
ones that use a Tonks projection; they are found as the two vertices at the tips of
included (highlighted) edges.}\label{f:demo_cyc}
  \end{figure}

\subsection{Algebra homomorphisms}

Next we consider the map from the permutohedra to the cyclohedra
which is described via the deletion of edges--from the complete
graphs to the cycle graphs--and point out that this is an algebra
homomorphism. Recall from Theorem~\ref{t:ftonks} that $\Theta_P$ is
the Tonks projection viewed from the graph associahedra point of
view; via deletion of the edges of the complete graph except for the
path connecting the numbered nodes in order. Let $\Theta_c$ be the
map defined just as $\Theta_P$ but without deleting the edge from
node $n$ to node $1.$ Thus we will be deleting all the edges except
those making up the cycle of numbered nodes in cyclic order. Define
 a map from $\ssym$ to $\wsym$ on basis elements by:
$$\hat{\Theta}_c(F_u) = F_{\Theta_c(u)}.$$

\begin{thm}\label{t:permhom}
The map $\hat{\Theta}_c$ is an algebra homomorphism from $\ssym$ onto
$\wsym.$
\end{thm}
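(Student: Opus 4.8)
The plan is to verify the two homomorphism properties separately: first that $\hat{\Theta}_c$ respects multiplication, and second that it is surjective (which is immediate from Lemma~\ref{l:newtonks}, since $\Theta_c = \Theta_E$ for the appropriate edge set $E$ is a cellular surjection, hence onto on vertices). The substance is the multiplicativity: I must show $\hat{\Theta}_c(F_u \cdot F_v) = \hat{\Theta}_c(F_u) \cdot \hat{\Theta}_c(F_v)$ for $u \in S_p$, $v \in S_q$. Expanding both sides using Theorem~\ref{t:geo_pro} for the product in $\ssym$ and Definition~\ref{d:cyc_prod} for the product in $\wsym$, the left side is a sum over shuffles $\iota \in S^{(p,q)}$ of the terms $F_{\Theta_c(\hat{\rho}_\iota(u,v))}$, while the right side is $\sum_{\iota} F_{\hat{\rho}_\iota(\eta_\iota(\Theta_\iota(\Theta_c(u))), \Theta_c(v))}$. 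So it suffices to establish the term-by-term identity
\[
\Theta_c\bigl(\hat{\rho}_\iota(u,v)\bigr) = \hat{\rho}_\iota\bigl(\eta_\iota(\Theta_\iota(\Theta_c(u))), \Theta_c(v)\bigr)
\]
for each fixed shuffle $\iota$, where on the right $\hat{\rho}_\iota$ and $\eta_\iota$ refer to the maps built from the components of the subgraph of the cycle $C_{p+q}$ induced on $\iota([p])$.

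The key observation I would lean on is that all the maps in sight are defined tube-by-tube, together with the commutativity of edge-deletion (Lemma~\ref{l:comm}) and the iterated-reconnected-complement identity of Corollary~\ref{c:mfacet}. Concretely, I would use the description from the proof of Theorem~\ref{t:geo_pro}: $\hat{\rho}_\iota(u,v) = \overline{\iota(u)} \cup \overline{\hat\iota(v)}$, where $\overline{\iota(u)}$ carries each tube $t$ of $u$ to the tube on nodes $\iota(t)$, and $\overline{\hat\iota(v)}$ carries each tube $s$ of $v$ to $\iota([p]) \cup \hat\iota(s)$. Applying $\Theta_c$ — which deletes all edges of $K_{p+q}$ except the cyclic ones — breaks each such image tube exactly at the gaps in its cyclic order. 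For a tube of the form $\iota(t)$, the resulting pieces are precisely the images under $\iota$ of the connected components of $t$ in the subgraph of $C_p$ carried over by the partition $s_1,\dots,s_k$; this is exactly the content of $\eta_\iota \circ \Theta_\iota$ applied to $\Theta_c(u)$ — one checks that $\Theta_c$ on $K_p$ followed by the further breaks from $\Theta_\iota$ equals the breaks $\Theta_c$ imposes directly on $\iota(t) \subseteq C_{p+q}$, using Lemma~\ref{l:comm} and the fact that the relevant edge sets compose. For a tube $\iota([p]) \cup \hat\iota(s)$, I must show $\Theta_c$ breaks it into exactly $\hat\iota(s)$ expanded by the adjacent surviving $\iota(u)$-tubes — i.e., the tube described in the second step of the proof of Theorem~\ref{r:geo} (with ``cycle'' in place of ``path'', per Remark~\ref{r:geo_cyc}); this uses that the reconnected complement of $C_{p+q}$ with respect to $\iota([p])$ is $C_q$, so the cyclic adjacency structure on the $v$-part is faithfully preserved.

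The main obstacle, I expect, is the bookkeeping in showing that $\Theta_c$ and the composite $\hat{\rho}_\iota \circ (\eta_\iota \times \mathrm{id}) \circ (\Theta_\iota \circ \Theta_c \times \Theta_c)$ break tubes in the same way — in particular verifying that the two ``orders of breaking'' (break-then-include versus include-then-break) yield identical tubings. This is where Lemma~\ref{l:comm} (commutativity of the $\Theta_e$) does the real work: since $\Theta_c$ is a composition of single-edge deletions and $\hat{\rho}_\iota$ merely relabels and expands tubes along edges that $\Theta_c$ will not touch in a conflicting way, the operations can be reordered. I would make this precise by fixing an arbitrary tube of $\hat{\rho}_\iota(u,v)$, computing both sides explicitly on that tube using the case analysis (tube from $\overline{\iota(u)}$ versus tube from $\overline{\hat\iota(v)}$), and invoking the iterated reconnected complement formula $\rec{t_1 \cup \dots \cup t_k} = (\dots(G^*(t_1))^*\dots)^*(t_k)$ to identify the reconnected complements. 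Finally, since the product in $\ssym$ is associative (Malvenuto–Reutenauer) and $\hat{\Theta}_c$ is a surjective algebra map, associativity of $\wsym$ follows, completing the deferred part of the proof of Theorem~\ref{t:biggy}.
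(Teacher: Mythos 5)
Your proposal is correct and follows essentially the same route as the paper: reduce multiplicativity to the term-by-term identity $\Theta_c(\overline{\iota(u)}\cup\overline{\hat{\iota}(v)})=\overline{\iota(\Theta_c(u))}\cup\overline{\hat{\iota}(\Theta_c(v))}$ for each fixed shuffle, verify it tube-by-tube in the two cases (tubes coming from $u$ versus from $v$) using the commutativity of edge deletions from Lemma~\ref{l:comm} to reorder ``break'' and ``include,'' and get surjectivity from Lemma~\ref{l:newtonks}. The paper's own justification is at the same level of detail, so nothing essential is missing.
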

\begin{proof}
 For $u\in S_p$ and $v\in
S_q$ we compare $\hat{\Theta}_c(F_u\cdot F_v)$ with $\hat{\Theta}_c(F_u)\cdot
\hat{\Theta}_c(F_v).$ Each of the multiplications results in a sum
of ${p+q \choose p}$ terms. It turns out that comparing the results
of the two operations can be done piecewise. Thus we check that for
a given shuffle $\iota\in S^{(p,q)}$ the respective terms of our
two operations agree: we claim that
$$
\Theta_c(\hat{\rho}_{\iota}(\eta_{\iota}(\Theta_{\iota}(u)) ,v)) =
\hat{\rho}_{\iota}(\eta_{\iota}(\Theta_{\iota}(\Theta_c(u))) ,\Theta_c(v))
$$
or equivalently:
$$\Theta_c(\overline{\iota(u)} \cup  \overline{\hat{\iota}(v)}) = \overline{\iota(\Theta_c(u))} \cup  \overline{\hat{\iota}(\Theta_c(v))}.$$
Here $\overline{\iota(u)}$ and $\overline{\hat{\iota}(v)}$ are as
described in the proof of Theorem~\ref{r:geo}, and the righthand
side of the equation is using the notation of
Definition~\ref{d:cyc_prod}.
 The justification is a straightforward comparison of the
indicated operations on individual tubes. There are two cases:
\begin{enumerate} \item For $t$ a
tube of $u$, the right-hand side first breaks $t$ into several
smaller tubes by deleting certain edges of the $p$-node complete
graph, then takes each of these to their image under $\iota,$
breaking them again whenever they are no longer connected. The
left-hand side takes $t$ to $\iota(t),$ a tube of the complete graph
on $p+q$ nodes, and then breaks $\iota(t)$ into the tubes that
result from deleting the specified edges of the $(p+q)$-node
complete graph. In this last step, by Lemma~\ref{l:comm}, we can
delete first those edges that also happen to lie in the complete
subgraph induced by $\iota([p]),$ and then the remaining specified
edges. Thus we have duplicated the left-hand side and get the same
final set of tubes on either side of the equation.
\item For $s$ a tube of $v$,
the right-hand side first breaks $s$ into several smaller tubes by
deleting certain edges of the $q$-node complete graph, then takes
each of these to their image under $\hat{\iota},$ expanding them to
include tubes of $\overline{\iota(\Theta_c(u))}$. The left-hand side
takes $s$ to $\hat{\iota}(s)\cup\iota([p]),$ and then breaks the
result into the tubes that result from deleting the specified edges
of the $(p+q)$-node complete graph. Again we can delete first those
edges that also happen to lie in the complete subgraph induced by
$\iota([p]),$ and then the remaining specified edges, duplicating
the left-hand process.
\end{enumerate}
An illustration of the two sides of the equation is in
Figure~\ref{f:hom}.
The surjectivity of $\hat{\Theta}_c$  follows from the surjectivity of the
generalized Tonks projection, as shown in Lemma~\ref{l:newtonks}.
 \end{proof}
\begin{figure}[h]
                  \centering
                  \includegraphics[width=\textwidth]{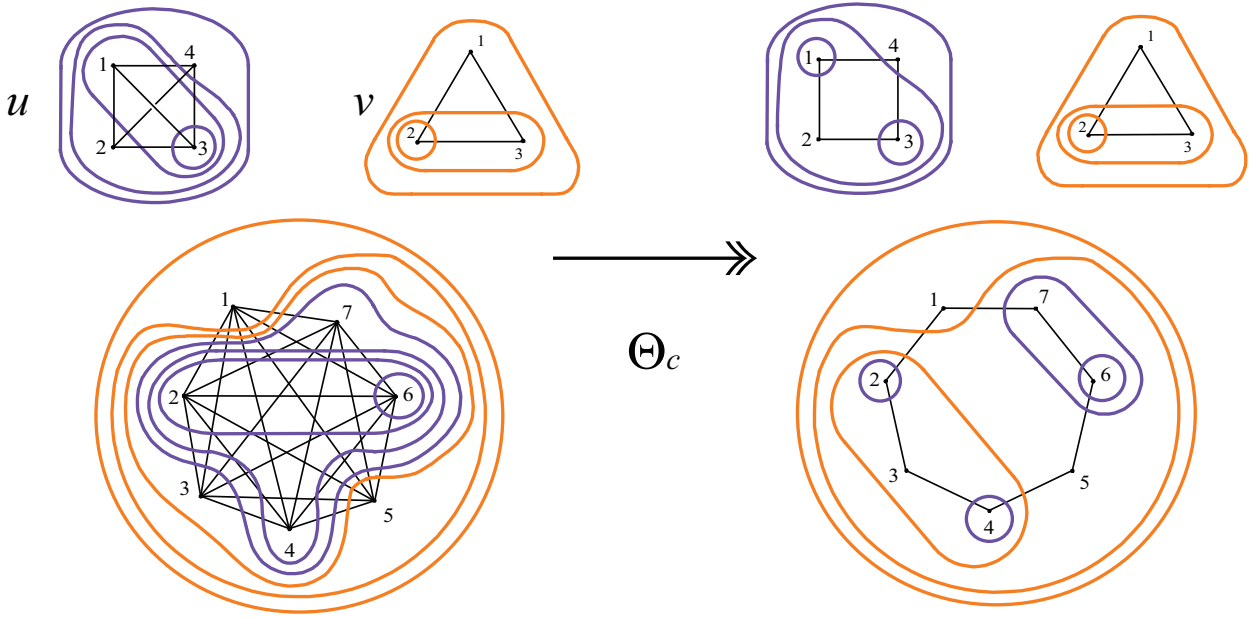}
                  \caption{An example of the equation
                  $\Theta_c(\overline{\iota(u)} \cup  \overline{\hat{\iota}(v)}) = \overline{\iota(\Theta_c(u))} \cup  \overline{\hat{\iota}(\Theta_c(v))}.$
                  where $u=(2314),$  $v=(312)$ and  $\iota([4]) = \{2,4,6,7\}.$ }\label{f:hom}
  \end{figure}

Let $C_n$ be the cycle graph on $n$ numbered nodes and let $w$ be
the edge from node $1$ to node $n.$ We can define a map from $\wsym$
to $\ysym$ by:
$$\hat{\Theta}_w(F_u) = F_{\Theta_w(u)},$$
 for $u\in
W_n.$


\begin{thm}\label{t:whom}
$\hat{\Theta}_w$ is a surjective homomorphism of graded algebras $\wsym \to
\ysym$.
\end{thm}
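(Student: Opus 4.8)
The plan is to follow the pattern already established in the proof of Theorem~\ref{t:permhom}, since $\hat\Theta_w$ is exactly the ``next'' factor of the Tonks projection: deleting the single edge $w$ from node $1$ to node $n$ turns the cycle graph $C_n$ into the path graph on $n$ nodes, so $\Theta_w\colon \K C_n \twoheadrightarrow \K(C_n-w) = \K(\text{path})$ is a cellular surjection by Lemma~\ref{l:newtonks}, and under the bijections $f$, $g$ its restriction to vertices sends $n$-tubings of $C_n$ to $n$-tubings of the path, i.e.\ $\Theta_w$ is well-defined on basis elements. The surjectivity of $\hat\Theta_w$ is then immediate from the surjectivity half of Lemma~\ref{l:newtonks} (exactly as at the end of the proof of Theorem~\ref{t:permhom}). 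So the content of the theorem is the multiplicativity statement.

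For multiplicativity, I would take $u\in W_p$ and $v\in W_q$ and compare $\hat\Theta_w(F_u\cdot F_v)$ with $\hat\Theta_w(F_u)\cdot\hat\Theta_w(F_v)$. Both products are sums of $\binom{p+q}{p}$ terms indexed by shuffles $\iota\in S^{(p,q)}$, so it suffices to check the identity termwise, i.e.\ for each fixed $\iota$ that
$$
\Theta_w\bigl(\hat\rho_\iota(\eta_\iota(\Theta_\iota(u)),v)\bigr)
= \hat\rho_\iota\bigl(\eta_\iota(\Theta_\iota(\Theta_w(u))),\Theta_w(v)\bigr),
$$
or equivalently, in the notation $\overline{\iota(u)}\cup\overline{\hat\iota(v)}$ of the proof of Theorem~\ref{r:geo},
$$
\Theta_w\bigl(\overline{\iota(u)}\cup\overline{\hat\iota(v)}\bigr)
= \overline{\iota(\Theta_w(u))}\cup\overline{\hat\iota(\Theta_w(v))}.
$$
I would verify this by the same two-case tube-by-tube analysis used in Theorem~\ref{t:permhom}: one case for each tube $t$ of $u$ (the right side breaks $t$ by deleting the edge between nodes $1$ and $p$ of $C_p$ first and then relocates via $\iota$; the left side first forms $\iota(t)$ in $C_{p+q}$, then deletes the cycle-to-path edges, and by the commutativity of Lemma~\ref{l:comm} we may delete the ``inside $\iota([p])$'' edge first, reproducing the right side), and one case for each tube $s$ of $v$ (analogously, using that $\hat\iota$ preserves cyclic order so that the reconnected complement $C_{p+q}^*(\hat\iota(q)) = C_p$, and again invoking Lemma~\ref{l:comm}). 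The only genuinely new ingredient compared to Theorem~\ref{t:permhom} is keeping track of which single edge of the ambient cycle is being deleted and confirming that, after relocating tubes by $\iota$ and $\hat\iota$, that edge sits ``inside'' the $\iota([p])$-block or ``inside'' the complementary block in a compatible way — but this follows from the cyclic-order-preservation property of shuffles already exploited in case (6) of the proof of Theorem~\ref{t:biggy}.

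The main obstacle I anticipate is a bookkeeping subtlety rather than a conceptual one: after applying $\iota$, the deleted edge $w$ of the cycle $C_{p+q}$ (between global nodes $1$ and $p+q$) need not correspond to the edge deleted by $\Theta_w$ inside the $C_p$ sitting on $\iota([p])$ nor inside the $C_q$ on the complement, because the nodes $1$ and $p+q$ of $C_{p+q}$ may be distributed between the two blocks. I would resolve this by noting that $\Theta_w$ in the ambient $C_{p+q}$ is, after the expansions defining $\overline{\iota(u)}$ and $\overline{\hat\iota(v)}$, simply the deletion of the one edge joining the first and last global nodes; tracing how this edge interacts with the block structure — it either lies within one block, in which case Lemma~\ref{l:comm} lets us perform it first and the argument closes, or it straddles the two blocks, in which case that edge was already absent in $C_n-w$ on both sides and deleting it changes nothing — covers all subcases. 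Once that dichotomy is spelled out, the termwise identity follows exactly as in Theorem~\ref{t:permhom}, and associativity of $\wsym$ (Theorem~\ref{t:biggy}) together with the known associativity of $\ysym$ makes the homomorphism statement meaningful and complete.
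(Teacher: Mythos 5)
Your strategy is legitimate and would work, but it is genuinely different from the paper's proof, which is much shorter: the paper simply observes that the Loday--Ronco map $\bs{\tau}\colon\ssym\to\ysym$ (already known to be an algebra homomorphism) factors as $\bs{\tau}=\hat{\Theta}_w\circ\hat{\Theta}_c$, and since Theorem~\ref{t:permhom} makes $\hat{\Theta}_c$ a \emph{surjective} algebra homomorphism, every product in $\wsym$ can be lifted to $\ssym$ and the multiplicativity of $\hat{\Theta}_w$ falls out formally, with no new tube-by-tube analysis. Your direct termwise verification buys self-containedness and would adapt to other single-edge deletions in the factored Tonks projection where the composite is not already a known homomorphism; the paper's route buys brevity and reuses Loday--Ronco. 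The one place where your sketch is too quick is the ``straddling'' subcase: when the edge $w=\{1,p+q\}$ joins the two blocks, it is not true that ``deleting it changes nothing,'' because the expansion step defining $\overline{\hat\iota(v)}$ can absorb a tube of $\overline{\iota(u)}$ into a tube of $\overline{\hat\iota(v)}$ \emph{via adjacency across $w$} before $\Theta_w$ is applied, whereas on the right-hand side that absorption never happens; you must check that applying $\Theta_w$ afterwards breaks the enlarged tube back into exactly the pieces the right-hand side produces. That check does go through, and the cleanest way to see it (and to close the gap without redoing the bookkeeping) is exactly the paper's observation: compose the termwise identities already established for $\hat{\Theta}_c$ and for $\bs{\tau}$ and use surjectivity of $\Theta_c$ on vertices. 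As a minor aside, the paper's final sentence contains a typo (it concludes ``from $\ssym$ onto $\wsym$'' where it means ``from $\wsym$ onto $\ysym$''), so your statement of the conclusion is actually the cleaner one.
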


\begin{proof}
In \cite{LR} it is shown that the map we call $\bs{\tau}$ is an algebra homomorphism from $\ssym$ to $\ysym.$
In the previous theorem we demonstrated that the map $\hat{\Theta}_c$ is a surjective algebra homomorphism
from $\ssym$ to $\wsym.$ Now, since $\tau$ is the same map on vertices
 as our $\Theta_P$ (from Theorem~\ref{t:ftonks}), then
 the relationship of these three is:
$$\bs{\tau} = \hat{\Theta}_w \circ \hat{\Theta}_c.$$  Thus
$\hat{\Theta}_w$  is an algebra homomorphism from $\ssym$ onto
$\wsym.$
\end{proof}

\begin{rema}
The existence of a surjective algebra homomorphism from $\ssym$
 not only allows us a shortcut to demonstrating associativity, but leads to an alternate description of the product
in the range of that homomorphism.  This may be achieved in three steps:
     \begin{enumerate}
     \item Lifting our $p$ and $q$-tubings in $\wsym$ to any preimages of the generalized Tonks projection $\Theta_c$
 on the complete graphs on $p$ and $q$ nodes.
     \item Performing the product of these complete graph tubings in $\ssym$,
     \item and finding the image of the resulting terms under the homomorphism $\hat{\Theta}_c$.
     \end{enumerate}
This description is independent of our choices of preimages in $\ssym$ due to the homomorphism property.
\end{rema}
The following corollary follows directly from the properties of the surjective algebra homomorphisms of
Theorems~\ref{t:permhom} and~\ref{t:whom}.
\begin{cor}\label{c:cool}
$\wsym$ is a left $\ssym$-module under the definition $F_u\cdot F_{\Theta_c(v)} = \hat{\Theta}_c(F_u\cdot
F_v)$ and a right $\ssym$-module under the definition $F_{\Theta_c(u)}\cdot F_v = \hat{\Theta}_c(F_u\cdot
F_v).$ $\ysym$ is a left $\wsym$-module under the definition $F_u\cdot F_{\Theta_w(v)} =
\hat{\Theta}_w(F_u\cdot F_v)$ and a right $\wsym$-module under the definition $F_{\Theta_w(u)}\cdot F_v =
\hat{\Theta}_w(F_u\cdot F_v).$
\end{cor}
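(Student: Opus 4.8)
The plan is to deduce the corollary as a purely formal consequence of facts already in hand: that $\hat{\Theta}_c\colon\ssym\twoheadrightarrow\wsym$ and $\hat{\Theta}_w\colon\wsym\twoheadrightarrow\ysym$ are surjective homomorphisms of graded algebras (Theorems~\ref{t:permhom} and~\ref{t:whom}), together with the associativity of $\wsym$ (Theorem~\ref{t:biggy}) and the classical associativity of $\ysym$. The general principle I would invoke is that whenever $\phi\colon A\twoheadrightarrow B$ is a surjective algebra homomorphism, the associative multiplication of $B$ turns $B$ into an $A$-bimodule via $a\cdot b:=\phi(a)b$ and $b\cdot a:=b\,\phi(a)$; the four module structures in the statement are exactly these actions, rewritten by using surjectivity to present an arbitrary basis element of $\wsym$ as $F_{\Theta_c(v)}$ and an arbitrary basis element of $\ysym$ as $F_{\Theta_w(v)}$.

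First I would pin down the one point that genuinely needs an argument, namely that the proposed left action of $\ssym$ on $\wsym$ is well defined, since a given basis element of $\wsym$ has many $\hat{\Theta}_c$-preimages in $\ssym$. Here I would use the homomorphism property directly: $\hat{\Theta}_c(F_u\cdot F_v)=\hat{\Theta}_c(F_u)\cdot\hat{\Theta}_c(F_v)=\hat{\Theta}_c(F_u)\cdot F_{\Theta_c(v)}$, the last product now taken in $\wsym$, so the right-hand side of the defining equation depends only on $F_u$ and on the element $F_{\Theta_c(v)}$ and not on the choice of $v$. Extending $\Q$-bilinearly then gives a well-defined bilinear map $\ssym\otimes\wsym\to\wsym$ sending $a\otimes X$ to $\hat{\Theta}_c(a)\cdot X$.

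The remaining steps are routine verifications of the module axioms, carried out in this order: bilinearity (immediate from linearity of $\hat{\Theta}_c$ and bilinearity of the $\wsym$-product); the associativity law $(F_u\cdot F_{u'})\cdot X=F_u\cdot(F_{u'}\cdot X)$, which unwinds via the previous paragraph to $\hat{\Theta}_c(F_u\cdot F_{u'})\cdot X=\hat{\Theta}_c(F_u)\cdot\bigl(\hat{\Theta}_c(F_{u'})\cdot X\bigr)$ and holds because $\hat{\Theta}_c$ is multiplicative and $\wsym$ is associative; and the unit law $1\cdot X=\hat{\Theta}_c(1)\cdot X=X$, since $\hat{\Theta}_c$ is unital. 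The right $\ssym$-module structure on $\wsym$ is obtained by the mirror-image computation, and the two combine into an $\ssym$-bimodule structure. Finally I would observe that the assertions about $\ysym$ as a left and right $\wsym$-module follow by the identical argument, with $\hat{\Theta}_c$ replaced by the surjective homomorphism $\hat{\Theta}_w$ of Theorem~\ref{t:whom} and the associativity of $\wsym$ supplied by Theorem~\ref{t:biggy}. I do not expect any serious obstacle: the only step requiring even a moment's thought is the well-definedness check above, and once that is in place the module axioms are forced by the homomorphism property.
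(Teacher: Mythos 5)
Your proposal is correct and matches the paper's intent exactly: the paper gives no detailed argument, stating only that the corollary ``follows directly from the properties of the surjective algebra homomorphisms of Theorems~\ref{t:permhom} and~\ref{t:whom},'' which is precisely the standard ``surjective homomorphism induces a bimodule structure'' principle you invoke. Your explicit well-definedness check and verification of the module axioms simply supply the routine details the paper leaves implicit.
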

%
%

\section{Extension to the faces of the polytopes}\label{s:chap}
Chapoton was the first to point out the fact that in studying the
Hopf algebras based on vertices of permutohedra, associahedra and
cubes, one need not restrict their attention to just the
zero-dimensional faces of the polytopes. He has shown that the Loday
Ronco Hopf algebra $\ysym,$ the Hopf algebra of permutations
$\ssym,$ and the Hopf algebra of quasisymmetric functions $\qsym$
are each subalgebras of algebras based on the the trees, the ordered
partitions, and faces of the hypercubes respectively \cite{chap}.
 Furthermore, he has demonstrated
that these larger algebras of faces are bi-graded and possess a
differential. Here we point out that the cyclohedra based algebra
$\wsym$ can be extended to a larger algebra based on all the faces
of the cyclohedra as well, and conjecture the additional properties.

Chapoton's product structure on the permutohedra faces is given in
\cite{chap} in terms of ordered partitions. Let $\tilde{\ssym}$ be
the graded vector space over $\Q$ with the $n^{th}$ component of its
basis given by the ordered partitions of $[n].$ We write $F_{u}$ for
the basis element corresponding to the $m$-partition $u:[n] \to
[m],$ for $0\le m\le n,$ and 1 for the basis element of degree 0.
The product in $\tilde{\ssym}$ of two basis elements $F_u$ and $F_v$
for $u:[p]\to[k]$ and $v:[q]\to[l]$ is found by summing a term for
each shuffle, created by composing the juxtaposition of $u$ and $v$
with the inverse shuffle:
\begin{large}
$$
F_u\cdot F_v = \sum_{\sigma \in S^{(p,q)}} F_{(u\times v)\cdot \sigma^{-1}}.
$$
\end{large}
Here $u\times v$ is the ordered $(k+l)$-partition of $[p+q]$ given
by:
$$
(u\times v)(i) = \begin{cases}
u(i),    &i \in [p] \\
v(i-p)+k,    & i \in \{p+1,\dots,p+q\}.
\end{cases}
$$
The bijection between tubings of complete graphs and ordered partitions allows us to
write this product geometrically.
\begin{thm}\label{t:geochap}
The product in $\tilde{\ssym}$  may be written as:
\begin{large}
$$
F_u\cdot F_v = \sum_{\iota \in S^{(p,q)}} F_{\hat{\rho}_{\iota}(u,v)}
$$
\end{large}
which is just Definition~\ref{t:geo_pro} extended to all pairs of faces $(u,v).$
\end{thm}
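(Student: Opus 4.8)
The plan is to run the argument in the proof of Theorem~\ref{t:geo_pro} essentially verbatim, the only change being that the bijection between $n$-tubings of the complete graph $K_n$ and permutations in $S_n$ is replaced by the bijection $f$ between \emph{all} tubings of $K_n$ and \emph{all} ordered partitions of $[n]$; throughout we identify an ordered partition with the complete-graph tubing it names under $f$. Fix a shuffle $\iota\in S^{(p,q)}$ and put $\hat{\iota}(i)=\iota(i+p)$. The face $\hat{\rho}_{\iota}(u,v)$ of $\P_{p+q}$ is still well defined: here $\iota([p])$ induces the connected subgraph $K_p$, so it is a single tube $t$, and by Theorem~\ref{t:facet} the facet of $\P_{p+q}$ indexed by $t$ has face poset isomorphic to $\P_p\times\P_q$, of which $(u,v)$ — now possibly a higher face rather than a vertex — is a face; thus $\hat{\rho}_{\iota}=\hat{\rho}_{\iota([p])}$ exactly as in Theorem~\ref{t:geo_pro}. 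Consequently the whole statement reduces, just as in the vertex case, to the single claim that $\hat{\rho}_{\iota}(u,v)$ corresponds under $f$ to the ordered partition $(u\times v)\cdot\iota^{-1}$.

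First I would recall from the proof of Theorem~\ref{t:geo_pro} that $\hat{\rho}_{\iota}(u,v)$ is the tubing $\overline{\iota(u)}\cup\overline{\hat{\iota}(v)}$ on $K_{p+q}$, where $\overline{\iota(u)}$ has the tube $\iota(r)$ for each tube $r$ of $u$ and $\overline{\hat{\iota}(v)}$ has the tube $\iota([p])\cup\hat{\iota}(s)$ for each tube $s$ of $v$. This description reads the same when $u$ is a $k$-tubing and $v$ an $l$-tubing rather than complete tubings, and it yields a $(k+l)$-tubing; the compatibility check is the one already carried out in Theorem~\ref{t:geo_pro} and is in fact easier, since in a complete graph all tubes are nested and no tube is broken. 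Next I would read off, via $f$, the ordered partition of $\overline{\iota(u)}\cup\overline{\hat{\iota}(v)}$: every tube of $\overline{\hat{\iota}(v)}$ contains $\iota([p])$ and hence every tube of $\overline{\iota(u)}$, so the nested chain is the chain from $u$ followed by the chain from $v$; since $\iota$ is injective the nodes new to $\iota(r)$ are the $\iota$-image of those new to $r$, and the nodes new to $\iota([p])\cup\hat{\iota}(s)$ are the $\hat{\iota}$-image of those new to $s$ (these lie in $\hat{\iota}([q])$, disjoint from $\iota([p])$). Hence the partition obtained is $\bigl(\iota(u^{-1}(1)),\dots,\iota(u^{-1}(k)),\hat{\iota}(v^{-1}(1)),\dots,\hat{\iota}(v^{-1}(l))\bigr)$. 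On the other side, the $m$-th block of $(u\times v)\cdot\iota^{-1}$ is $\iota\bigl((u\times v)^{-1}(m)\bigr)$, and from the defining formula for $u\times v$ this is $\iota(u^{-1}(m))$ when $m\le k$ and $\hat{\iota}(v^{-1}(m-k))$ when $k<m\le k+l$ (because $(u\times v)^{-1}(m)=u^{-1}(m)$ and $(u\times v)^{-1}(k+j)=p+v^{-1}(j)$). The two ordered partitions agree block by block, which is the claim; the degree-$0$ cases are immediate from $1$ being a two-sided unit.

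The hard part, such as it is, is not geometric but bookkeeping: making the conventions line up with those of Theorem~\ref{t:geo_pro} — that the innermost tube of a complete-graph tubing is read as the \emph{first} block of the ordered partition (equivalently, the top level of the associated leveled tree), and that, since $u\times v$ is an ordered partition and not a permutation, $(u\times v)\cdot\iota^{-1}$ must be parsed as the composite $i\mapsto(u\times v)(\iota^{-1}(i))$. Once these are pinned down, nothing further is required: the formula has the same shape as in the vertex case, so no separate argument is needed beyond the block comparison above, and one may simply remark that Theorem~\ref{t:geo_pro} is literally the restriction of this statement to the vertices $(u,v)$.
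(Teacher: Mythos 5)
Your proposal is correct and follows essentially the same route as the paper, which likewise observes that the tubing--to--ordered-partition bijection extends the permutation correspondence and that $\hat{\rho}$ is an isomorphism of face posets, so the argument of Theorem~\ref{t:geo_pro} carries over to arbitrary faces. You simply make explicit the block-by-block identification of $\overline{\iota(u)}\cup\overline{\hat{\iota}(v)}$ with $(u\times v)\cdot\iota^{-1}$ that the paper leaves to the reader.
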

\begin{proof}
Recall that we found the bijection between tubings of complete
graphs and ordered partitions by noting that each tube contains some
numbered nodes
   which are not contained in any other tube.
   These subsets of $[n],$ one for each tube, make up the partition, and the
   ordering of the partition is from innermost to outermost tube.
   Now the Carr-Devadoss isomorphism $\hat{\rho}$ is a
bijection of face posets.
   With this in mind, the same argument applies as in the proof of
   Theorem~\ref{t:geo_pro}.
\end{proof}

In other words, we view our operands (a pair of ordered partitions)
as a face of the cartesian product of permutohedra $\P_p \times
\P_q.$  Then the product is the formal sum of images of that face
under the collection of inclusions of $\P_p \times \P_q$ as a facet
of $\P_{p+q}.$ An example is in Figure~\ref{perm34}.

\begin{figure}[h]
                 \centering
              \includegraphics[width=\textwidth]{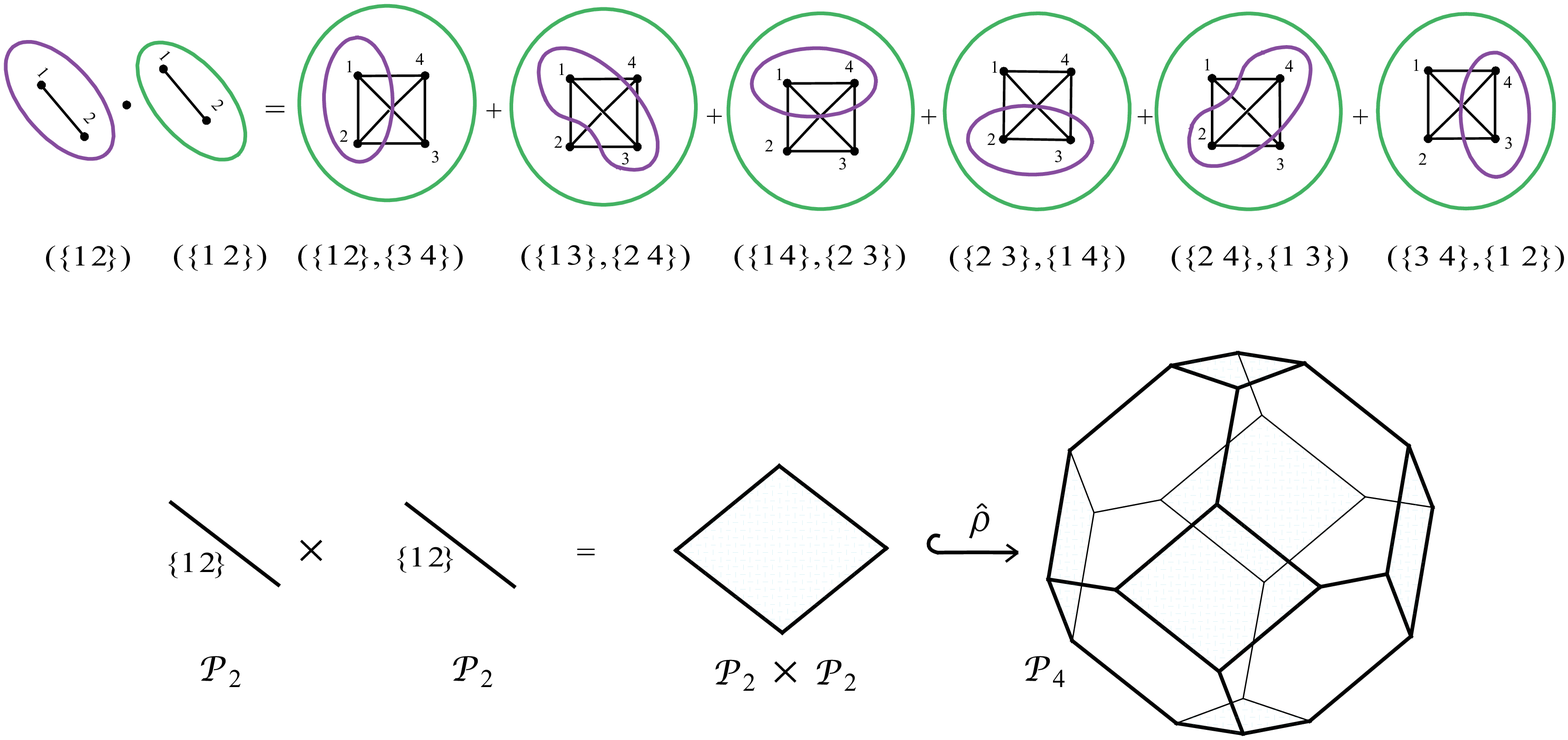}
                \caption{The product in $\tilde{\ssym}$. As an exercise the reader can
                delete edges in the graphs to illustrate the products in $\tilde{\wsym}$ and $\tilde{\ysym}.$ }\label{perm34}
\end{figure}

We leave to the reader the by now straightforward tasks of finding
the geometric interpretations of the coproduct on $\tilde{\ssym}$
and of the Hopf algebra structure on the faces of the associahedra,
$\tilde{\ysym}.$ Each one can be done simply by repeating earlier
definitions using tubings, but with all sizes of $k$-tubings as
operands.

Recall that the faces of the cyclohedra correspond to tubings of the cycle graph.
We will define a graded algebra with a basis which corresponds to
the faces of the cyclohedra, and whose grading respects the dimensions (plus one) of the cyclohedra.

\begin{defi}
Let $\tilde{\wsym}$  be the graded vector space over $\Q$ with the
$n^{th}$ component of its basis given by all the tubings on the
cycle graph of $n$ numbered nodes. By $\mathcal{W}_n=\K C_n$ we
denote the poset of tubings  on the cycle graph $C_n$ with a cyclic
numbering of nodes.  We write $F_{u}$ for the basis element
corresponding to $u\in \mathcal{W}_n$ and 1 for the basis element of
degree 0.
\end{defi}

The product in $\tilde{\wsym}$  of two basis elements $F_u$ and $F_v$ for $u\in \mathcal{W}_p$ and $v\in
\mathcal{W}_q$ is found by summing a term for each shuffle $\iota \in S^{(p,q)}.$  First the $l$-tubing $u$
 is drawn upon the induced
subgraph of the nodes $\iota([p])$ according to the ascending order of the chosen nodes. However, each tube
may need to be first broken into several sub-tubes.
 Then the $m$-tubing $v$ is drawn upon the subgraph induced by the remaining $q$ nodes. However,
each of the tubes of $v$ is expanded to also include any of the
previously drawn tubes that its nodes are adjacent to.

\begin{defi}
\begin{large}
$$
F_u\cdot F_v = \sum_{\iota \in S^{(p,q)}}F_{\hat{\rho}_{\iota}(\eta_{\iota}(\Theta_{\iota}(u)),v)}.
$$
\end{large} where the facet inclusion $\hat{\rho}_{\iota}$ is the same as in the two previous definitions
using this template: Definitions~\ref{d:sim_prod} and~\ref{d:cyc_prod}.
\end{defi}

\begin{thm}\label{t:wface}
The product we have just defined makes $\tilde{\wsym}$  into an
associative graded algebra.
\end{thm}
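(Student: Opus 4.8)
The plan is to follow the blueprint already used for Theorem~\ref{t:biggy}. First I would check that the displayed formula defines a well-defined bilinear operation on the graded vector space, and then I would deduce associativity by realizing $\tilde{\wsym}$ as a surjective algebra-homomorphic image of Chapoton's face algebra $\tilde{\ssym}$. Compatibility with the grading is immediate: a product of a degree-$p$ basis element with a degree-$q$ one is by construction a linear combination of tubings of the cycle graph $C_{p+q}$, no matter how many tubes each of those tubings contains.

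To check well-definedness I would show that each term $F_{\hat{\rho}_{\iota}(\eta_{\iota}(\Theta_{\iota}(u)),v)}$ is again a basis element. Writing $\hat{\rho}_{\iota}(\eta_{\iota}(\Theta_{\iota}(u)),v)=\overline{\iota(u)}\cup\overline{\hat{\iota}(v)}$ as in the proof of Theorem~\ref{r:geo}, one must see that this collection of tubes of $C_{p+q}$ is pairwise compatible. The six-case verification from the proof of Theorem~\ref{t:biggy} applies here unchanged: none of those arguments used that $u$ or $v$ was a maximal (vertex) tubing, only how tubes behave under the breaking map $\Theta_{\iota}$, the cyclic-order-preserving relabelling $\iota$, and the expansion built into $\hat{\rho}_{\iota}$, together with the defining feature of the cycle-graph sequence that the reconnected complement of $C_n$ with respect to any set of nodes is again a cycle. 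The two-sided unit property of $1$ is clear.

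For associativity I would introduce the map $\hat{\Theta}_c\colon\tilde{\ssym}\to\tilde{\wsym}$ given on basis elements by $\hat{\Theta}_c(F_u)=F_{\Theta_c(u)}$, now letting $u$ range over all tubings of the complete graph, equivalently all ordered partitions of $[n]$; this is well defined because $\Theta_c$ carries a tubing of $K_n$ to a tubing of $C_n$, by Lemmas~\ref{l:newtonks} and~\ref{l:comm}. That $\hat{\Theta}_c$ is an algebra homomorphism is the same two-case computation as in the proof of Theorem~\ref{t:permhom}, namely the identity $\Theta_c(\overline{\iota(u)}\cup\overline{\hat{\iota}(v)})=\overline{\iota(\Theta_c(u))}\cup\overline{\hat{\iota}(\Theta_c(v))}$ for each shuffle $\iota$, which again uses only Lemma~\ref{l:comm} to reorder edge deletions and never the maximality of tubings; surjectivity of $\hat{\Theta}_c$ follows from surjectivity of the cellular projection $\Theta_c$ (Lemma~\ref{l:newtonks}). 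Since $\tilde{\ssym}$ is associative by Chapoton's work~\cite{chap} (one may also read this off directly from the shuffle formula for its product recalled above), and a surjective algebra homomorphism from an associative algebra has associative image — pick preimages of any three basis elements of $\tilde{\wsym}$ and apply the homomorphism property twice — it follows that $\tilde{\wsym}$ is associative.

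The step requiring most care is not a calculation but a bookkeeping check: confirming that the proofs of Theorems~\ref{t:biggy} and~\ref{t:permhom} really do survive the passage from vertex tubings to arbitrary tubings. The mild hazard is that for a partial tubing a tube of $u$ may survive $\Theta_{\iota}$ intact, or a tube of $v$ may be expanded by nothing; I would make these degenerate sub-cases explicit and verify that each collapses to one of the cases already treated, rather than merely asserting that the same argument applies. Everything else — the grading, the unit, and the descent of associativity along the surjection — is routine.
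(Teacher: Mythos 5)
Your proposal is correct and follows essentially the same route as the paper: the compatibility of the new tubes is verified by the same six-case argument from Theorem~\ref{t:biggy}, and associativity is obtained by lifting to Chapoton's face algebra $\tilde{\ssym}$ along the extended Tonks projection and checking, as in Theorem~\ref{t:permhom}, that this surjection is an algebra homomorphism. Your added attention to the degenerate sub-cases for non-maximal tubings is a reasonable refinement of what the paper merely asserts, not a different method.
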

\begin{proof}
The proof is almost precisely the same as for the algebra of vertices of the cyclohedra, $\wsym.$
The only difference is that the tubings do not always have the maximum number of tubes.
First we must check that the result of the product is indeed a sum of valid tubings of the cycle graph $C_{p+q}.$
We claim that in each term the new tubes we have created are pairwise compatible.
 The cases to be checked and the reasoning for each are exactly as shown in the proof of Theorem~\ref{t:biggy}.
Associativity is shown by lifting the tubings to be multiplied to tubings on the complete graphs which are
preimages of the extended Tonks projection, and performing the multiplication in Chapoton's algebra. The fact
that the extended Tonks projection does preserve the structure here is again an easy check of corresponding
terms, following the same pattern as for the $n$-tubings in Theorem~\ref{t:permhom}.
\end{proof}

\begin{rema}
Chapoton has shown the existence of a differential graded structure on the algebras of faces of the
permutohedra, associahedra, and cubes \cite{chap}. The basic idea is simple, to define the differential as a
signed sum of the bounding sub-faces of a given face. Here we leave for future investigation the possibility
of extending this differential to algebras of graph associahedra.
\end{rema}

%
%

\section{Algebras of simplices}\label{s:simp}

We introduce a curious new graded algebra whose $n^{th}$ component has dimension $n.$ We denote it
$\dsym.$ In fact $\dsym$ may be thought of as a graded algebra whose basis is made up of all the standard
bases for Euclidean spaces.

The graph associahedron of the edgeless graph on $n$ vertices is the $(n-1)$-simplex $\Delta^{n-1}$  of
dimension $n-1.$  Thus the final range of our extension of the Tonks projection is the $(n-1)$-simplex (see
Figure~\ref{f:disc}.) The product of vertices of the permutohedra can be projected to a product of vertices
of the simplices. First we define this product by analogy to the previously defined products of tubings of
graphs.
 Then we will show the product to be associative via a
homomorphism from the algebra of permutations. Finally we will give a formula for the product using positive
integer coefficients and standard Euclidean basis elements.

\begin{defi}
Let $\Delta Sym$ be the graded vector space over $\Q$ with the $n^{th}$ component of its basis given by the
$n$-tubings on the edgeless graph of $n$ numbered nodes. By $D_n$ we denote the set of $n$-tubings  on the
edgeless graph. We write $F_{u}$ for the basis element corresponding to $u\in D_n$ and 1 for the basis
element of degree 0.
\end{defi}

\subsection{Graded algebra structure on vertices}
Now we demonstrate a product which respects the grading on $\dsym$ by following the example described above
for $\ssym.$
 The product in $\dsym$ of two basis elements $F_u$ and $F_v$
for $u\in D_p$ and $v\in D_q$ is found by summing a term for each shuffle $\iota \in S^{(p,q)}.$  Our term of
$F_u\cdot F_v$ will be a $p+q$ tubing of the edgeless graph on $p+q$ nodes. Its tubes will include all the
nodes numbered by $\iota([p]).$ We denote these by $\overline{\iota(u)}.$ In addition we include all the
tubes $\hat{\iota}(t)$ for non-universal $t\in v,$ and the universal tube. We denote these by
$\overline{\hat{\iota}(v)}.$ By inspection we see that in this case
 the union
$$\overline{\iota(u)} \cup \overline{\hat{\iota}(v)} = \hat{\rho}_{\iota}(\eta(u),v).$$
Thus we write:
\begin{defi}\label{d:sim_prod}
\begin{large}
$$
F_u\cdot F_v = \sum_{\iota \in S^{(p,q)}} F_{\hat{\rho}_{\iota}(\eta(u),v)}.
$$
\end{large}Also 1 is a two-sided unit.
\end{defi}
 An example is shown in Figure~\ref{f:sim}.

\begin{figure}[h]
                  \centering
                  \includegraphics[width=\textwidth]{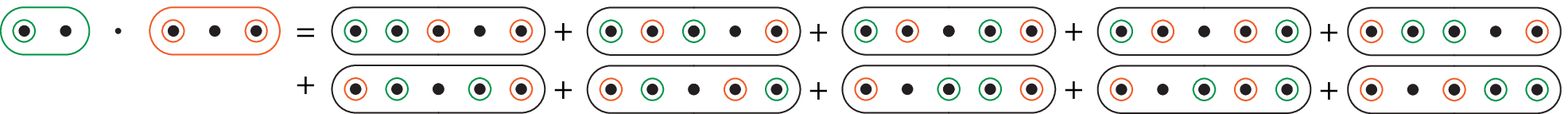}
                  \caption{The product in $\dsym.$}\label{f:sim}
  \end{figure}

\begin{thm}\label{t:sim}
The product we have just described makes $\dsym$ into an associative graded algebra.
\end{thm}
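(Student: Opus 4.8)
The plan is to prove associativity of $\dsym$ exactly as associativity of $\wsym$ was handled in Theorem~\ref{t:biggy}: rather than directly checking the pentagon-type identity $(F_u\cdot F_v)\cdot F_w = F_u\cdot(F_v\cdot F_w)$ term by term, I would exhibit a surjective algebra homomorphism from $\ssym$ onto $\dsym$ and pull back associativity from the (already known) associativity of $\ssym$. Concretely, let $E$ be the set of \emph{all} edges of the complete graph $K_n$, so that $K_n - E$ is the edgeless graph on $n$ nodes, whose graph associahedron is $\Delta^{n-1}$. Write $\Theta_\emptyset$ for the composite $\Theta_E$ of Definition~\ref{d:bige} (the full edge-deletion, a special case of the extended Tonks projection to a totally disconnected graph), and define $\hat{\Theta}_\emptyset\colon\ssym\to\dsym$ on basis elements by $\hat{\Theta}_\emptyset(F_u)=F_{\Theta_\emptyset(u)}$. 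Surjectivity is immediate from Lemma~\ref{l:newtonks} (the generalized Tonks projection is surjective). So the crux is to show $\hat{\Theta}_\emptyset$ is multiplicative, after which associativity of $\dsym$ follows formally: for basis elements of $\dsym$, lift each to any $\Theta_\emptyset$-preimage in $\ssym$, multiply there, and apply $\hat{\Theta}_\emptyset$; the homomorphism property makes this independent of the lifts, and transports associativity.

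To show $\hat{\Theta}_\emptyset$ is multiplicative I would mimic the proof of Theorem~\ref{t:permhom} verbatim. Fix $u\in S_p$, $v\in S_q$; both $\hat{\Theta}_\emptyset(F_u\cdot F_v)$ and $\hat{\Theta}_\emptyset(F_u)\cdot\hat{\Theta}_\emptyset(F_v)$ are sums of $\binom{p+q}{p}$ terms indexed by shuffles $\iota\in S^{(p,q)}$, and it suffices to match the $\iota$-th terms. Using the identity $\hat{\rho}_\iota(\eta_\iota(\Theta_\iota(u)),v)=\overline{\iota(u)}\cup\overline{\hat{\iota}(v)}$ on the $\ssym$ side and the identity $\hat{\rho}_\iota(\eta(u),v)=\overline{\iota(u)}\cup\overline{\hat{\iota}(v)}$ on the $\dsym$ side (the latter noted just before Definition~\ref{d:sim_prod}), the claim reduces to
\begin{equation*}
\Theta_\emptyset\bigl(\overline{\iota(u)}\cup\overline{\hat{\iota}(v)}\bigr)=\overline{\iota(\Theta_\emptyset(u))}\cup\overline{\hat{\iota}(\Theta_\emptyset(v))},
\end{equation*}
which I would verify tube-by-tube in the same two cases (tubes coming from $u$, tubes coming from $v$) as in Theorem~\ref{t:permhom}, invoking Lemma~\ref{l:comm} to delete first the edges lying inside the subgraph induced by $\iota([p])$ and then the rest, so that the left side is reassembled from the right. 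Here everything simplifies because $\Theta_\emptyset$ shatters every tube into singletons: $\Theta_\emptyset(u)$ is the $n$-tubing of the edgeless graph consisting of all singletons (plus the universal tube), so both sides reduce to the same set of singleton tubes on the $(p+q)$-node edgeless graph, namely the singletons of $\iota([p])$ together with the singletons $\{\hat{\iota}(i)\}$.

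I do not expect a serious obstacle, since this is a routine adaptation of arguments already in the paper; the only thing requiring care is the bookkeeping that $\hat{\rho}_\iota$ in Definition~\ref{d:sim_prod} is the degenerate facet inclusion of Corollary~\ref{c:mfacet} in which the reconnected complement of the edgeless graph with respect to any node set is again edgeless (so no tube of $v$ gets expanded — contrast the cyclohedron, where tubes of $v$ \emph{do} expand), which is exactly why $\eta$ rather than $\eta_\iota$ appears and why the simplification above goes through cleanly. If one prefers a self-contained argument, associativity can alternatively be checked directly: a term of $(F_u\cdot F_v)\cdot F_w$ is indexed by a choice of an ordered three-block set-partition of $[p+q+r]$ into blocks of sizes $p,q,r$ (equivalently an injection placing $[p]$, then $[q]$, then $[r]$), the resulting tubing has as non-universal tubes exactly the singletons of the first two blocks plus the $\hat{\iota}$-images of the non-universal tubes of $w$, and this description is visibly symmetric in how one associates the three shuffles — giving the same multiset of terms either way. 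I would present the homomorphism proof as the main line and mention the direct check only in passing.
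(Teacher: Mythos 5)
Your strategy is exactly the paper's: the paper likewise verifies directly that each term is a valid $(p+q)$-tubing and then derives associativity from a surjective algebra homomorphism $\ssym\to\dsym$ obtained by deleting every edge of the complete graph (Lemma~\ref{l:next}, where the map is denoted $\hat{\Theta}_{\delta}$), proved by matching the $\iota$-th terms via the very identity you display. Your optional direct check (a multiple product depends only on the final operand and the block structure) is also the observation the paper itself invokes later for $\tilde{\dsym}$ in Theorem~\ref{t:hopf}.

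There is, however, one concrete error at the only point where the verification has content. You assert that $\Theta_\emptyset(u)$ is ``the $n$-tubing consisting of all singletons (plus the universal tube),'' and that both sides of the key equation reduce to the full set of singletons of $[p+q]$. That collection is not a legal tubing of the edgeless graph: a tubing of a disconnected graph may not contain all of its component tubes, and indeed the all-singletons configuration is exactly the null face $\emptyset_n$ that the paper only adjoins later when building $\tilde{\dsym}$. For $u\in S_n$ the image $\Theta_n(u)$ consists of the $n-1$ singletons \emph{other than} $\{u^{-1}(n)\}$ together with the universal tube; which singleton is omitted is precisely the data that survives the projection and distinguishes the $n$ vertices of $\Delta^{n-1}$. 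The real content of the check is therefore that both sides omit the \emph{same} singleton, namely $\{\hat{\iota}(v^{-1}(q))\}$: on the left because that is the unique node of the tubing for $(u\times v)\cdot\iota^{-1}$ lying in no non-universal tube, and on the right because $\Theta_q(v)$ omits $\{v^{-1}(q)\}$ and the product then omits its image under $\hat{\iota}$. Your own description of the direct associativity check (``the singletons of the first two blocks plus the $\hat{\iota}$-images of the non-universal tubes of $w$'') already implicitly gets this right; with the same care applied to the homomorphism computation, your argument coincides with the paper's.
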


\begin{proof}
First we must check that the result of the product is indeed a sum of valid $(p+q)$-tubings of the
 edgeless graph on $p+q$ nodes.
This is clearly true, since there will always be only one node that is not a tube in any given term of the
product.
 Associativity is shown by the
existence of an algebra homomorphism $\ssym\to \dsym$
 which we will see next in Lemma~\ref{l:next}.
\end{proof}

\begin{defi}
Let $E_n = $ the set of edges of the complete graph on $n$ nodes. Deleting these gives a projection
$\Theta_n$ from the permutohedra to the simplices, as in Definition~\ref{d:bige}.  Then we define
$\hat{\Theta}_{\delta}:\ssym\to \dsym$ by
 $$\hat{\Theta}_{\delta}(F_u) = F_{\Theta_{n}(u)}$$ for $u$ an $n$-tubing of $K_n.$
\end{defi}

\begin{lem}\label{l:next}
The map $\hat{\Theta}_{\delta}$ is an algebra homomorphism.
\end{lem}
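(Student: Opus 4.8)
The plan is to mimic the proof of Theorem~\ref{t:permhom} almost verbatim, exploiting the fact that $\hat{\Theta}_\delta$ is induced by a composition of edge-deletion projections $\Theta_{E_n}$ on the complete graph. For $u\in S_p$ and $v\in S_q$ I would compare $\hat{\Theta}_\delta(F_u\cdot F_v)$ with $\hat{\Theta}_\delta(F_u)\cdot\hat{\Theta}_\delta(F_v)$ term by term over the shuffles $\iota\in S^{(p,q)}$. Since both products have ${p+q\choose p}$ terms indexed the same way, it suffices to verify, for each $\iota$, the identity
$$
\Theta_{p+q}\bigl(\hat{\rho}_\iota(\eta_\iota(\Theta_\iota(u)),v)\bigr) = \hat{\rho}_\iota\bigl(\eta(\Theta_p(u)),\,\Theta_q(v)\bigr),
$$
where the right-hand side is the $\iota$-term of the product in $\dsym$ as in Definition~\ref{d:sim_prod}, and where on the left the inner operations reduce (since the graphs in question become edgeless anyway) to those of $\wsym$/$\ssym$. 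Equivalently, writing $\overline{\iota(u)}\cup\overline{\hat\iota(v)}$ for the $\iota$-term of the $\ssym$-product as in the proof of Theorem~\ref{r:geo}, the claim is
$$
\Theta_{p+q}\bigl(\overline{\iota(u)}\cup\overline{\hat\iota(v)}\bigr) = \overline{\iota(\Theta_p(u))}\cup\overline{\hat\iota(\Theta_q(v))},
$$
with the understanding that once all edges are deleted the bars denote the simple node-image operation of Definition~\ref{d:sim_prod}.

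The verification then splits into the same two cases as in Theorem~\ref{t:permhom}: tubes coming from $u$ and tubes coming from $v$. For a tube $t$ of $u$, the right side first deletes all edges of $K_p$ (so $\Theta_p(u)$ is the tubing consisting of singletons $\{j\}$ for $j\in t$, i.e.\ the vertex of $\Delta^{p-1}$), then applies $\iota$ to get singletons $\{\iota(j)\}$; the left side first sends $t$ to the tube $\iota(t)$ of $K_{p+q}$ and then, using Lemma~\ref{l:comm} to delete edges in any order, deletes first the edges inside the subgraph induced by $\iota([p])$ (breaking $\iota(t)$ into the singletons $\{\iota(j)\}$) and then the rest (which do nothing new). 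Both sides yield the same set of singleton tubes. For a tube $s$ of $v$, the right side deletes all edges of $K_q$ (so $s\mapsto$ singletons $\{j\}$, $j\in s$), applies $\hat\iota$, and — since the ambient reconnected complement is edgeless — does no expansion, giving $\{\hat\iota(j)\}$; the left side sends $s$ to $\hat\iota(s)\cup\iota([p])$, then deletes all edges of $K_{p+q}$, again reducing to the singletons $\{\hat\iota(j)\}$ for $j\in s$ (the nodes $\iota([p])$ that were appended contribute only the singletons already accounted for by $\overline{\iota(\Theta_p(u))}$). Thus the two sides agree term by term, so $\hat{\Theta}_\delta(F_u\cdot F_v)=\hat{\Theta}_\delta(F_u)\cdot\hat{\Theta}_\delta(F_v)$.

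I do not expect a serious obstacle here; the one point requiring a little care is the bookkeeping at the boundary between the two sides — namely that the "expanded" tubes $\overline{\hat\iota(v)}$ on the left really do collapse, after $\Theta_{p+q}$, exactly onto the unexpanded images $\{\hat\iota(j)\}$ without either losing the universal tube or double-counting nodes in $\iota([p])$. This is where I would invoke the remark that the reconnected complement of an edgeless graph is edgeless, so no adjacency survives and no expansion is possible in the target; combined with the commutativity of edge deletions (Lemma~\ref{l:comm}), the collapse is forced. Surjectivity of $\hat{\Theta}_\delta$, should it be wanted, follows as before from the surjectivity of the generalized Tonks projection (Lemma~\ref{l:newtonks}), but strictly speaking the lemma as stated only asserts the homomorphism property, which the case analysis above establishes.
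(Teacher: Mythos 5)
Your proposal is correct and follows essentially the same route as the paper: both reduce the claim to the term-by-term identity $\Theta_{p+q}(\overline{\iota(u)}\cup\overline{\hat\iota(v)}) = \overline{\iota(\Theta_p(u))}\cup\overline{\hat\iota(\Theta_q(v))}$ over shuffles, in the pattern of Theorem~\ref{t:permhom}. The only cosmetic difference is that the paper shortcuts your two-case tube bookkeeping by observing that the simplex projection remembers nothing but the single uncovered node, which on both sides is $\hat\iota(v^{-1}(q))$.
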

\begin{proof}
The proof follows precisely the same arguments as the proof for Theorem~\ref{t:permhom}, except that the
cases are simpler. We consider $\hat{\Theta}_{\delta}(F_u\cdot F_v)$ for $u\in S_p$ and $v\in S_q$. We
compare the result with $\hat{\Theta}_{\delta}(F_u)\cdot \hat{\Theta}_E(F_v).$ Each of the multiplications
results in a sum of ${p+q \choose p}$ terms. It turns out that comparing the results of the two operations
can be done piecewise. Thus we check that for a given shuffle $\iota$ the respective terms of our two
operations agree: we claim that
$$\Theta_{p+q}(\overline{\iota(u)} \cup  \overline{\hat{\iota}(v)}) = \overline{\iota(\Theta_p(u))} \cup  \overline{\hat{\iota}(\Theta_q(v))}.$$

On the left-hand side, $\Theta_{p+q}$ forgets all the information in the tubing it is applied to; except for
the following data: which of the nodes is the only node in the universal tube and not in any other tube. This
particular node is actually the node numbered by $j = \hat{\iota}(v^{-1}(q)).$ The effect of $\Theta_{p+q}$
is to create the
 $n$-tubing of the edgeless
graph with node $j$ as the only node that is not a tube.

 On the
right hand side $\Theta_p$ and $\Theta_q$  forget all but the value of $u^{-1}(p)$ and $v^{-1}(q)$
respectively. Then we create the tubing of the edgeless graph by first including all the nodes numbered by
$\iota([p])$ and then all the nodes except node $j=\hat{\iota}(v^{-1}(q)).$
\end{proof}
\begin{rema}
There are clearly algebra homomorphisms $\ysym\to\dsym$ and $\wsym\to\dsym$ described by the extended Tonks
projections from associahedra and cyclohedra to the simplices.
\end{rema}

\subsection{Formula for the product structure}
There is a simple bijection from $n$-tubings of the edgeless graph on $n$ nodes to standard basis elements of
$\Q^n.$ Let $e_m^n$ be the column vector of $\Q^n$ with all zero entries except for a 1 in the $m^{th}$
position. Associate $e_j^n = e_u$ with the $n$-tubing $u$ whose nodes are all tubes except for the $j^{th}$
node. Then use the product of $\dsym$ to define a product of two standard basis vectors of varying dimension:
$e_u\cdot e_v $ is the sum of all $e_w$ for $F_w$ a term in the product $F_u\cdot F_v.$ Then:
\begin{thm}
$$
e_j^p\cdot e_l^q = \sum_{i=l}^{p+l}{i-1 \choose l-1}{p+q-i \choose q-l}e_i^{p+q}
$$
\end{thm}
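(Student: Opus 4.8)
The plan is to unwind the combinatorial definition of the product in $\dsym$ from Definition~\ref{d:sim_prod} and count, for fixed source basis vectors $e_j^p$ and $e_l^q$, how many shuffles $\iota \in S^{(p,q)}$ produce each target basis vector $e_i^{p+q}$. By the analysis in Lemma~\ref{l:next}, the term of $F_u \cdot F_v$ indexed by a shuffle $\iota$ is the $(p+q)$-tubing of the edgeless graph whose unique non-tube node is $\hat{\iota}(v^{-1}(q)) = \iota(p + v^{-1}(q))$; equivalently, translating through the bijection $e_u = e_{u^{-1}(p)}^p$ (so that $u$ corresponds to $e_j^p$ means $u^{-1}(p) = j$, and likewise $v^{-1}(q) = l$), the term indexed by $\iota$ equals $e_{\iota(p+l)}^{p+q}$. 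Thus the coefficient of $e_i^{p+q}$ in $e_j^p \cdot e_l^q$ is exactly the number of $(p,q)$-shuffles $\iota$ with $\iota(p+l) = i$. Note the coefficient depends on $l$ but not on $j$, which is consistent with the right-hand side of the claimed formula.

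So the core task is purely enumerative: count $(p,q)$-shuffles $\iota$ sending position $p+l$ to value $i$. First I would recall that a $(p,q)$-shuffle is determined by the choice of the $p$-element set $\iota([p]) \subseteq [p+q]$, with $\iota$ increasing on $[p]$ and increasing on $\{p+1,\dots,p+q\}$. Fixing $\iota(p+l) = i$ means position $p+l$ lies in the "second block" $\{p+1,\dots,p+q\}$, so I need: among the values $\{1,\dots,i-1\}$, exactly $l-1$ of them go to the second block (positions $p+1,\dots,p+l-1$) and the remaining $i-1-(l-1) = i-l$ go to the first block; and among the values $\{i+1,\dots,p+q\}$, exactly $q-l$ go to the second block (positions $p+l+1,\dots,p+q$) and the remaining $(p+q-i) - (q-l) = p-i+l$ go to the first block. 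Because increasingness on each block means the assignment within a block is forced once the value-set is chosen, the count is $\binom{i-1}{l-1}\binom{p+q-i}{q-l}$. Summing over the feasible range of $i$ — which is $l \le i \le p+l$ (so that $i-l \ge 0$ values fit in the first block below $i$, and $p-i+l \ge 0$ fit above) — yields exactly
$$
e_j^p \cdot e_l^q = \sum_{i=l}^{p+l} \binom{i-1}{l-1}\binom{p+q-i}{q-l} e_i^{p+q},
$$
as claimed. (As a sanity check, the total number of shuffles is $\sum_i \binom{i-1}{l-1}\binom{p+q-i}{q-l} = \binom{p+q}{p}$ by Vandermonde, matching $|S^{(p,q)}|$.)

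The main obstacle — really the only subtle point — is pinning down which node becomes the unique non-tube node of the $\iota$-term, i.e. justifying carefully that it is $\iota(p + v^{-1}(q))$ and correctly tracking the two identifications: the bijection between $n$-tubings of the edgeless graph and permutations (via the complete-graph picture and Devadoss's bijection, where node $k$ being the unique non-tube corresponds to $u(k) = n$, hence $u^{-1}(n) = k$), and the convention $\hat{\iota}(i) = \iota(i+p)$. Once those indices are aligned, the argument is a direct application of Lemma~\ref{l:next} followed by the elementary shuffle-counting above; I would be careful to double-check the boundary cases $i = l$ and $i = p+l$ and the degenerate cases $l=0$ or $l=q$ (where one binomial factor collapses), and to verify the index shift $e_u = e_{u^{-1}(p)}^p$ against the definition given just before the theorem statement.
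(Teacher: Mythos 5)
Your proposal is correct and follows essentially the same route as the paper: identify the $\iota$-term of the product as the edgeless-graph tubing whose unique non-tube node is $\hat{\iota}(l)$, then count the shuffles with $\hat{\iota}(l)=i$ by choosing which of the values below $i$ and above $i$ are hit by $\hat{\iota}$, giving ${i-1 \choose l-1}{p+q-i \choose q-l}$ with $i$ ranging from $l$ to $p+l$. Your added care with the index identifications and the Vandermonde sanity check are fine but do not change the argument.
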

\begin{proof}
Let $e_l^q = e_v$ for the associated tubing $v.$  The only node not a tube of $v$ is node $l.$ We need only
keep track of where $l$ lands under $\hat{\iota}:[q]\to[p+q],$ where $\hat{\iota}$ is as in
Definition~\ref{d:sim_prod}.  The only possible images of $l$ are from $l$ to $p+l,$ thus the limits of the
summation. When $\hat{\iota}(l) = i$ there are several ways this could have occurred. $\hat{\iota}$ must have
mapped  $[l-1]$ to $[i-1]$ and the set $\{l+1,\dots,q\}$ to $\{i+1,\dots,p+q\}.$ The ways this can be done
are enumerated by the combinations in the sum.
 \end{proof}
 \begin{exam}
Consider the example product performed in Figure~\ref{f:sim}. Here the formula gives the observed quantities:
$$
e_2^2\cdot e_2^3 = 3e_2^5 + 4e_3^5 +3e_4^5.
$$
 \end{exam}

\subsection{Faces of the simplex}
 Note that a product of tubings on the edgeless graph
(with any number of tubes) is easily defined by direct analogy. Now we present a Hopf algebra with one-sided
unit based upon the face posets of simplices. Tubings on the edgeless graphs label all the faces of the
simplices. The number of faces of the $n$-simplex, including the null face and the $n$-dimensional face, is
$2^n.$ By adjoining the null face here we thus have a graded bialgebra with $n^{th}$ component of dimension
$2^n.$ It would be of interest to compare this with other algebras of similar dimension, such as $\qsym$.

\begin{defi}
Let $\tilde{\Delta Sym}$ be the graded vector space over $\Q$ with the $n^{th}$ component of its basis given
by the tubings on the edgeless graph of $n$ numbered nodes, with one extra basis element included in each component: corresponding to
the null facet $\emptyset_n$ is the collection consisting
 of all $n$ of the singleton tubes and the universal tube. By ${\mathcal D}_n$ we denote the set of
$n$-tubings  on the edgeless graph, together with $\emptyset_n$. We write $F_{u}$ for the basis element
corresponding to $u\in {\mathcal D}_n$ and 1 for the basis element of degree 0.
\end{defi}

Now we define the product and coproduct, with careful description of the units.

Let $u \in {\mathcal D}_p$ and $v\in {\mathcal D}_q.$
 For a given shuffle $\iota$ our term of $F_u\cdot F_v$ will be indexed by an element of ${\mathcal D}_{p+q}$ which
 will include all the nodes numbered by
$\iota([p]).$ In addition we include all the tubes $\hat{\iota}(t)$ for non-universal $t\in v,$ and the
universal tube. Thus the product is an extension of Definition~\ref{d:sim_prod}, with a redefined right
multiplication by the unit:

\begin{defi}\label{d:sim_prod2}
\begin{large}
$$
F_u\cdot F_v = \sum_{\iota \in S^{(p,q)}} F_{\hat{\rho}_{\iota}(\eta(u),v)} ;~~  1\cdot F_u = F_u ;~~
F_u\cdot 1 = F_{\emptyset_p}.
$$
\end{large}
\end{defi}
An example is shown in Figure~\ref{f:sim2}.
\begin{figure}[h]
                  \centering
                  \includegraphics[width=\textwidth]{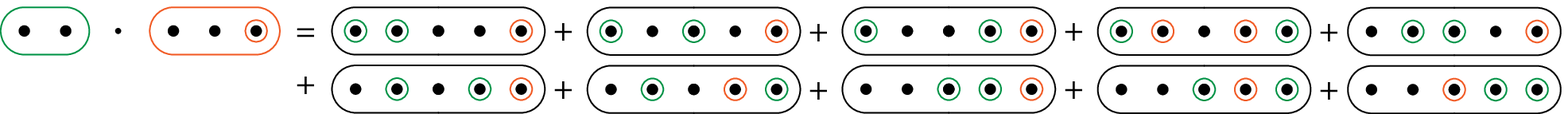}
                  \caption{The product in $\tilde{\dsym.}$}\label{f:sim2}
  \end{figure}

The coproduct is defined simply by restricting an element of ${\mathcal D}_n$ to
 its subgraphs induced by the nodes $1,\dots, i$ and $i+1,\dots n.$
 Given a tubing $u$ of the
  edgeless  graph on $n$ vertices we can find tubings $u_i$ and $u_{n-i}$ as follows: for each tube $t\in u$ we find the intersections of $t$ with
  the two sub-graphs (also edgeless) induced by
the nodes $1,\dots, i$ and $i+1,\dots, n$ respectively.

\begin{large}
$$
\Delta (F_u) = \sum_{i=0}^{n}F_{u_i}\otimes F_{u_{n-i}}.
$$
\end{large} An example is shown in Figure~\ref{f:sim_cop2}.
\begin{figure}[h]
                  \centering
                  \includegraphics[width=\textwidth]{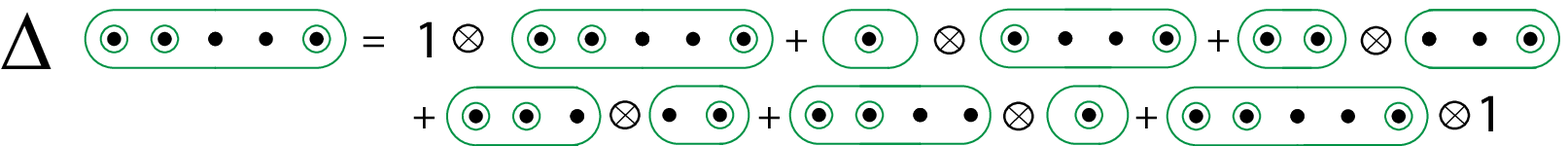}
                  \caption{The coproduct in $\tilde{\dsym.}$}\label{f:sim_cop2}
  \end{figure}

\begin{thm}\label{t:hopf}
The product and coproduct just defined form a graded bialgebra
structure on $\tilde{\Delta Sym}$, and therefore a (one-sided) Hopf
algebra.
\end{thm}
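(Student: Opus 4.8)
The plan is to verify the axioms in order — associativity of the product, coassociativity and counitality of the coproduct, and multiplicativity of the coproduct — and then note that a connected graded bialgebra (here one whose unit is only one-sided) automatically carries a one-sided antipode. For associativity, observe first that on all pairs $(u,v)\in{\mathcal D}_p\times{\mathcal D}_q$ the product of Definition~\ref{d:sim_prod2} is the single uniform formula $\sum_{\iota}F_{\hat{\rho}_{\iota}(\eta(u),v)}$: the clauses $1\cdot F_u=F_u$ and $F_u\cdot 1=F_{\emptyset_p}$ are exactly what that formula returns when one factor is the degree-$0$ element, because $\eta(u)$ always lands on a point and the only non-universal tubes produced are the $\hat{\iota}$-images of the tubes of the other factor. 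Hence associativity follows as in Theorems~\ref{t:sim} and~\ref{t:wface}: on genuine tubings one lifts along the extended Tonks projection of Definition~\ref{d:bige} to Chapoton's permutohedral face algebra and multiplies there, while the finitely many products involving $1$ or a null face $\emptyset_n$ are settled by the identities $F_{\emptyset_p}\cdot F_v=F_u\cdot F_v$ for any $u$ with $\deg u=p$ and $F_u\cdot F_{\emptyset_q}=\binom{p+q}{p}F_{\emptyset_{p+q}}$.

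For coassociativity and the counit, write $u_{[a,b]}$ for the restriction of a tubing of the edgeless graph to the nodes $a,\dots,b$ (reindexed). Transitivity of restriction gives $(\Delta\otimes\mathrm{id})\Delta F_u=(\mathrm{id}\otimes\Delta)\Delta F_u=\sum_{0\le i\le j\le n}F_{u_{[1,i]}}\otimes F_{u_{[i+1,j]}}\otimes F_{u_{[j+1,n]}}$, and $(\emptyset_n)_{[a,b]}=\emptyset_{b-a+1}$ shows the null-face elements cause no trouble. Taking $\varepsilon$ to be projection onto the degree-$0$ component, the counit axioms hold since $u_{[1,0]}=1$ and $u_{[1,n]}=u$, and $\varepsilon$ is multiplicative because a product of two basis elements has degree the sum of the degrees.

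The main point is the compatibility $\Delta(F_u\cdot F_v)=\Delta(F_u)\cdot\Delta(F_v)$, which I would prove via the bijection sending a pair (a shuffle $\iota\in S^{(p,q)}$ together with a cut position $m\in\{0,\dots,p+q\}$) to the quadruple $(i,j,\iota_1,\iota_2)$ with $i+j=m$, $\iota_1\in S^{(i,j)}$ and $\iota_2\in S^{(p-i,q-j)}$ obtained by splitting the increasing block $\iota([p])$ at $m$; this works precisely because a shuffle is order-preserving on each of its two blocks. Restricting the term $\hat{\rho}_{\iota}(\eta(u),v)$ of $F_u\cdot F_v$ to the first $m$ nodes then equals $\hat{\rho}_{\iota_1}(\eta(u_{[1,i]}),v_{[1,j]})$, a term of $F_{u_{[1,i]}}\cdot F_{v_{[1,j]}}$, and likewise on the complementary block, so the double sum over $(\iota,m)$ matches $\sum_{i,j}\bigl(F_{u_{[1,i]}}\cdot F_{v_{[1,j]}}\bigr)\otimes\bigl(F_{u_{[i+1,p]}}\cdot F_{v_{[j+1,q]}}\bigr)$ term by term. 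The one-sided unit is forced to be consistent: $\Delta(F_u\cdot 1)=\Delta F_{\emptyset_p}=\sum_i F_{\emptyset_i}\otimes F_{\emptyset_{p-i}}$ coincides with $\Delta(F_u)\cdot(1\otimes 1)=\sum_i\bigl(F_{u_{[1,i]}}\cdot 1\bigr)\otimes\bigl(F_{u_{[i+1,p]}}\cdot 1\bigr)$, which is exactly what dictates the value $F_{\emptyset_p}$.

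Finally, since $\tilde{\dsym}$ is connected graded (its degree-$0$ component is $\Q\cdot 1$), the usual recursion for the antipode of a connected graded bialgebra terminates degreewise and yields a map $S$ that is a one-sided convolution inverse of the identity; this is the sense in which $\tilde{\dsym}$ is a one-sided Hopf algebra. I expect the real obstacle to be clerical rather than conceptual: making the (shuffle, cut)$\leftrightarrow$(pair of sub-shuffles) correspondence precise tube by tube, and verifying uniformly that no step secretly uses a classical bialgebra identity in a form that would require a two-sided unit.
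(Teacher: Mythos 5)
Your proposal is correct and takes essentially the same route as the paper: the central step, the bijection matching a (shuffle, cut-position) pair on the left of $\Delta(F_u\cdot F_v)=\Delta(F_u)\cdot\Delta(F_v)$ with a cut of $u$, a cut of $v$, and a pair of sub-shuffles on the right, is exactly the paper's argument, and your coassociativity argument (transitivity of restriction to consecutive blocks) also coincides. The only cosmetic difference is that for associativity the paper invokes the even more direct observation that a product of basis elements depends only on the final operand (and the degrees of the others), whereas you route through the lift to Chapoton's algebra plus explicit unit/null-face identities; the extra details you supply on the counit and the degreewise antipode recursion are points the paper leaves implicit.
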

\begin{proof}
Associativity of the product is due to the observation that the result of multiplying a series of basis elements
only depends on the final operand. Coassociativity of the coproduct follows from the fact that terms of both
$(1\otimes \Delta)\Delta F_u$ and $(\Delta\otimes 1)\Delta F_u$ involve simply splitting the edgeless graph
into three consecutive pieces, and restricting $u$ to those pieces. Now we demonstrate that the product and
coproduct commute; that is, $\Delta(F_u\cdot F_v) = (\Delta F_u) \cdot (\Delta F_v).$ We describe a bijection
between the terms on the left-hand and right-hand sides, which turns out to be the identity map.
 Let $u \in {\mathcal D}_p$ and $v\in {\mathcal D}_q.$
  A term of the left-hand side depends first upon a $(p,q)$-shuffle $\iota$ to choose $p$ of the nodes of
  the $(p+q)$-node edgeless graph. Then after forming the
product, a choice of $i \in 0,\dots,p+q$ determines where to split the result into the pieces of the final
tensor product. Let $m= |\{x\in\iota([p]) ~:~ x \le i\}|.$ Thus $m$ counts the number of nodes in the image of $\iota$ which
lie before the split.

To create a term of the right hand side, we first split both $u$ and $v,$ then interchange and multiply the
resulting four terms as prescribed in the definition of the tensor product of algebras.
 Our matching term on the right-hand
side is the one formed by first splitting $u$ after node $m,$ and $v$ after node $i-m.$ Then
 in the first multiplication choose the $(m,i-m)$-shuffle $\sigma(x) =\iota(x)$, and in the
second use the $(q-m,q-i+m)$-shuffle  $\sigma'(x) = \iota(x+m)-i.$ These choices define an identity map from the set of terms of the left hand side to
those on the right.
\end{proof}
\begin{rema} $\tilde{\dsym}$ is closely related to the free associative \emph{trialgebra} on one variable described by Loday and Ronco in
\cite{LR3}. In \cite[Proposition 1.9]{LR3} the authors describe the products for that trialgebra.
Axiomatically, the first two products automatically form a \emph{dialgebra}.  The sum of these two products
appears as two of the terms of our shuffle-based product! We leave it to an interested reader to uncover the
precise relationship, perhaps duality, between the two structures.

\end{rema}

%
%
\bibliography{mybib}{}
\bibliographystyle{amsplain}

\end{document}